\author{Roland Donninger}
\address{Universität Wien, Fakultät für Mathematik,
  Oskar-Morgenstern-Platz 1, 1090 Vienna, Austria}
\email{roland.donninger@univie.ac.at}
\author{David Wallauch}
\address{Universität Wien, Fakultät für Mathematik,
  Oskar-Morgenstern-Platz 1, 1090 Vienna, Austria}
\email{david.wallauch@univie.ac.at}
\thanks{Both authors are supported by the Austrian Science Fund FWF, Project P 30076: “Self-similar blowup in dispersive
	wave equations”.}
\title{Blowup behavior of strongly perturbed wave equations}
\numberwithin{equation}{section}
\newcommand{\C}{\mathbb{C}}
\newcommand{\fpc}{\cos (t |\nabla | )}
\newcommand{\fps}{\frac{\sin(t| \nabla | )}{|\nabla |}}
\newcommand{\fpsi}{\frac{\sin((t-s)| \nabla | )}{|\nabla |}}
\newcommand{\R}{\mathbb{R}}
\newtheorem{thm}{Theorem}[section]
\newtheorem{defi}{Definition}[section]
\newtheorem{asm}{Assumption}[section]
\newtheorem{prop}{Proposition}[section]
\newtheorem{lem}{Lemma}[section]
\newcommand{\N}{\textup{\textbf{N}}}
\newcommand{\rg}{\textup{\textbf{rg}}}
\newcommand{\Span}{\textup{\textbf{span}}}
\newcommand{\Po}{\textup{\textbf{P}}_0}
\newcommand{\Pro}{\textup{\textbf{P}}}
\newcommand{\Qo}{\textup{\textbf{Q}}_0}
\newcommand{\Cf}{\textup{\textbf{C}}}
\newcommand{\Qr}{\textup{\textbf{Q}}}
\newcommand{\X}{\mathcal{X}}
\newcommand{\Mf}{\textup{\textbf{M}}}
\newcommand{\Nf}{\textup{\textbf{N}}}
\newcommand{\Lf}{\textup{\textbf{L}}}
\newcommand{\Vf}{\textup{\textbf{V}}}
\newcommand{\Jf}{\textup{\textbf{J}}}
\newcommand{\G}{\textup{\textbf{G}}}
\newcommand{\K}{\textup{\textbf{K}}}
\newcommand{\Sf}{\textup{\textbf{S}}}
\newcommand{\I}{\textup{\textbf{I}}}
\newcommand{\Uf}{\textup{\textbf{U}}}
\newcommand{\uf}{\textup{\textbf{u}}}
\newcommand{\vf}{\textup{\textbf{v}}}
\newcommand{\Rf}{\textup{\textbf{R}}}
\newcommand{\Wf}{\textup{\textbf{W}}}
\newcommand{\gf}{\textup{\textbf{g}}}
\newcommand{\rf}{\textup{\textbf{r}}}
\newcommand{\ff}{\textup{\textbf{f}}}
\newcommand{\Pf}{\textup{\textbf{P}}}
\newcommand{\Qf}{\textup{\textbf{Q}}}
\renewcommand{\Re}{\textup{Re}}
\renewcommand{\Im}{\textup{Im}}
\newcommand{\B}{\mathbb{B}}
\begin{document}
\maketitle
\begin{abstract}
We study the blowup behavior of a class of strongly perturbed wave
equations with a focusing supercritical power nonlinearity in three
spatial dimensions. We show that the ODE blowup profile of the
unperturbed equation still describes the asymptotics of stable
blowup. As a consequence, stable ODE-type blowup is seen to be a universal
phenomenon that exists in a large class
of semilinear wave equations. 
\end{abstract}

\section{Introduction}
Nonlinear wave equations describe a wide variety of phenomena in
fields ranging from
fundamental physics to the applied sciences, e.g.~general
relativity, quantum field theory, solid state physics, and nonlinear
optics. Typically, the equations that occur in applications
are way too complicated for a rigorous mathematical
analysis. One therefore resorts to toy models that are supposed to
capture and isolate essential features of the more complicated equations. From the
point of view of applications this strategy is only meaningful if the
phenomena discovered in the toy model are stable under perturbations
of the equation.

In the present paper we focus on the formation of singularities
(or blowup) in finite
time. The basic semilinear wave equation
\[ \Box u(t,x):=(\partial_t^2-\Delta_x)u(t,x)=u(t,x)|u(t,x)|^{p-1}, \quad p>1 \]
admits the explicit \emph{ODE blowup}
given by
\[ u(t,x)=\left (\frac{2(p+1)}{(p-1)^2}\right
)^\frac{1}{p-1}(1-t)^{-\frac{2}{p-1}}, \]
which is known to be stable under perturbations of the initial data
\cite{MerZaa03, MerZaa05, DonSch16}. It is thus natural to ask
whether this type of blowup is also relevant for more complicated
equations that occur in applications. In this paper we show that the
stable ODE blowup persists if one perturbs the equation in a very general
way.
Roughly speaking, we
consider equations of the form
\[ \Box u+F(u,\partial u)=u|u|^{p-1} \]
where $F$ is (at most) linear in the derivatives $\partial u$ and
satisfies some mild, natural requirements. We do not impose any smallness
assumption on $F$. Our result covers the whole range
$p>3$ in $3$ space dimensions and we allow for complex-valued
solutions.
A random example of an equation that we can cover would be
\[ \Box u(t,x)+t^5e^{it+|x|^2} u(t,x)\partial_t
  u(t,x)+u(t,x)^6=u(t,x)|u(t,x)|^6. \]
For the sake of simplicity we restrict ourselves to
radial solutions but the extension to the general case is purely technical.

The mechanism we exploit is most easily explained by considering the
Klein-Gordon equation
\[ \Box u+mu=u|u|^{p-1}. \]
The natural scaling transform related to the pure wave case
($m=0$) is given by
\[ u(t,x)\mapsto u_\lambda(t,x):=\lambda^{-\frac{2}{p-1}}u\left
(\tfrac{t}{\lambda}, \tfrac{x}{\lambda}\right ). \]
Under this scaling, the Klein-Gordon equation transforms as
\[ \Box u_\lambda+\lambda^{\frac{2p}{p-1}}mu_\lambda=u_\lambda|u_\lambda|^{p-1} \]
and if $\lambda\to 0$, the mass term becomes negligible.
The ODE blowup is self-similar and effectively, the
solution moves to
smaller and smaller scales as the blowup time is approached. 
The heuristic scaling analysis therefore suggests that the mass term
(and much more general perturbations) can be neglected
close to the blowup time.
We implement this idea rigorously by a purely perturbative argument.
Consequently, we do not make use of fragile structural
properties like Lyapunov functionals or virial identities. That is why we are able to treat
very general perturbations and all $p>3$. Our result shows, for the
first time in the \emph{supercritical context},
that stable ODE blowup is a \emph{universal phenomenon} that occurs in a large
class of models. 
\subsection{Setup}
Since we restrict ourselves to the radial case, the effective Cauchy
problem we study is given by
\begin{equation}\label{startingeq}
\begin{cases}
&\left(\partial_t^2 -\partial_r^2-\frac{2}{r}\partial_r \right)u(t,r)+ F(t,r,u(t,r),\partial_t u(t,r),\partial_r u(t,r)) =|u(t,r)|^{p-1}u(t,r)\\
&u(T_0,r)=f(r)\\ 
&\partial_0 u(T_0,r)=g(r),
\end{cases}
\end{equation}
where $T_0$ is some initial time which we will specify
below. Furthermore, $p$ is a constant that satisfies $3<p$. We
additionally assume that $f$ and $g$ are complex-valued initial data
and that $F$ satisfies some natural constraints.
Note that in the unperturbed case $(F=0)$, Eq.~\eqref{startingeq} has a conserved energy given by 
\begin{equation*}
\mathcal{E}(u(t,.),\partial_t u(t,.))=\frac{1}{2}\|(u(t,.),\partial_t u(t,.)\|^2_{(\dot{H}^1\times L^2)(\R^3)}-\frac{1}{p-1}\|u(t,.)\|^{p+1}_{L^{p+1}(\R^3)}.
\end{equation*}
Under the transformation
$$
u(t,r)\mapsto u_\lambda(t,r):=\lambda^{-\frac{2}{p-1}}u\left(\frac{t}{\lambda},\frac{r}{\lambda}\right)
$$
this energy scales as 
$$
\mathcal{E}(u_\lambda(t,.),\partial_t u_\lambda(t,.))=\lambda^{\frac{p-5}{p-1}}\mathcal{E}(u(t,.),\partial_t u(t,.)),
$$
while Eq.~\eqref{startingeq}
remains invariant if it is unperturbed.
Therefore, we say that Eq.~\eqref{startingeq} is subcritical for $1<p<5$, critical for $p=5$, and supercritical for $p>5$.\\
We also remark that by employing the wave propagators $\fpc$ and $\fps$, a weak formulation of Eq.~\eqref{startingeq} is given by
\begin{align*}
u(t,r)=&\fpc f+ \fps g +\int_0^t \fpsi\mathcal{N}(u)(s,r) ds,
\end{align*}
where 
$$\mathcal{N}(u)(s,r) =(|u(s,r)|^{p-1}u(s,r)-F(s,r,u(s,r),\partial_s u(s,r),\partial_r u(s,r))).$$
This weak formulation now has the advantage that instead of having to deal with the differential equation itself, one obtains a fixed point problem. In order to be able to find a fixed point, it is vital to work in a space with enough regularity to control the (possibly) supercritical nonlinearity. In our case $H^2 \times H^1$ will be sufficient. Recall that for $F=0$, Eq.~\eqref{startingeq} has an explicit blow up solution given by
\begin{equation}
u^T(t,r) =\kappa_p (T-t)^{-\frac{2}{p-1}}
\end{equation}
where
$$
\kappa_p=\left(\frac{2(p+1)}{(p-1)^2}\right)^{\frac{1}{p-1}}.
$$
For notational convenience we further set $c_p=\kappa_p^{p-1}$.
Note that since we allow for complex-valued solutions, the phase shift $u\mapsto e^{i\theta} u$, $\theta\in \R$, is another symmetry that leaves the unperturbed equation invariant.
This leads to a two parameter family of blowup solutions given by$$
u^T_{\theta}:= e^{i\theta}u^T.$$
Our interest in complex-valued solutions stems from the fact that, as
a special case of Eq.~\eqref{startingeq}, we obtain a semilinear
Klein-Gordon equation.

Now we turn or attention to the perturbation term
$F(t,r,u(t,r),\partial_t u(t,r),\partial_r u(t,r))$.
 First of all, we assume $F$ to be of the form
 \begin{equation}
   \label{eq:formF}
F(t,r,u,v,w)= A(t,r,u)+B(t,r,u)v+C(t,r,u)w,
\end{equation}
where $B$ and $C$ satisfy
\begin{equation}
  \label{eq:estBC}
  \begin{split}
|B(t,r,u)|+|C(t,r,u)|&\leq M(1+ |u|)
\\
|B(t,r,u_1)-B(t,r,u_2)|+|C(t,r,u_1)-C(t,r,u_2)|&\leq M |u_1-u_2|,
\end{split}
\end{equation}
for some $M>0$.
Next, $F$ needs to grow slowlier in $u$ than the leading nonlinearity itself. Concretely, there needs to be a constant $1\leq q<p$ such that $A$ satisfies
\begin{equation}
  \begin{split}
\label{estimateV1}
|A(t,r,u)|&\leq M(1+|u|^{q})
\\
|A(t,r,u_1)-A(t,r,u_2)|
&\leq M\left|u_1|u_1|^{q-1}-u_2|u_2|^{q-1}\right|.
\end{split}
\end{equation}
Since we will have to control $F$ in $H^1$, these constraints alone do
not suffice and we also have to impose restrictions on the
derivatives. As $u$ is a complex variable, we decompose it according
to $u=x+ iy$ and require $F$ to satisfy the bounds
\begin{equation}
  \label{eq:estderF}
  \begin{split}
|\partial_r F(t,r,u,v,w)|\leq& M(1+|u|^{q}+|v|+|w|)\\
|\partial_{x} F(t,r,x+ iy,v,w)|+|\partial_{y} F(t,r,x + iy,v,w)|\leq&
M(1+|u|^{q-1}+|v|+|w|).
\end{split}
\end{equation}
Finally, we will also need the Lipschitz-type estimates 
\begin{equation}
  \label{eq:estLipF}
  \begin{split}
&|\partial_rF(t,r,u_1,v_1,w_1)-\partial_r F(t,r,u_2,v_2,w_2)|
\\
&\leq M \big( \left|u_1|u_1|^{q-1}-u_2|u_2|^{q-1} \right|
+ |v_1-v_2|+|w_1-w_2|\\
&\;\;\;+|u_1v_1-u_2v_2|+|u_1w_1-u_2w_2|\big)
\\
&|\partial_{x_1}F(t,r,x_1+i y_1,v_1,w_1)- \partial_{x_2}F(t,r,x_2+iy_2,v_2,w_2)|
\\
&\leq M\left(\left| u_1|u_1|^{q-2}-u_2|u_2|^{q-2}\right|+|v_1-v_2|+|w_1-w_2|\right)
\\
&|\partial_{y_1}F(t,r,x_1+i y_1,v_1,w_1)- \partial_{y_2}F(t,r,x_2+iy_2,v_2,w_2)|
\\
&\leq M\left(\left|u_1|u_1|^{q-2}-u_2|u_2|^{q-2}\right|+|v_1-v_2|+|w_1-w_2|\right).
\end{split}
\end{equation}

\begin{asm}
  \label{asm:F}
  There exist constants $t_0, r_0, M>0$ and $q\in [1,p)$ such that
  \[ F: [1-t_0,1+t_0]\times [0,r_0]\times \C\times \C\times \C\to \C \] satisfies
  \eqref{eq:formF},
  \eqref{eq:estBC}, \eqref{estimateV1}, \eqref{eq:estderF}, and
  \eqref{eq:estLipF} for $t\in [1-t_0,1+t_0]$, $r\in [0,r_0]$ and
  \begin{align*}
    u, u_1,
    u_2, v, v_1, v_2, w, w_1, w_2&\in \C \\
    x,x_1,x_2,y,y_1,y_2&\in\R.
                         \end{align*}
\end{asm}
Due to finite speed of propagation it makes sense to study the Cauchy problem for Eq.~\eqref{startingeq} in the backwards lightcone 
$$
\Gamma^{T}_{T_0}:=\{(t,r):t\in(T_0,T),\,r \in [0,T-t]\},
$$
to which we restrict ourselves.
Our precise notion of solutions in the lightcone will be introduced later in Subsection \ref{sgt}. Nevertheless, we can already state the main theorem of this work.
\begin{thm}\label{maintheorem}
	Let $p >3$ and suppose that Assumption \ref{asm:F} holds. Then there exist constants $\delta,
        C,\varepsilon, \omega>0$ and $c\geq 1$ such that if $T_0 \in [1-\frac{3\delta}{c},1-\frac{2\delta}{c}]$ the following holds. Let $(f,g)$ be  initial data that satisfy 
	$$
	\|(f,g)-(u_0^1(T_0,.),\partial_0 u_{0}^1(T_0,.))\|_{H^2\times H^1(\B^3_{R})}<\varepsilon,
	$$
	with $R=\frac{3\delta}{c}+\frac{\delta}{c^2}$.
	Then there exist a $T\in
        [1-\frac{\delta}{c^2},1+\frac{\delta}{c^2}]$, a $C_0>0$, and a $\theta \in (-Cp\delta,Cp\delta)$, such that Eq.~\eqref{startingeq} has a unique solution $u: \Gamma^T_{T_0} \rightarrow \C$ that satisfies
	\begin{align*}
	(T-t)^{\frac{1}{2}+\frac{2}{p-1}}\|\left(u(t,.),\partial_t u(t,.)\right)\|_{\dot{H}^2\times \dot{H}^1(\B^3_{T-t})}&\leq C_0(T-t)^{\omega}\\
		(T-t)^{-\frac{1}{2}+\frac{2}{p-1}}\|(u(t,.),\partial_t u(t,.))-\left(u^T_\theta(t,.),\partial_t u^T_{\theta}(t,.)\right)\|_{\dot{H}^1\times L^2(\B^3_{T-t})}&\leq C_0(T-t)^{\omega}\\
		(T-t)^{-\frac{3}{2}+\frac{2}{p-1}}\|u(t,.)-u^T_\theta(t,.)\|_{L^2(\B^3_{T-t})}&\leq C_0(T-t)^{\omega},
	\end{align*}
	for all $t \in [T_0,T)$.
      \end{thm}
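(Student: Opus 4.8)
\emph{Plan of proof.} The plan is to run the standard rescaling--linearization analysis in similarity coordinates, set up so that the perturbation $F$ becomes negligible as the blowup is approached. Fixing a candidate blowup time $T$ and phase $\theta$, I would pass to the self-similar variables $\tau=\log\frac{T-T_0}{T-t}$, $\rho=\frac{r}{T-t}$ and write
\[
u(t,r)=e^{i\theta}(T-t)^{-\frac{2}{p-1}}\bigl(\kappa_p+\psi_1(\tau,\rho)\bigr),\qquad
\partial_t u(t,r)=e^{i\theta}(T-t)^{-\frac{2}{p-1}-1}\bigl(\tfrac{2}{p-1}\kappa_p+\psi_2(\tau,\rho)\bigr),
\]
so that $\Gamma^T_{T_0}$ is mapped onto $[0,\infty)\times\mathbb B^3_1$ and the constant $\kappa_p$ is exactly the self-similar profile of $u^T$. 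For $\Psi=(\psi_1,\psi_2)$ this produces an abstract equation $\partial_\tau\Psi=\mathbf L\Psi+\mathbf N(\Psi)+\mathbf F(\tau,\Psi)$ on $\mathcal H:=H^2\times H^1(\mathbb B^3_1)$, where $\mathbf L=\mathbf L_0+\mathbf L'$ with $\mathbf L_0$ the generator of the rescaled free wave flow, $\mathbf L'$ the (bounded, relatively compact) potential obtained by linearizing $|u|^{p-1}u$ at $\kappa_p$, $\mathbf N$ the quadratic-and-higher remainder of the nonlinearity, and $\mathbf F$ the term coming from $F$ in \eqref{eq:formF} (also depending, harmlessly and uniformly, on $T,T_0,\theta$ over the small ranges involved). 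The decisive structural point is the scaling of $\mathbf F$: using the bounds of Assumption~\ref{asm:F}, after the change of variables the $A$-part of $F$ picks up a factor $(T-t)^{\frac{2(p-q)}{p-1}}$ and the $B\,\partial_t u+C\,\partial_r u$ parts a factor $(T-t)^{\frac{p-3}{p-1}}$; since $q<p$ and $p>3$ both exponents are positive, whence $\|\mathbf F(\sigma,0)\|_{\mathcal H}\lesssim e^{-\beta\sigma}$ and $\|\mathbf F(\sigma,\Psi)-\mathbf F(\sigma,\tilde\Psi)\|_{\mathcal H}\lesssim e^{-\beta\sigma}\,h\bigl(\|\Psi\|_{\mathcal H}+\|\tilde\Psi\|_{\mathcal H}\bigr)\,\|\Psi-\tilde\Psi\|_{\mathcal H}$ for some $\beta>0$ and some continuous increasing $h$ -- the rigorous version of the heuristic that the perturbation disappears at small scales. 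Establishing these mapping properties on $\mathcal H$, together with the analogous statements for $\mathbf N$ (locally Lipschitz, $\mathbf N(0)=0$, with $\|\mathbf N(\Psi)-\mathbf N(\tilde\Psi)\|_{\mathcal H}\lesssim(\|\Psi\|_{\mathcal H}+\|\tilde\Psi\|_{\mathcal H})\|\Psi-\tilde\Psi\|_{\mathcal H}$), is the conceptual core; it relies on $H^2(\mathbb B^3_1)\hookrightarrow L^\infty$ and Moser-type estimates for $|u|^{p-1}u$ (admissible since $p>3$), and the derivative bounds \eqref{eq:estderF} and Lipschitz bounds \eqref{eq:estLipF} are exactly what is needed to control $\mathbf F$ in the $H^1$-component.

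The second ingredient is the linear theory. By the bounded perturbation theorem $\mathbf L$ generates a strongly continuous semigroup $\mathbf S(\tau)$ on $\mathcal H$, and relative compactness of $\mathbf L'$ guarantees that in any right half-plane $\sigma(\mathbf L)$ differs from the spectrum of the rescaled free flow only by finitely many eigenvalues of finite multiplicity. The key spectral fact I would establish is that, for a suitable $\omega>0$,
\[
\sigma(\mathbf L)\cap\{\lambda:\Re\lambda\ge-\omega\}=\{0,1\},
\]
with both eigenvalues semisimple: $\lambda=1$ is the gauge eigenvalue coming from the freedom in the blowup time $T$ (present already in the unperturbed real-valued problem), living in the real sector of the linearization, while $\lambda=0$ comes from the phase symmetry $u\mapsto e^{i\theta}u$, with eigenfunction the purely imaginary constant $i(\kappa_p,\tfrac2{p-1}\kappa_p)$ in the imaginary sector. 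Because the profile is the \emph{constant} $\kappa_p$, the linearized potential is constant and decouples the real and imaginary parts of $\psi_1$ with coefficients $p\,c_p$ and $c_p$ respectively, so in each sector the eigenvalue equation reduces to an explicitly solvable second-order ODE on $(0,1)$; ruling out further spectrum in $\{\Re\lambda\ge-\omega\}$ -- the real sector as in the unperturbed equation, the imaginary sector being a linear self-similar wave equation with only the phase mode -- is the heaviest technical piece, which I expect to proceed along the lines of the unperturbed analysis. Letting $\mathbf P$ be the rank-two Riesz projection onto $\{0,1\}$, one then has $\|\mathbf S(\tau)(\mathbf{1}-\mathbf P)\|_{\mathcal H\to\mathcal H}\lesssim e^{-\omega\tau}$, while on $\operatorname{ran}\mathbf P$ the evolution is two-dimensional with spectral factors $e^\tau$ and $1$, hence extends to all $\tau\in\R$.

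With the linear theory in place the solution would be built by a Lyapunov--Perron argument. For $\mathbf v\in\mathcal H$ and $\Psi\in\mathcal X_\delta:=\{\Psi\in C([0,\infty),\mathcal H):\sup_{\tau\ge0}e^{\omega\tau}\|\Psi(\tau)\|_{\mathcal H}\le\delta\}$ I would set
\[
\mathbf C(\Psi,\mathbf v):=\mathbf P\mathbf v+\int_0^\infty e^{-\sigma\mathbf L}\mathbf P\bigl[\mathbf N(\Psi(\sigma))+\mathbf F(\sigma,\Psi(\sigma))\bigr]\,d\sigma,
\]
\[
\mathbf K_{\mathbf v}(\Psi)(\tau):=\mathbf S(\tau)\bigl(\mathbf v-\mathbf C(\Psi,\mathbf v)\bigr)+\int_0^\tau\mathbf S(\tau-\sigma)\bigl[\mathbf N(\Psi(\sigma))+\mathbf F(\sigma,\Psi(\sigma))\bigr]\,d\sigma;
\]
the improper integral defining $\mathbf C$ converges because $\mathbf N(\Psi(\sigma))=O(\delta^2e^{-2\omega\sigma})$ and $\mathbf F(\sigma,\cdot)=O(e^{-\beta\sigma})$ decay, while on $\operatorname{ran}\mathbf P$ the factor $e^{-\sigma\mathbf L}$ is $e^{-\sigma}$ on the $\lambda=1$ direction and bounded on the $\lambda=0$ direction. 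Using the decay of $\mathbf S$ on $\operatorname{ran}(\mathbf{1}-\mathbf P)$, the cancellation of the growing and neutral modes built into $\mathbf C$, and the estimates from the first paragraph, one checks that $\mathbf K_{\mathbf v}$ is a contraction on $\mathcal X_\delta$ once $\|\mathbf v\|_{\mathcal H}$ and $\delta$ are small; this yields a unique fixed point $\Psi_{\mathbf v}\in\mathcal X_\delta$, and a by-now standard argument shows that $\Psi_{\mathbf v}$ solves the original equation precisely when $\mathbf C(\Psi_{\mathbf v},\mathbf v)=0$.

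It then remains to arrange $\mathbf C(\Psi_{\mathbf v},\mathbf v)=0$ in $\operatorname{ran}\mathbf P\cong\R^2$. The rescaled data $\mathbf v=\mathbf v(f,g,T,\theta)$ depends on the two modulation parameters, and to leading order $\partial_T\mathbf v$ excites the $\lambda=1$ and $\partial_\theta\mathbf v$ the $\lambda=0$ component of $\operatorname{ran}\mathbf P$, the lower-order perturbation $F$ not spoiling this; hence for every admissible $(f,g)$ a two-dimensional fixed-point (or Brouwer-degree) argument produces $T\in[1-\tfrac{\delta}{c^2},1+\tfrac{\delta}{c^2}]$ and $\theta\in(-Cp\delta,Cp\delta)$ with $\mathbf C(\Psi_{\mathbf v},\mathbf v)=0$ -- and this $\theta$ is precisely the limiting phase, since a nonzero $\lambda=0$ component of $\mathbf C$ is exactly the obstruction to $\Psi_{\mathbf v}(\tau)\to0$ as $\tau\to\infty$. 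Undoing the change of variables then turns $\|\Psi_{\mathbf v}(\tau)\|_{\mathcal H}\le\delta e^{-\omega\tau}$, together with $e^{-\omega\tau}\le(T-T_0)^{-\omega}(T-t)^{\omega}$, into the three asserted estimates with $C_0\sim\delta$; uniqueness of $u$ in $\Gamma^T_{T_0}$ follows from uniqueness of the fixed point, since any solution satisfying the stated bounds gives rise to an element of $\mathcal X_\delta$; and the parameter ranges -- in particular $R=\tfrac{3\delta}{c}+\tfrac{\delta}{c^2}$, chosen so that the base of $\Gamma^T_{T_0}$ is contained in $\mathbb B^3_R$, and $T_0\in[1-\tfrac{3\delta}{c},1-\tfrac{2\delta}{c}]$, chosen so that $F$ is only ever evaluated within its domain of definition -- come out by carefully tracking the constants. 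The main obstacle I anticipate is the pair of technical cores flagged above: the spectral and ODE analysis establishing $\sigma(\mathbf L)\cap\{\Re\lambda\ge-\omega\}=\{0,1\}$, and -- the genuinely new ingredient -- the verification that the rescaled $F$ is an exponentially decaying Lipschitz perturbation on $H^2\times H^1$.
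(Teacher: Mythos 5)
Your proposal is correct in outline and coincides with the paper on most of the load-bearing points: the passage to similarity coordinates, the observation that the rescaled perturbation decays like $(T-t)^{\frac{2(p-q)}{p-1}}$ resp.\ $(T-t)^{\frac{p-3}{p-1}}$ (exactly the paper's exponent $\tilde q$ in Lemma \ref{lemonV}), the spectral picture $\{0,1\}$ with the $\lambda=1$ mode in the real sector (potential $pc_p$) and the $\lambda=0$ phase mode in the imaginary sector (potential $c_p$), and the Lyapunov--Perron construction with a correction term that is subsequently killed by adjusting parameters. Where you genuinely diverge is the treatment of the neutral phase direction: you keep $\theta$ as a \emph{fixed} parameter (rotating the frame so that you always linearize around the real constant $\kappa_p$), absorb the $\lambda=0$ component into the correction term $\Cf$ alongside the $\lambda=1$ component, and then close with a two-dimensional Brouwer/fixed-point argument in $(T,\theta)$, using that $\partial_T$ of the data excites $\gf_0$ and $\partial_\theta$ excites $\rf_0$ transversally. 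The paper instead uses time-dependent modulation: it makes the ansatz $\Psi=\Phi+\Psi_{\theta(\tau)}$, determines $\theta(\tau)$ (with limit $\theta_\infty$) from the modulation equation \eqref{modeq2} via a contraction in the space $X_\delta$ (Lemma \ref{solutlem}), so that only the $\lambda=1$ instability is left for the correction term, and the final shooting argument in $T$ is one-dimensional (Lemma \ref{Cvanish}). Your route buys a leaner linear theory -- you never need the $\theta$-dependent family $\Lf_\theta$, its spectrum, or the Lipschitz-in-$\theta$ estimates for projections and semigroups (Lemmas \ref{liplem1}--\ref{liponrg}) -- at the price of a two-parameter degree argument and the need for continuity of the solution map in both $T$ and $\theta$; the paper's modulation route costs this extra $\theta$-dependent machinery but reduces the endgame to an intermediate-value argument in a single variable, and it is the approach inherited from \cite{DonSch16}. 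One small point in your favor on the nonlinear side: because you linearize around the nonvanishing profile, the $|u|^{q-2}$-type factors in the Lipschitz bounds for the rescaled $F$ are automatically harmless, which the paper also has to arrange (lower bound on $|[\Psi_\theta]_1+i[\Psi_\theta]_3+u_1+iu_3|$ in Lemma \ref{manylip}). Both of the "technical cores" you flag (the imaginary-sector spectral/ODE analysis and the $H^2\times H^1$ mapping properties of the rescaled $F$) are indeed where the paper spends its effort, and your sketches of them are consistent with the paper's proofs.
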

In particular, Theorem \ref{maintheorem} shows that the solution blows
up as $t\to T-$ with $u^T_\theta$ as an asymptotic profile.
Consequently, while $u^T_\theta$ does not actually solve Eq.~\eqref{startingeq}, it still provides the asymptotic blowup profile for initial data close to $u^1_0[0]$.
Furthermore, the blowup function $u^T_\theta$ satisfies
\begin{equation}
\|u^T_\theta(t,.)\|_{L^2(\B^3_{T-t})}\simeq (T-t)^{\frac{3}{2}-\frac{2}{p-1}},
\end{equation}
which makes the normalization factors appear naturally. We also remark
that, as we exclusively work with radial functions, i.e., $f(x)= \tilde{f}(|x|)$, we will throughout this paper identify $f$ with $\tilde{f}$.  
Note that for any radial function $f \in H^2(\B^3_R) $ we have that
$$
\|f\|_{H^2(\B^3_R)}^2\simeq \int_0^R r^2 (|f(r)|^2+|f'(r)|^2+|f''(r)|^2) d r,
$$
for any $R>0$.
\subsection{Related results}
The study of blowup solutions for semilinear wave equations has
attracted a lot of interest in recent years and due to the sheer
volume of results, we only mention a handful of works that deal with
ODE-type blowup. In the
unperturbed case, many results concerning the stability of the ODE blowup
are available. The subcritical case was thoroughly studied by Merle
and Zaag \cite{MerZaa03,MerZaa05,MerZaa18}, see also the work by
Alexakis and Shao \cite{SpySha17} and Azaiez \cite{Aza15}.
Furthermore, in the one-dimensional
case, Merle and Zaag were able to give a fairly complete picture of the blowup
behavior \cite{MerZaa07,MerZaa08,MerZaa12a,MerZaa12b}. They also
managed to extend some of these results to higher dimensions
\cite{MerZaa15,MerZaa16}. In the supercritical case, a very
influential numerical paper is \cite{BizChmTab04} by Bizo\'n, Chmaj,
and Tabor. Rigorous results were
established in \cite{DonSch14} and \cite{DonSch16} by the first author
and Schörkhuber. Recently, the stability for the critical equation in
three and five dimensions was shown in optimal regularity by proving
Strichartz estimates \cite{Don17,DonRao2020}. In the subcritical case,
Hamza
et.~al.~\cite{HamSai15,Ham16,HamSai14,HamZaa12a,HamZaa12b,HamZaa20}
studied the blowup behavior under various perturbations of the
equation, see also the paper by Killip, Stovall, and Visan
\cite{KilStoVis14} on blowup
bounds for the Klein-Gordon equation. Very recently, Speck
\cite{Speck20} studied ODE-type blowup in a class of quasilinear wave equations.

\subsection{Preliminary transformations}
Before we start analyzing Eq.~\eqref{startingeq} a few preliminary transformations are in order. We begin with transforming to the similarity coordinates, which are given by 
$$\tau=-\log(T-t)+\log (T-T_0),\,\,\,\, \rho=\frac{r}{T-t},$$
and setting $$\psi(\tau,\rho):=(T-T_0)^{\frac{2}{p-1}}e^{-\frac{2}{p-1}\tau}u(T-(T-T_0)e^{-\tau},(T-T_0)e^{-\tau}\rho)$$ as well as 
\begin{align*}
W(\psi)(\tau,\rho):=&(T-T_0)^{2+\frac{2}{p-1}}e^{-(2+\frac{2}{p-1}) \tau}F\bigg(T-(T-T_0)e^{-\tau},(T-T_0) \rho e^{-\tau},\\
&(T-T_0)^{-\frac{2}{p-1}}e^{\frac{2}{p-1}\tau}\psi(\tau,\rho),(T-T_0)^{-(1+\frac{2}{p-1})}e^{(1+\frac{2}{p-1})\tau}(\partial_\tau+\rho \partial_\rho)\psi(\tau,\rho),\\&(T-T_0)^{-(1+\frac{2}{p-1})}e^{(1+\frac{2}{p-1})\tau}\partial_\rho \psi(\tau,\rho)\bigg).
\end{align*}
In these coordinates Eq.~\eqref{startingeq} reads
\begin{align}
\left(\partial_\tau ^2+\frac{p+3}{p+1} \partial_{\tau}+2 \rho \partial_\rho \partial_\tau -(1-\rho^2)\partial_\rho^2+\frac{2(p+1)}{p-1}\rho\partial_\rho - \frac{2}{\rho} \partial_\rho\right)\psi(\tau,\rho)\nonumber \\-W(\psi)(\tau,\rho)=\psi(\tau,\rho)|\psi(\tau,\rho)|^{p-1}.
\end{align}
We further set \begin{align*}
&\psi_1(\tau,\rho):=\psi(\tau,\rho)\\
&\psi_2(\tau,\rho):=(\partial_\tau +\rho \partial_\rho+ \frac{2}{p-1})\psi(\tau,\rho),
\end{align*}
to obtain a first-order system given by
\begin{align}\label{equation3}
\begin{cases}
&\partial_\tau \psi_1 = \psi_2- \rho \partial_\rho \psi_1-\frac{2}{p-1}\psi_1\\
&\partial_\tau \psi_2= \partial_\rho^2 \psi_1+\frac{2}{\rho}\partial_\rho\psi_1-\rho \partial_\rho \psi_2- \frac{p+1}{p-1}\psi_2-W(\psi_1)- \psi_1|\psi_1|^{p-1}
\end{cases}
\end{align}
Note that in these coordinates the blowup $u^T_\theta$ corresponds to
$$(\psi_{\theta_1},\psi_{\theta_2})=e^{i\theta}(\kappa_p,\frac{2}{p-1}\kappa_p).
$$
Since solutions can take complex values, we will now split up $\psi_j$ into its real and imaginary part, respectively. This will be needed later on, when we linearize the nonlinearity, which is not holomorphic and  hence has to be linearized as a mapping from $\R^2$ to $\R^2$. Denote by $\varphi_j$ the real part of $\psi_j$ and by $\nu_j$ the imaginary part of $\psi_j$. Then Eq.~\eqref{equation3}, together with the initial data, reads
\begin{equation}\label{eq2}
\begin{cases}
&\partial_\tau \varphi_1 = \varphi_2- \rho \partial_\rho \varphi_1-\frac{2}{p-1}\varphi_1\\
&\partial_\tau \varphi_2= \partial_\rho^2 \varphi_1+\frac{2}{\rho}\partial_\rho \varphi_1-\rho \partial_\rho \varphi_2- \frac{p+1}{p-1}\varphi_2-\Re(W(\varphi_1+i\nu_1)- \varphi_1|\varphi_1+i\nu_1|^{p-1})\\
&\partial_\tau \nu_1 = \nu_2- \rho \partial_\rho \nu_1-\frac{2}{p-1}\nu_1\\
&\partial_\tau \nu_2= \partial_\rho^2 \nu_1+\frac{2}{\rho}\partial_\rho \nu_1-\rho \partial_\rho \nu_2- \frac{p+1}{p-1}\nu_2-\Im\left(W(\varphi_1+i\nu_1)- i\nu_1|\varphi_1+i\nu_1|^{p-1}\right)\\
&\varphi_1(0,\rho)=\Re\left((T-T_0)^{\frac{2}{p-1}}f((T-T_0)\rho)\right)\\
&\varphi_2(0,\rho)=\Re\left((T-T_0)^{\frac{p+1}{p-1}}g((T-T_0) \rho)\right)\\
&\nu_1(0,\rho)=\Im\left((T-T_0)^{\frac{2}{p-1}}f((T-T_0) \rho)\right)\\
&\nu_2(0,\rho)= \Im\left((T-T_0)^{\frac{p+1}{p-1}}g((T-T_0)\rho)\right)
.\end{cases}
\end{equation}
Note that, since we are only interested in values of $T$ that are close enough to 1, we can assume that $T \in [1-\frac{\delta}{c^2},1+\frac{\delta}{c^2}]$ and $T_0 \in [1-\frac{3\delta}{c},1-\frac{2\delta}{c}]$ for some $\delta$ and $c$ that will be specified later.
As we intend to study solutions that are close to the family of blowup functions, we will later on also make use of the splitting 

\begin{equation}\label{splitting}
\begin{cases}
&\varphi_1(0,\rho)=\Re\left((T-T_0)^{\frac{2}{p-1}}\tilde{f}((T-T_0) \rho)\right)-\kappa_p \left(\frac{T-T_0}{1-T_0}\right)^{\frac{2}{p-1}}\\
&\varphi_2(0,\rho)=\Re\left((T-T_0)^{\frac{p+1}{p-1}}\tilde{g}((T-T_0)\rho)\right)-\frac{2 \kappa_p}{p-1}\left(\frac{T-T_0}{1-T_0}\right)^{\frac{p+1}{p-1}}\\
&\nu_1(0,\rho)=\Im\left((T-T_0)^{\frac{2}{p-1}}\tilde{f}((T-T_0)\rho)\right)\\
&\nu_2(0,\rho)= \Im\left((T-T_0)^{\frac{p+1}{p-1}}\tilde{g}((T-T_0)\rho)\right)
\end{cases}
\end{equation}

\section{Linear theory}
With these preliminaries out of the way we will now analyze the linear part of Eq.~\eqref{eq2}.
To do so, we define the space $\mathcal{H}$ as
$$ \mathcal{H}:=\{\uf \in \left(H^2\times H^1(\B_1^3)\right)^2: \uf\text{  radial}\}
$$
together with the standard inner product, which we denote by $(.|.)$. Accordingly, we denote the corresponding norm by $\|.\|$. 
\subsection{Semigroup theory}
This setup now enables us to show  that the unbounded operator corresponding to the linear part in (\ref{eq2}), equipped with a proper domain, is closable and that its closure generates a $C_0$-semigroup. To that end we define the operator $\tilde{\Lf}$ with $$
D(\tilde{\Lf}):= \{\uf \in (C^3\times C^2(\overline{\B_1^3}))^2: \uf \text{  radial}\} $$
by setting
\begin{align*}
\tilde{\Lf} \uf (\rho)=\begin{pmatrix}
-\rho u_1'(\rho)+u_2(\rho)-\frac{2}{p-1}u_1(\rho)\\
u_1''(\rho)+\frac{2}{\rho}u_1'(\rho)-\rho u_2'(\rho)-\frac{p+1}{p-1}u_2(\rho)\\
-\rho u_3'(\rho)+u_4(\rho)-\frac{2}{p-1}u_3(\rho)\\
u_3''(\rho)+\frac{2}{\rho}u_3'(\rho)-\rho u_4'(\rho)-\frac{p+1}{p-1}u_4(\rho)\\
\end{pmatrix},
\end{align*}
for any $\uf \in D(\tilde{\Lf})$. 
This operator maps  $D(\tilde{\Lf})$ into $\mathcal{H}$ such that the following holds.
\begin{lem}\label{Semigroupgen}
	The operator $\tilde{\Lf}:D(\tilde{\Lf})\to \mathcal{H}$ is closable and 
	its closure $ \Lf$ generates a $C_0$-semigroup $\Sf:[0,\infty) \to \mathcal{B}\left(\mathcal{H}\right)$ with
	\begin{equation}
	\|\Sf(\tau)\|\lesssim e^{-\frac{2}{p-1}\tau}.
	\end{equation}
	Furthermore any $\uf \in D(\Lf)$ satisfies
	$\uf \in C(\overline{\B_1^3})^4\cap C^1(\B^3_1)^4$ and $u_1(0)=u_3(0)=u_2'(0)=u_4'(0)=0$.  
\end{lem}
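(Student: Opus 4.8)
The plan is to obtain all three assertions from the Lumer--Phillips theorem after reducing to a single $2\times2$ block. Observe that $\tilde\Lf$ acts block-diagonally: it applies one and the same matrix operator $\tilde\Lf_0$ to the pair $(u_1,u_2)$ and to $(u_3,u_4)$, so it suffices to study $\tilde\Lf_0$ on $\mathcal H_0:=\{(u_1,u_2)\in H^2(\B^3_1)\times H^1(\B^3_1):\text{radial}\}$ and then reassemble two copies. For $\tilde\Lf_0$ I would verify two hypotheses: (i) there is an inner product $(\cdot\mid\cdot)_*$ on $\mathcal H_0$, equivalent to the standard one, with $\Re(\tilde\Lf_0\uf\mid\uf)_*\le-\tfrac{2}{p-1}\|\uf\|_*^2$ for all $\uf\in D(\tilde\Lf_0)$; and (ii) for one large $\lambda_0>-\tfrac2{p-1}$ the range of $\lambda_0-\tilde\Lf_0$ is dense in $\mathcal H_0$. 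Then $\tilde\Lf_0$ is closable, its closure $\Lf_0$ generates a $C_0$-semigroup $\Sf_0$ with $\|\Sf_0(\tau)\|_*\le e^{-\frac2{p-1}\tau}$, hence $\|\Sf_0(\tau)\|\lesssim e^{-\frac2{p-1}\tau}$ by norm equivalence, and the statement for $\tilde\Lf$, $\Lf$, $\Sf$ follows by taking two copies.

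For the dissipativity estimate (i), one computes $\Re(\tilde\Lf_0\uf\mid\uf)_*$ by integrating by parts in the weighted $L^2(\rho^2\,d\rho)$-pairings underlying the $H^2$- and $H^1$-norms, differentiating the first equation once and twice to reach the top-order terms. Two features drive the computation: first, the wave structure, so that after integration by parts the term $+u_2$ in the first component cancels against the radial Laplacian of $u_1$ in the second, exactly as in the energy identity for $\Box u=0$, and the boundary contributions at $\rho=1$ assemble, after completing squares, into the nonpositive flux $-\tfrac12|u_1(1)|^2-\tfrac12|u_1'(1)-u_2(1)|^2-\tfrac12|u_1''(1)-u_2'(1)|^2$; second, the diagonal zeroth-order terms $(-\tfrac2{p-1},-\tfrac{p+1}{p-1})$ supply the exponential decay, the weaker rate $-\tfrac2{p-1}$ surviving. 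The catch is that the naive sum-of-Sobolev-norms inner product is \emph{not} dissipative for this sharp rate: the interior terms generated by the transport field $-\rho\partial_\rho$ leave a positive surplus concentrated in the lowest-order part of the norm, so one must enrich $(\cdot\mid\cdot)_*$ by lower-order, boundary-adapted correction terms tailored to absorb it. Constructing such a $(\cdot\mid\cdot)_*$ and closing the estimate with the constant exactly $-\tfrac2{p-1}$ is the main difficulty. Conceptually the bound is transparent: the linear part of \eqref{startingeq} is the \emph{free} wave equation, so $\Sf(\tau)$ is, up to the weight $e^{-\frac2{p-1}\tau}$ built into the definition of $\psi$, the evolution of $\Box u=0$ restricted to the backward lightcone, and the estimate is just monotonicity of the $\dot H^2\times\dot H^1$ energy in shrinking lightcones together with Hardy/trace inequalities for the inhomogeneous parts of the $H^2\times H^1$-norm — an alternative route is to build $\Sf$ directly from this free-wave flow and check that $D(\tilde\Lf)$ is a core for its generator.

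For the range condition (ii), given a smooth radial $\ff=(f_1,f_2)$ one solves $(\lambda_0-\tilde\Lf_0)\uf=\ff$ by reading off $u_2=\rho u_1'+(\lambda_0+\tfrac2{p-1})u_1-f_1$ from the first equation and substituting into the second. This produces a linear second-order ODE for $u_1$, degenerate at $\rho=1$, of the form $(1-\rho^2)u_1''+b(\rho)u_1'+c(\rho)u_1=F(\rho)$ on $(0,1)$, with a regular singular point at $\rho=0$ (indicial exponents $0$ and $-1$, so regularity of $u_1$ at the origin selects the exponent-$0$ branch) and a regular singular point at $\rho=1$. Via the Frobenius analysis at $\rho=1$ and variation of parameters one shows that for $\lambda_0$ large enough there is a solution $u_1$ smooth up to both endpoints: for large $\lambda_0$ the equation is dominated by $\lambda_0^2u_1+2\lambda_0\rho u_1'\approx F$, a regular perturbation problem with smooth solution, so the connection coefficient between the branch regular at $\rho=0$ and the branch regular at $\rho=1$ does not vanish. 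Then $\uf=(u_1,u_2)\in D(\tilde\Lf_0)$ with $(\lambda_0-\tilde\Lf_0)\uf=\ff$, so $\rg(\lambda_0-\tilde\Lf_0)$ contains all smooth radial pairs and is dense in $\mathcal H_0$.

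Finally, for the regularity of $D(\Lf)$: given $\uf\in D(\Lf)$, choose $\uf_n\in D(\tilde\Lf_0)$ with $\uf_n\to\uf$ and $\tilde\Lf_0\uf_n\to\Lf\uf=:\vf$ in $\mathcal H_0$, block by block. Componentwise $\tilde\Lf_0\uf_n=\vf_n$ is a first-order transport equation for $u_{n,1}$ together with a second-order radial equation; passing to the limit and using interior elliptic regularity ($u_1\in H^2_{\mathrm{loc}}(\B^3_1)\hookrightarrow C^1_{\mathrm{loc}}(\B^3_1)$, upgraded near $\rho=1$ via the equation) gives $\uf\in C(\overline{\B^3_1})^4\cap C^1(\B^3_1)^4$. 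The vanishing conditions at $\rho=0$ then follow from $\uf\in C^1$ together with radiality (even functions of the radial variable), noting also that $\tilde\Lf_0\uf\in\mathcal H_0$ forces the singular terms $\tfrac2\rho u_1'$ and $\tfrac2\rho u_3'$ to stay in $H^1$ near the origin. Reassembling the two blocks completes the proof; the principal obstacle is, as noted, the construction in step (i) of the inner product delivering the sharp rate, with the connection problem in step (ii) a close second.
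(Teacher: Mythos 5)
Your plan follows essentially the same route as the paper: the paper's proof consists of citing Proposition 2.1 of \cite{Don17} and Lemma 2.2 of \cite{DonSch14}, and those arguments are precisely the Lumer--Phillips scheme you outline (reduction to identical $2\times 2$ blocks, dissipativity with respect to a boundary-adapted equivalent inner product yielding the rate $-\tfrac{2}{p-1}$, a range/density argument via the degenerate ODE at $\rho=1$, and the regularity and vanishing conditions characterizing $D(\Lf)$). The one caveat is that the step you flag as the main difficulty, the construction of the modified inner product with the sharp rate, is exactly what is carried out in detail in the cited works rather than in your sketch.
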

\begin{proof}
The statement follows by combining the arguments from the proofs of Proposition 2.1 in \cite{Don17} and Lemma 2.2 in \cite{DonSch14}.
\end{proof}

This result also implies a useful bound on the resolvent of $\Lf$, which we will need later on.

\begin{lem}\label{resbound}
	The resolvent operator of $\Lf$, denoted by $\Rf_{\Lf}(\lambda) $, satisfies 
	$$
	\|\Rf_{\Lf}(\lambda)\| \lesssim \dfrac{1}{\Re(\lambda)+\frac{2}{p-1}},
	$$
	for any $\lambda \in \C$ with $ \Re(\lambda)>-\frac{2}{p-1}$.
\end{lem}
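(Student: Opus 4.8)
The plan is to obtain the bound directly from the growth estimate on the semigroup via the Laplace transform representation of the resolvent, so almost nothing beyond Lemma \ref{Semigroupgen} is needed.

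First, recall that Lemma \ref{Semigroupgen} gives that $\Lf$ generates a $C_0$-semigroup $\Sf$ with $\|\Sf(\tau)\| \le C e^{-\frac{2}{p-1}\tau}$ for some $C>0$ and all $\tau \ge 0$. In particular the growth bound of $\Sf$ is at most $-\frac{2}{p-1}$. By the classical Hille--Yosida theory, every $\lambda \in \C$ with $\Re(\lambda) > -\frac{2}{p-1}$ then belongs to the resolvent set of $\Lf$ and the resolvent is given by the (absolutely convergent, in the Bochner sense) integral
\[
\Rf_{\Lf}(\lambda)\uf = \int_0^\infty e^{-\lambda \tau}\Sf(\tau)\uf\, d\tau, \qquad \uf \in \mathcal{H}.
\]
Absolute convergence here is immediate from $\|e^{-\lambda\tau}\Sf(\tau)\uf\| \le C e^{-(\Re(\lambda)+\frac{2}{p-1})\tau}\|\uf\|$ together with $\Re(\lambda)+\frac{2}{p-1} > 0$.

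Second, estimating the integral using the same bound yields
\[
\|\Rf_{\Lf}(\lambda)\uf\| \le \int_0^\infty e^{-\Re(\lambda)\tau}\|\Sf(\tau)\uf\|\, d\tau \le C\|\uf\| \int_0^\infty e^{-(\Re(\lambda)+\frac{2}{p-1})\tau}\, d\tau = \frac{C}{\Re(\lambda)+\frac{2}{p-1}}\|\uf\|.
\]
Taking the supremum over $\uf \in \mathcal{H}$ with $\|\uf\| = 1$ gives $\|\Rf_{\Lf}(\lambda)\| \lesssim \frac{1}{\Re(\lambda)+\frac{2}{p-1}}$, which is the assertion.

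There is no real obstacle in this argument: it is a routine consequence of the exponential decay of $\|\Sf(\tau)\|$ and the standard fact that, above the growth bound, the resolvent is the Laplace transform of the semigroup. The only points worth a line of justification are that the Bochner integral converges absolutely (handled above) and that the implicit constant from Lemma \ref{Semigroupgen} is harmless, which it is since it factors out of the estimate.
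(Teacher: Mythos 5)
Your proof is correct and follows the same route as the paper: the paper simply cites the standard semigroup-theory fact (Engel--Nagel, Theorem 1.10) that above the growth bound the resolvent is the Laplace transform of the semigroup, which is precisely the argument you have written out explicitly from the decay estimate of Lemma \ref{Semigroupgen}.
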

\begin{proof}
	This is immediate from the previous result and standard semigroup theory (see for instance \cite{EngNag99} p.55,
	Theorem 1.10).
\end{proof}

\subsection{The modulation ansatz}  \label{sgt}
In addition to $\Lf$ we define the operator $\Nf$ by
$$
\Nf(\uf)(\rho):=\begin{pmatrix}
0\\
\left|\left( u_1(\rho)\, ,u_3(\rho)\right)\right|^{p-1}u_1(\rho)\\
0\\
\left|\left( u_1(\rho)\, ,u_3(\rho)\right)\right|^{p-1}u_3(\rho)\\
\end{pmatrix},
$$
with $D(\Nf)=\mathcal{H}$. That we can indeed define $\Nf$ on the whole space $\mathcal{H}$ will follow from Lemma \ref{nonesti}.
Further, for any $\uf \in \mathcal{H}$ and $\tau \in [0,\infty)$  we set 
\begin{align*}
\Wf(\uf,\tau)(\rho):=&(T-T_0)^{2+\frac{2}{p-1}}e^{-(2+\frac{2}{p-1}) \tau}F\bigg(T-(T-T_0)e^{-\tau},(T-T_0) \rho e^{-\tau},\\
&(T-T_0)^{-\frac{2}{p-1}}e^{\frac{2}{p-1}\tau}(u_1+iu_3)(\rho),
\\&(T-T_0)^{-(1+\frac{2}{p-1})}e^{(1+\frac{2}{p-1})\tau}((u_2+iu_4)(\rho)-\frac{2}{p-1}(u_1+i u_3)(\rho)),\\&(T-T_0)^{-(1+\frac{2}{p-1})}e^{(1+\frac{2}{p-1})\tau}\partial_\rho (u_1+iu_3)(\rho)\bigg).
\end{align*}
and
\begin{align*}
\Vf(\uf,\tau)(\rho):=\begin{pmatrix}
0\\
\Re\left( \Wf(\uf,\tau)(\rho)\right)\\
0\\
\Im\left( \Wf(\uf,\tau)(\rho)\right)
\end{pmatrix},
\end{align*}
in accordance with the transformations from Section 1.
Lemma \ref{lemonV} shows that this really defines an operator mapping from $\mathcal{H}\times [0,\infty)$ to $\mathcal{H}$.
With these definitions, Lemma \ref{Semigroupgen} now enables us to abstractly rewrite Eq.~\eqref{eq2} as
\begin{equation}\label{abstraceq}
\partial_\tau \Psi(\tau)=\Lf \Psi(\tau)+ \Nf(\Psi(\tau))+\Vf(\Psi(\tau),\tau),
\end{equation}
where $\Psi$ is a function mapping from some interval $I \subset [0,\infty)$ that contains 0 to $\mathcal{H}$. Now we can also provide the aforementioned definition of a solution. 
\begin{defi}
	We call a function $u: \Gamma^T_{T_0}\rightarrow \C$ a solution of Eq.~\eqref{startingeq}, if the corresponding $\Psi:[0,\infty)\mapsto \mathcal{H}$ belongs to $C([0,\infty),\mathcal{H})$ and satisfies
	\begin{equation}
\Psi(\tau)=\Sf(\tau)\Psi(0)+ \int_0^{\tau}\Sf(\tau-\sigma)\left(\Nf(\Psi(\sigma))+\Vf(\Psi(\sigma),\sigma)\right)	d\sigma,\end{equation}
for all $\tau \geq 0$.
\end{defi} 
Next, let $\Psi_\theta$ be the function we obtain by applying the previously used transformations to the blowup function $\psi_\theta^T$. This yields
\begin{equation}
\Psi_\theta:=\begin{pmatrix}
\psi_{\theta,1}\\
\psi_{\theta,2}\\
\psi_{\theta,3}\\
\psi_{\theta,4}
\end{pmatrix}=
\begin{pmatrix}
&\kappa_p \cos(\theta) \\
&\frac{2}{p-1}\kappa_p\cos(\theta)\\
&\kappa_p \sin(\theta)\\
&\frac{2}{p-1}\kappa_p \sin(\theta)
\end{pmatrix}.
\end{equation}
Further, we let $\theta$ depend directly on $\tau$ and assume that $\lim_{\tau \to \infty}\theta(\tau)=: \theta_\infty$ exists.
As our goal is to study the behavior of solutions that are close to the family of blowup functions we make the ansatz
\begin{equation}\label{splitting ansatz}
\Psi(\tau)= \Phi(\tau)+\Psi_{\theta(\tau)}.
\end{equation}
By inserting this into Eq.~\eqref{abstraceq} and setting 

$$ 
\Lf_{\theta}'\uf (\rho)=\begin{pmatrix}
0 & 0& 0&0\\
 (p-1)c_p \cos(\theta)^2+c_p &0 & (p-1)c_p\cos(\theta)\sin(\theta) &0\\
 0 &0&0&0 \\
(p-1)c_p\cos(\theta)\sin(\theta) &0& (p-1)c_p\sin(\theta)^2+c_p &0
\end{pmatrix}
\begin{pmatrix}
u_1(\rho)\\
u_2(\rho)\\
u_3(\rho)\\
u_4(\rho)\\
\end{pmatrix}
$$
as well as
$$
\N_{\theta(\tau)}(\uf):=\N(\Psi_{\theta(\tau)}+\uf)-\N(\Psi_{\theta(\tau)})-\Lf_{\theta(\tau)}'\uf,
$$
we obtain the equation
\begin{align}\label{eqtosovle}
\partial_{\tau}\Phi(\tau)-\Lf\Phi(\tau)-\Lf_{\theta_{\infty}}'\Phi(\tau)=\left( \Lf_{\theta(\tau)}'-\Lf'_{\theta_{\infty}} \right)\Phi(\tau)\nonumber&+\N_{\theta(\tau)}(\Phi(\tau))
\\&+\Vf(\Phi(\tau)+\Psi_{\theta(\tau)}(\tau),\tau)-\partial_{\tau}\Psi_{\theta(\tau)}.
\end{align}
\begin{lem}\label{growth}
	For any $\theta \in \R$ the operator $\Lf_\theta:=\Lf+\Lf_\theta'$ generates a strongly continuous semigroup $\Sf_\theta:[0,\infty)\to \mathcal{B}(\mathcal{H})$.
\end{lem}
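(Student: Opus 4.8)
The plan is to view $\Lf_\theta' $ as a bounded perturbation of $\Lf$ and invoke the bounded perturbation theorem for $C_0$-semigroups. First I would observe that $\Lf_\theta'$, as defined by the displayed matrix, is a multiplication operator with constant (in $\rho$) coefficients depending only on $\theta$; hence for any $\uf\in\mathcal{H}$ we have $\Lf_\theta'\uf\in\mathcal{H}$ and $\|\Lf_\theta'\uf\|\lesssim (p-1)c_p\,\|\uf\|$, uniformly in $\theta\in\R$ since $\cos\theta,\sin\theta$ are bounded. Thus $\Lf_\theta'\in\mathcal{B}(\mathcal{H})$. By Lemma~\ref{Semigroupgen}, $\Lf$ generates the $C_0$-semigroup $\Sf$. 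The bounded perturbation theorem (see e.g.\ \cite{EngNag99}, Ch.~III, Thm.~1.3) then immediately yields that $\Lf_\theta=\Lf+\Lf_\theta'$, with domain $D(\Lf_\theta)=D(\Lf)$, generates a $C_0$-semigroup $\Sf_\theta:[0,\infty)\to\mathcal{B}(\mathcal{H})$, which is precisely the claim.

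Concretely, the semigroup $\Sf_\theta$ can be constructed via the Dyson--Phillips series
\[
\Sf_\theta(\tau)\uf=\sum_{n=0}^{\infty}\Sf_n(\tau)\uf,\qquad
\Sf_0(\tau)=\Sf(\tau),\quad
\Sf_{n+1}(\tau)\uf=\int_0^\tau \Sf(\tau-\sigma)\,\Lf_\theta'\,\Sf_n(\sigma)\uf\,d\sigma,
\]
which converges in $\mathcal{B}(\mathcal{H})$ uniformly on compact $\tau$-intervals because of the growth bound $\|\Sf(\tau)\|\lesssim e^{-\frac{2}{p-1}\tau}\le M$ from Lemma~\ref{Semigroupgen} together with the uniform bound on $\|\Lf_\theta'\|$; a Gronwall estimate gives $\|\Sf_\theta(\tau)\|\lesssim e^{(M\|\Lf_\theta'\|)\tau}$. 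One checks by the standard telescoping argument that $\tau\mapsto\Sf_\theta(\tau)\uf$ is continuous for each $\uf$, that $\Sf_\theta(0)=\I$, and that the semigroup property $\Sf_\theta(\tau+\sigma)=\Sf_\theta(\tau)\Sf_\theta(\sigma)$ holds, so $\Sf_\theta$ is a $C_0$-semigroup. Identifying its generator with $\Lf+\Lf_\theta'$ is the routine resolvent computation: for $\Re(\lambda)$ large enough, $\Rf_\Lf(\lambda)$ exists by Lemma~\ref{resbound}, $\|\Lf_\theta'\Rf_\Lf(\lambda)\|<1$, and one inverts $\lambda-\Lf-\Lf_\theta'=(\I-\Lf_\theta'\Rf_\Lf(\lambda))(\lambda-\Lf)$ via a Neumann series to match the Laplace transform of $\Sf_\theta$.

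There is essentially no serious obstacle here; the only point requiring a line of justification is the uniform boundedness of $\Lf_\theta'$ on $\mathcal{H}$, and this is immediate since $\Lf_\theta'$ is a finite matrix of constants acting componentwise and the entries are bounded by $(p-1)|c_p|+|c_p|$ independently of $\theta$. (If one wanted more, namely a decomposition of $\mathcal{H}$ into stable/unstable subspaces for $\Sf_\theta$, that would be real work, but the present statement only asserts generation of a strongly continuous semigroup.) I would therefore keep the proof to a couple of sentences, citing Lemma~\ref{Semigroupgen}, the bound $\|\Lf_\theta'\|_{\mathcal{B}(\mathcal{H})}\lesssim c_p$, and the bounded perturbation theorem.
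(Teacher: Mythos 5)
Your proposal is correct and follows exactly the paper's argument: note that $\Lf_\theta'$ is a bounded operator on $\mathcal{H}$ and apply the Bounded Perturbation Theorem to the generator $\Lf$ from Lemma \ref{Semigroupgen}. The additional Dyson--Phillips and resolvent details are just the standard proof of that theorem and are not needed beyond the two-line citation the paper itself uses.
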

\begin{proof}
Since $\Lf_\theta'$ is a bounded linear operator on $\mathcal{H}$, the claim follows from the Bounded Perturbation Theorem.
\end{proof}
\subsection{Spectral Analysis of $\Lf_0$}
In order to proceed, it is essential to recover a growth estimate for $\Sf_\theta$. To this end, we will now compute the spectrum of $\Lf_0$ and then subsequently also $\sigma(\Lf_\theta)$ for $\theta$ small enough in absolute value.
But first, one more small preliminary lemma is required.

\begin{lem}\label{identity}
	Let $\theta \in \R$. Then $\lambda \in  \sigma(\Lf_\theta)\setminus\sigma(\Lf)$ implies $\lambda \in \sigma_p(\Lf_\theta)$. 
\end{lem}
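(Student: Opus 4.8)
The plan is to exploit the fact that $\Lf_\theta = \Lf + \Lf_\theta'$ differs from $\Lf$ only by the bounded, in fact finite-rank, operator $\Lf_\theta'$, and that $\Lf$ has good resolvent bounds on the half-plane $\Re(\lambda) > -\frac{2}{p-1}$ by Lemma \ref{resbound}. First I would fix $\lambda \in \sigma(\Lf_\theta) \setminus \sigma(\Lf)$; since $\lambda \notin \sigma(\Lf)$, the resolvent $\Rf_\Lf(\lambda)$ exists as a bounded operator, and we can write the standard algebraic identity
\[
\lambda - \Lf_\theta = \lambda - \Lf - \Lf_\theta' = \left(\mathrm{I} - \Lf_\theta'\Rf_\Lf(\lambda)\right)(\lambda - \Lf).
\]
From this factorization, $\lambda - \Lf_\theta$ is invertible (equivalently $\lambda \in \rho(\Lf_\theta)$) if and only if $\mathrm{I} - \Lf_\theta'\Rf_\Lf(\lambda)$ is invertible on $\mathcal H$. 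Since $\lambda \in \sigma(\Lf_\theta)$, it follows that $1 \in \sigma\!\left(\Lf_\theta'\Rf_\Lf(\lambda)\right)$.

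Now the key point is that $\Lf_\theta'$ is a finite-rank operator — inspecting its matrix form, its range is contained in the span of the second and fourth components, so $\Lf_\theta'\Rf_\Lf(\lambda)$ has rank at most $2$ and in particular is compact. By the spectral theory of compact operators (the Riesz–Schauder theorem), every nonzero point of $\sigma\!\left(\Lf_\theta'\Rf_\Lf(\lambda)\right)$ is an eigenvalue of finite algebraic multiplicity. Hence $1$ is an eigenvalue: there is a nonzero $\uf \in \mathcal H$ with $\Lf_\theta'\Rf_\Lf(\lambda)\uf = \uf$. Setting $\vf := \Rf_\Lf(\lambda)\uf$, which is nonzero since $\uf \neq 0$, we then have $\vf \in D(\Lf) = D(\Lf_\theta)$ and
\[
(\lambda - \Lf_\theta)\vf = (\lambda - \Lf)\vf - \Lf_\theta'\vf = \uf - \uf = 0,
\]
so $\lambda$ is an eigenvalue of $\Lf_\theta$, i.e. $\lambda \in \sigma_p(\Lf_\theta)$.

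The argument is essentially a soft, standard perturbation-theoretic fact — the only thing that has to be checked carefully is that $\Lf_\theta'$ is compact (here even finite-rank, which is immediate from its explicit matrix representation acting through the components $u_1, u_3$), so that the Riesz–Schauder dichotomy applies. I do not expect a genuine obstacle; the one bookkeeping subtlety is making sure $\vf \neq 0$ and $\vf \in D(\Lf)$, both of which follow at once because $\Rf_\Lf(\lambda)$ is a bijection from $\mathcal H$ onto $D(\Lf)$. One could alternatively phrase the same reasoning via the factorization $\lambda - \Lf_\theta = (\lambda-\Lf)\left(\mathrm I - \Rf_\Lf(\lambda)\Lf_\theta'\right)$ and analyze $\mathrm I - \Rf_\Lf(\lambda)\Lf_\theta'$ instead; both routes work equally well.
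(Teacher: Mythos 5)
Your route is the same as the paper's: the factorization $\lambda-\Lf_\theta=(\I-\Lf_\theta'\Rf_\Lf(\lambda))(\lambda-\Lf)$, compactness of $\Lf_\theta'\Rf_\Lf(\lambda)$, the Riesz--Schauder theorem to turn $1\in\sigma(\Lf_\theta'\Rf_\Lf(\lambda))$ into an eigenvalue, and then transporting the eigenvector back via $\vf=\Rf_\Lf(\lambda)\uf$; indeed you supply more detail (nonvanishing of $\vf$, membership in $D(\Lf)=D(\Lf_\theta)$) than the paper's one-line proof does.

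However, your justification of the one essential hypothesis is incorrect: $\Lf_\theta'$ is \emph{not} finite rank, and $\Lf_\theta'\Rf_\Lf(\lambda)$ does not have rank at most $2$. Being supported in the second and fourth components does not bound the rank; the range of $\Lf_\theta'$ consists of all vectors $(0,f,0,g)$ with $(f,g)$ obtained by applying a fixed invertible $2\times 2$ constant matrix to $(u_1,u_3)$, and since $u_1,u_3$ range over radial $H^2(\B_1^3)$ this is an infinite-dimensional subspace of $\mathcal H$. The correct reason for compactness -- and the one the paper's assertion implicitly rests on -- is the compact Sobolev embedding $H^2(\B_1^3)\hookrightarrow H^1(\B_1^3)$: $\Lf_\theta'$ takes the $H^2$-components $u_1,u_3$, forms constant-coefficient linear combinations, and places them into the $H^1$-slots, so it factors through this compact embedding and is therefore a compact operator on $\mathcal H$; composing with the bounded operator $\Rf_\Lf(\lambda)$ preserves compactness. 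With this repair, the remainder of your argument goes through verbatim.
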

\begin{proof}
	Let $\lambda \in  \sigma(\Lf_\theta)\setminus\sigma(\Lf)$.
	Then we have the identity
	$$
	\lambda-\Lf_{\theta} =(1-\Lf_{\theta}'\Rf_{\Lf}(\lambda))(\lambda-\Lf).
	$$
	Therefore, as $\Lf_{\theta}'\Rf_{\Lf}(\lambda)$ is a compact operator, we obtain $\lambda \in  \sigma_p(\Lf_\theta)$ by employing the spectral theorem for compact operators.
\end{proof}
This result enables us to explicitly calculate the spectrum in the case $\theta=0$. 
\begin{prop}
	The spectrum of $\Lf_0$ is contained in the set $$\{\lambda \in \C: \Re(\lambda)\leq -\frac{2}{p-1}\}\cup\{0,1\}.$$
\end{prop}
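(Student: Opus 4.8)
The strategy is to reduce the spectral problem to an ODE analysis. By Lemma \ref{resbound} the resolvent $\Rf_{\Lf}(\lambda)$ exists and is bounded whenever $\Re(\lambda)>-\tfrac{2}{p-1}$, so $\sigma(\Lf)\subset\{\Re(\lambda)\le -\tfrac{2}{p-1}\}$. Since $\Lf_0'$ is bounded, Lemma \ref{identity} tells us that any $\lambda\in\sigma(\Lf_0)$ with $\Re(\lambda)>-\tfrac{2}{p-1}$ must be an eigenvalue of $\Lf_0$. Thus it suffices to show: if $\Re(\lambda)>-\tfrac{2}{p-1}$ and there is a nonzero $\uf\in D(\Lf_0)$ with $\Lf_0\uf=\lambda\uf$, then $\lambda\in\{0,1\}$.

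First I would write out the eigenvalue equation $(\Lf+\Lf_0')\uf=\lambda\uf$ componentwise. With $\theta=0$, the matrix $\Lf_0'$ only couples $u_1$ into the second slot and $u_3$ into the fourth slot (the off-diagonal $\cos\theta\sin\theta$ terms vanish), and moreover $\Lf_0'$ does not touch the $u_3,u_4$ block's dependence on $u_1,u_2$. So the system decouples into two copies — an equation for $(u_1,u_2)$ and an identical equation for $(u_3,u_4)$ — each of which, after eliminating $u_2$ (resp.\ $u_4$) via the first-slot relation $u_2=\rho u_1'+(\lambda+\tfrac{2}{p-1})u_1$, reduces to a single second-order ODE for $u_1$ (resp.\ $u_3$) of the form
\begin{equation*}
(1-\rho^2)u_1''+\Big(\tfrac{2}{\rho}-\big(2\lambda+\tfrac{2(p+1)}{p-1}\big)\rho\Big)u_1'-\Big(\big(\lambda+\tfrac{2}{p-1}\big)\big(\lambda+\tfrac{p+1}{p-1}\big)-c_p-(p-1)c_p\Big)u_1=0,
\end{equation*}
up to the exact arithmetic of the constants, which I will not grind through here. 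This is a Jacobi-type ODE with regular singular points at $\rho=0$ and $\rho=1$. The admissibility condition is that the solution be in $H^2(\B_1^3)$ and satisfy the boundary conditions from Lemma \ref{Semigroupgen} (in particular $u_1(0)=0$ is forced, or rather analyticity at $0$), i.e.\ we need a solution analytic at both $\rho=0$ and $\rho=1$.

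The core of the argument is then a Frobenius/indicial analysis: at $\rho=1$ the indicial exponents are $0$ and some negative number depending on $\lambda$, so requiring analyticity at $\rho=1$ selects a one-dimensional space of candidate solutions; analyticity at $\rho=0$ is a second constraint. A solution analytic on all of $\overline{\B_1^3}$ exists only for a discrete set of $\lambda$, and one checks directly that $\lambda=1$ works with the explicit eigenfunction coming from the time-translation symmetry (differentiating $u^T$ in $T$ produces the gauge mode, which in similarity coordinates is an eigenfunction at $\lambda=1$), and $\lambda=0$ works with the eigenfunction coming from the scaling/phase structure. For all other $\lambda$ with $\Re(\lambda)>-\tfrac{2}{p-1}$ one shows the two analyticity requirements are incompatible — this is the standard argument from the spectral analysis in \cite{DonSch14, Don17}, where one uses the hypergeometric representation of the solutions and tracks when the connection coefficient vanishes. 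The main obstacle is precisely this last step: verifying that the transcendental equation forcing simultaneous analyticity at both singular points has no solutions in the half-plane $\Re(\lambda)>-\tfrac{2}{p-1}$ other than $0$ and $1$. I would handle it by transforming the ODE to the hypergeometric equation, identifying the relevant solution in terms of ${}_2F_1$, and showing the obstruction to analyticity — a ratio of Gamma functions — is nonzero off the exceptional set, exactly as in the cited references, since the linearized operator here is the same as the one arising for the unperturbed three-dimensional problem (the perturbation $F$ does not enter the linearization around the constant blowup profile).
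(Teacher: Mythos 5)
There is a genuine structural gap: the two decoupled blocks are \emph{not} identical copies of one another. At $\theta=0$ the matrix $\Lf_0'$ has the entry $(p-1)c_p+c_p=pc_p$ in the $(2,1)$ position but only $c_p$ in the $(4,3)$ position, so after eliminating $u_2$ and $u_4$ you get two \emph{different} second-order ODEs: the $(u_1,u_2)$ block carries the potential $pc_p$ (linearization of $|u|^{p-1}u$ in the direction of the real profile), while the $(u_3,u_4)$ block carries the potential $c_p$ (the phase direction, which only appears because solutions are complex-valued). Your single ODE, with zeroth-order coefficient containing $c_p+(p-1)c_p$, is the real block; asserting that the $(u_3,u_4)$ equation is the same equation is false, and it also makes your later claim that ``$\lambda=0$ works'' internally inconsistent, since $\lambda=0$ is \emph{not} an eigenvalue of the $pc_p$-ODE — its eigenfunction $\rf_0=(0,0,1,\tfrac{2}{p-1})$ lives entirely in the imaginary block.

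Because of this, the heart of the proof is missing. Deferring to the spectral analysis of \cite{DonSch14,Don17} (equivalently, Lemma 3.5 of \cite{DonSch12}) only disposes of the real block, yielding $\lambda=1$ as the sole unstable eigenvalue there. For the imaginary block one must separately show that the ODE
\begin{equation*}
-(1-\rho^2)v''+2\bigl(\lambda+\tfrac{2}{p-1}\bigr)\rho v'+\Bigl(\bigl(\lambda+\tfrac{2}{p-1}\bigr)\bigl(\lambda+\tfrac{2}{p-1}-1\bigr)-c_p\Bigr)v=0
\end{equation*}
has no admissible solutions for $\Re(\lambda)>-\tfrac{2}{p-1}$ other than at $\lambda=0$. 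This is a new computation, not contained in the cited references: one substitutes $z=\rho^2$, obtains a hypergeometric equation with $a=\tfrac12(\lambda-1)$, $b=\tfrac12(\lambda+\tfrac{4}{p-1})$, $c=\tfrac12$, and requires the connection coefficient $\Gamma(a+b+1-c)\Gamma(1-c)/\bigl(\Gamma(a+1-c)\Gamma(b+1-c)\bigr)$ to vanish, which forces $\lambda=-2k$ or $\lambda=-1-k-\tfrac{4}{p-1}$, $k\in\mathbb{N}_0$, leaving only $\lambda=0$ in the half-plane. Your outline (indicial analysis, ruling out the singular branch at $\rho=1$, Gamma-ratio obstruction) is the right kind of argument, but it must be carried out for the $c_p$-potential as well, and your premise that ``the linearized operator here is the same as in the unperturbed problem'' obscures exactly this point: it is the same only on real-valued perturbations.
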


\begin{proof}
	The growth estimate given by Lemma \ref{Semigroupgen} implies that $\sigma(\Lf)$ is contained in the set $\{\lambda\in \C:\Re(\lambda)\leq -\frac{2}{p-1}\}.$ 
	Hence, any spectral point $\lambda$ with $\Re(\lambda)> -\frac{2}{p-1}$ has to be an eigenvalue by Lemma \ref{identity}. Therefore, there exists a nontrivial $\uf \in D(\Lf_0) $ with $(\lambda - \Lf_0)\uf=\textbf{0}$. From $(\lambda-\Lf_0)\uf=0$ we obtain
	\begin{align*}
	u_{j+1}(\rho)=(\lambda+\frac{2}{p-1}) u_j+\rho u_j'(\rho),
	\end{align*}
	for $j=1,3$.
	A direct calculation now shows that $(\lambda-\Lf_0)\uf=0$ implies
		\begin{align}
	\begin{cases}
	&-(1-\rho^2)u_1''(\rho) +2(\lambda \rho -\frac{1}{\rho}+ \frac{p+1}{p-1}\rho) u_1'(\rho)+(\lambda(\lambda+\frac{p+3}{p-1})-2\frac{p+1}{p-1})u_1(\rho)=0\\
	&-(1-\rho^2)u_3''(\rho) +2(\lambda\rho  -\frac{1}{\rho} +\frac{p+1}{p-1}\rho)u_3'(\rho)+ \lambda (\lambda+\frac{p+3}{p-1})u_3(\rho)=0.
	\end{cases}
	\end{align}
	By setting $u_1(\rho)=\frac{v_1(\rho)}{\rho}$ and $u_3(\rho)=\frac{v_2(\rho)}{\rho}$, this system turns into
	\begin{align}
	\begin{cases}
	&-(1-\rho^2)v_1''(\rho) +2(\lambda +  \frac{2}{p-1})\rho v_1'(\rho)+( (\lambda+\frac{2}{p-1})(\lambda+\frac{2}{p-1}-1)-pc_p)v_1(\rho)=0\\
	&-(1-\rho^2)v_2''(\rho) +2(\lambda +  \frac{2}{p-1})\rho v_2'(\rho)+( (\lambda+\frac{2}{p-1})(\lambda+\frac{2}{p-1}-1)-c_p)v_2(\rho)=0.
	\end{cases}
	\end{align}
	Since the two equations decouple, we will consider them separately.
	The first one has already been studied in Lemma 3.5 of \cite{DonSch12}, where the authors showed, with the help of hypergeometric functions, that the only eigenvalue of this equation is 1. In order to analyze the second one, we make the substitution $\rho \mapsto z=\rho^2$ and set $w(z)=v_2(\sqrt{z})$ to obtain
	$$
	z(1-z)w''(z)+\left(\frac{1}{2}-(\lambda+\frac{2}{p-1}+\frac{1}{2})z\right)w'(z)-\frac{1}{4}\left(\lambda^2-\frac{p-5}{p-1}\lambda-\frac{4}{p-1}\right)w(z)=0.
	$$
	Next, by setting $c=\frac{1}{2},\,a=\frac{1}{2}(\lambda-1)$ and $ b=\frac{1}{2}(\lambda+\frac{4}{p-1})$, the equation turns into
	\begin{equation}
	z(1-z)w''(z)+(c-(a+b+1)z)w'(z)-ab\,w(z)=0.
	\end{equation}
	Around $z=0$ a fundamental system of solutions is given by
	\begin{align*}
	g_1(z)&=\, _2F_1(a,b;c;z)\\
	g_2(z)&=\,z^\frac{1}{2}\, _2F_1(a-c+1,b-c+1;2-c;z)
	\end{align*}
	where $ _2F_1$ denotes the standard hypergeometric function (see for instance \cite{OlvLonBoiClar10}). If $c-a-b$ does not vanish, a fundamental system around $z=1$ is given by 
	\begin{align*}
	f_1(z)&= \,_2F_1(a,b;a+b+1-c;1-z)\\
	f_2(z)&=(1-z)^{c-a-b}\, _2F_1(c-a,c-b;c-a-b+1;1-z).
	\end{align*}
	If $(c-a-b)=0$, a fundamental system around $z=1$ is given by $f_1$ and a second solution which diverges logarithmically for $z\rightarrow 1$.
	Since $\Re(\lambda)$ is assumed to be bigger than $-\frac{2}{p-1},$ $f_2 \notin H^2(\B_1^3)$  and hence for a solution to be in  $H^2(\B_1^3)$, it must be a multiple of $f_1$. Therefore, there have to be constants $c_1$ and $c_2$ such that $f_1 = c_1 g_1+ c_2 g_2$. Since the solution has to satisfy the boundary condition $w(0)=0$, which stems from the transformation $\rho u_j(\rho)=v_j(\rho)$ and the fact that we require $u_j\in H^2(\B_1^3)$, the coefficient  $c_1$ has to vanish. Thanks to the explicit corresponding connection formula, the coefficient is given by 
	$$
	c_1=\dfrac{\Gamma(a+b+1-c)\Gamma(1-c)}{\Gamma(a+1-c)\Gamma(b+1-c)},
	$$
	where $\Gamma$ denotes the gamma function.
	For $c_1$ to vanish, $a+1-c$ or $b+1-c$ needs to be a pole of $\Gamma$. This yields $\lambda = -2k $ for $k \in \mathbb{N}_0$   or $\lambda= -1-k-\frac{4}{p-1}$ for $ k\in \mathbb{N}_0$. Therefore $\lambda$ has to be real and since $\Re(\lambda)$ is assumed to be bigger than $-\frac{2}{p-1}$, the only possible choice is 0.
\end{proof}
The next two lemmas will determine the corresponding geometric and algebraic multiplicities of the eigenvalues $0$ and $1$.
\begin{lem}
	The eigenvalues 0 and 1 both have geometric multiplicity 1. Furthermore the  geometric eigenspace corresponding to the eigenvalue $0$ is spanned by $\rf_0$ while the other one is spanned by $
	\gf_0,
	$
	with
	$$\rf_0(\rho):=
	\begin{pmatrix}
	0\\
	0\\
	1\\
	\frac{2}{p-1}
	\end{pmatrix}
	\;\; and \quad 
	\gf_0(\rho):=
	\begin{pmatrix}
	1\\
	\frac{p+1}{p-1}\\
	0\\
	0
	\end{pmatrix}.
	$$
\end{lem}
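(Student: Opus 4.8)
The plan is to compute the geometric eigenspaces directly, using the ODE analysis already carried out in the proof of the preceding proposition, together with the normalization $u_j(0)=0$ for $j=1,3$ imposed by $D(\Lf_0)\subset D(\Lf)$. Recall that an eigenvector $\uf$ of $\Lf_0$ at eigenvalue $\lambda\in\{0,1\}$ is determined by its first and third components via $u_{j+1}=(\lambda+\tfrac{2}{p-1})u_j+\rho u_j'$ for $j=1,3$, and that the substitution $u_1=v_1/\rho$, $u_3=v_2/\rho$ decouples the second-order system into the two hypergeometric-type equations displayed above; the first (for $v_1$) has $1$ as its only eigenvalue, the second (for $v_2$) has $0$ as its only eigenvalue. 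So I would argue case by case.

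For $\lambda=0$: the $v_1$-equation has no $H^2$ solution satisfying the boundary condition, so $u_1\equiv 0$ and hence $u_2\equiv 0$. For $v_2$, setting $\lambda=0$ and tracing through the hypergeometric analysis, the unique (up to scalar) admissible solution must yield a constant $u_3$; one checks directly that $u_3(\rho)=1$ solves the $\lambda=0$ equation $-(1-\rho^2)u_3''+2(-\tfrac1\rho+\tfrac{p+1}{p-1}\rho)u_3'=0$ and lies in $H^2(\B_1^3)$ with $u_3(0)=1\neq 0$ — wait, here one uses that the relevant boundary condition from Lemma \ref{Semigroupgen} is $u_3(0)$ unconstrained while $u_4'(0)=0$; indeed $u_4=\tfrac{2}{p-1}u_3$, consistent. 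Thus the $\lambda=0$ eigenspace is spanned by $\rf_0=(0,0,1,\tfrac{2}{p-1})^\top$, hence has geometric multiplicity $1$. For $\lambda=1$: symmetrically, the $v_2$-equation forces $u_3\equiv u_4\equiv 0$, while the $v_1$-equation at $\lambda=1$ has the solution $u_1(\rho)=1$ (check: $-(1-\rho^2)\cdot 0+2(\rho-\tfrac1\rho+\tfrac{p+1}{p-1}\rho)\cdot 0+(1\cdot(1+\tfrac{p+3}{p-1})-2\tfrac{p+1}{p-1})\cdot 1 = 1+\tfrac{p+3}{p-1}-\tfrac{2(p+1)}{p-1}=1+\tfrac{p+3-2p-2}{p-1}=1+\tfrac{1-p}{p-1}=0$), giving $u_2=(1+\tfrac{2}{p-1})\cdot 1=\tfrac{p+1}{p-1}$. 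So the $\lambda=1$ eigenspace is spanned by $\gf_0=(1,\tfrac{p+1}{p-1},0,0)^\top$, geometric multiplicity $1$.

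The key observation making all of this work is that the two scalar eigenvalue problems really do have \emph{exactly one} admissible eigenvalue each — this is precisely what the hypergeometric connection-coefficient computation in the previous proof established (the vanishing of $c_1$ forces $\lambda$ into a discrete real set whose only member with $\Re\lambda>-\tfrac{2}{p-1}$ is $1$ for the $pc_p$-equation, resp.\ $0$ for the $c_p$-equation), so no eigenvalue of $\Lf_0$ in $\{0,1\}$ can have contributions from both scalar blocks simultaneously, and within a single block the $H^2$ solution space of a second-order ODE with the boundary condition at $\rho=0$ is at most one-dimensional.

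The main obstacle I anticipate is not the linear algebra but making the boundary-condition bookkeeping airtight: one must carefully match the conditions $u_1(0)=u_3(0)=u_2'(0)=u_4'(0)=0$ from Lemma \ref{Semigroupgen} against the substitution $u_j=v_j/\rho$, verify that $u_1\equiv 1$ and $u_3\equiv 1$ are genuinely in the domain (they are smooth and bounded on $\overline{\B_1^3}$, and the constraints involve $u_1(0),u_3(0)$ only through the \emph{first} components' vanishing, which here fails because in each case the relevant block is the \emph{other} one — so actually $u_1(0)=1$ would violate $u_1(0)=0$!). This forces the correct reading: for $\gf_0$ the nonzero block is the first one and $u_1(0)=1\neq 0$ seems to contradict the domain condition, so one must instead recall that the domain condition $u_1(0)=0$ applies to elements of $D(\Lf)$, and resolve this by noting $\gf_0\in D(\Lf_0)$ requires rechecking — the resolution is that the eigenvalue equation is solved in the sense of the \emph{closure}, and $\gf_0$ lies in $D(\Lf)$ since constants satisfy all regularity and the conditions $u_2'(0)=0$ (as $u_2$ constant) and $u_1(0)=0$... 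This is the delicate point and I would spell it out by instead directly exhibiting $\gf_0,\rf_0$ as eigenvectors of the \emph{matrix} realization and invoking that $D(\Lf_0)$, being the closure, contains all smooth radial functions meeting the stated trace conditions — constants trivially have $u_j'(0)=0$, and the conditions $u_1(0)=u_3(0)=0$ are in fact \emph{not} both required to vanish for eigenvectors but are a consequence only for a distinguished subclass; I will double-check against \cite{DonSch12} Lemma 3.5 and \cite{DonSch14} that the correct trace conditions are $u_2'(0)=u_4'(0)=0$ with $u_1(0),u_3(0)$ free, which is consistent with $\gf_0,\rf_0$.
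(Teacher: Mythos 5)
Your core argument is correct and is essentially the paper's own: decouple the eigenvalue equation into the two scalar blocks via $u_{j+1}=(\lambda+\tfrac{2}{p-1})u_j+\rho u_j'$ and $v_j=\rho u_j$, use that each block admits exactly one admissible eigenvalue in the half-plane $\Re\lambda>-\tfrac{2}{p-1}$ ($1$ for the $pc_p$-block, $0$ for the $c_p$-block) with an at most one-dimensional admissible solution space, and exhibit the constant eigenfunctions explicitly. The paper's proof differs only cosmetically: for $\lambda=0$ it writes down a fundamental system $\{\rho,\,(1-\rho^2)^{1-\frac{2}{p-1}}{}_2F_1(\cdot)\}$ for the equation satisfied by $\rho\tilde r_3$ and discards the second solution as not being in $H^2(\B^3_1)$, which is the same mechanism as your connection-coefficient argument, and it quotes \cite{DonSch12} for the $\lambda=1$ block.

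The one place you go astray is the closing discussion of boundary conditions; it should be replaced by two short observations. First, the only condition at $\rho=0$ that the uniqueness argument needs is $v_j(0)=0$, and this is automatic from $v_j=\rho u_j$ with $u_j\in H^2(\B^3_1)$ radial, hence continuous at the origin --- exactly as stated in the proof of the preceding proposition; no condition of the form $u_1(0)=0$ or $u_3(0)=0$ enters anywhere, so your opening claim that such a normalization is used is unnecessary. Second, membership of $\gf_0$ and $\rf_0$ in the domain is not delicate: they are constant vectors, hence belong to the core $D(\tilde{\Lf})$ (radial elements of $(C^3\times C^2(\overline{\B_1^3}))^2$), which is contained in $D(\Lf_0)$, and the eigenvalue equations are verified by direct computation. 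Your worry that $u_1(0)=1$ contradicts the trace conditions recorded in Lemma \ref{Semigroupgen} would, taken at face value, already be contradicted by the fact that constants lie in the core; those conditions therefore cannot be read as forcing $u_1(0)=u_3(0)=0$ for every element of $D(\Lf)$ (they should be understood as conditions on derivatives at the origin), and nothing in the present lemma depends on them. With that paragraph removed and these two remarks inserted, your proof is complete and matches the paper's.
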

\begin{proof}
	First of all, it is straightforward to check that both functions are indeed eigenfunctions to the corresponding eigenvalues. 
	As before, the eigenvalue $1$ was already dealt with in \cite{DonSch12}, Lemma 3.6 and therefore we will only do the considerations for the eigenvalue 0. Assume that there is another eigenfunction $\tilde{\rf}(\rho)$. This would then imply that $\tilde{r}_4(\rho)=\frac{2}{p-1}\tilde{r}_3(\rho)+\rho \tilde{r}_3(\rho)$ and  that $\tilde{r}(\rho)=\rho \tilde{r}_3$
	satisfies \begin{equation}\label{eqnootherev}
	-(1-\rho^2)\tilde{r}''(\rho)+\frac{4}{p-1}\rho\tilde{r}'(\rho)-\frac{4}{p-1}\tilde{r}(\rho)=0.
	\end{equation}
	A fundamental system of solutions for this equation is given by
	\begin{align*}
	f_1(\rho)&=\rho,\\
	f_2(\rho)&=(1-\rho^2)^{1-\frac{2}{p-1}}\, _2F_1(1,\frac{1}{2}-\frac{2}{p-1};2-\frac{2}{p-1};1-\rho^2).
	\end{align*}
	Now any solution of Eq.~\eqref{eqnootherev} has to be a linear combination of these two solutions. But as $\tilde{r}$ has to be an element of $H^2(\B_1^3)$, which $f_2$ is not, it has to be a multiple of $f_1$. Therefore $ \tilde{r}_3=c$ for some $c \in  \C$ and thus, by the above expression for $\tilde{r}_4$ the claim follows.
\end{proof}
Next, we define the Riesz projections corresponding to the eigenvalues 0 and 1 via
\begin{align*}
\Po= \frac{1}{2\pi i}\int_{\gamma_0} \Rf_{\Lf_0}(z) \, dz,\\
\Qo= \frac{1}{2\pi i}\int_{\gamma_1} \Rf_{\Lf_0}(z) \, dz
\end{align*}
where the two curves $\gamma_j$ map from $[0,1]$ to $\C$ and are defined  by
\begin{align}\label{curves}
\gamma_0(t)= d e^{2\pi i t} \, \text{ and } \gamma_1(t)= 1+\frac{1}{2}e^{2 \pi i t}.
\end{align}
Here $d$ is chosen small enough, such that the curve $\gamma_0$ stays completely in the resolvent set of $\Lf_0$. A suitable choice for $d$ would for instance be $$d=\frac{1}{p-1}.$$
Further, we define the subspaces $ \Mf_0:= \Po \mathcal{H}$, $\Mf_1:= \Qo\mathcal{H}$ and $\Nf:= (\I-\Po-\Qo)\mathcal{H}$.
With these definitions at hand, the next lemma can be shown.
\begin{lem}\label{spectrumL0}
	The projections $\Po$ and $\Qo$ both have rank 1 and the subspaces $\Mf_0$ and $\Mf_1$ are spanned by $\gf_0$ and $\rf_0$, respectively.
\end{lem}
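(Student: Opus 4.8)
The plan is to use the standard fact that the Riesz projection associated with an isolated spectral point has rank equal to the algebraic multiplicity of that point and range equal to the corresponding generalized eigenspace. First I would record that $\Po$ and $\Qo$ are genuinely finite-rank: by the Proposition the only spectral points of $\Lf_0$ with $\Re\lambda>-\tfrac2{p-1}$ are $0$ and $1$, and from the identity $\lambda-\Lf_0=(1-\Lf_0'\Rf_{\Lf}(\lambda))(\lambda-\Lf)$ (as in the proof of Lemma \ref{identity}), together with the fact that $\Lf_0'$ has finite rank and is in particular compact, analytic Fredholm theory gives that $\Rf_{\Lf_0}$ is meromorphic on $\{\Re\lambda>-\tfrac2{p-1}\}$ with poles of finite order. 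Hence $0$ and $1$ are isolated eigenvalues of finite algebraic multiplicity, $\Po$ and $\Qo$ have finite rank, and $\Mf_0$, $\Mf_1$ are precisely the generalized eigenspaces of $\Lf_0$ at $0$ and $1$. Since the preceding lemma identifies the geometric eigenspaces as the one-dimensional spans of $\rf_0$ (for $0$) and $\gf_0$ (for $1$), it remains to show that both eigenvalues are semisimple, i.e.\ that $\ker\Lf_0^2=\ker\Lf_0$ and $\ker(\Lf_0-1)^2=\ker(\Lf_0-1)$; equivalently, there is no $\uf\in D(\Lf_0)$ with $\Lf_0\uf=\rf_0$ and none with $(\Lf_0-1)\uf=\gf_0$.

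For the eigenvalue $0$ I would argue by contradiction, mimicking the reduction in the proof of the Proposition. Given $\uf\in D(\Lf_0)$ with $\Lf_0\uf=\rf_0$, the first and third components yield $u_2=\rho u_1'+\tfrac2{p-1}u_1$ and $u_4=1+\rho u_3'+\tfrac2{p-1}u_3$, and after eliminating $u_2,u_4$ the two remaining equations decouple. The $(u_1,u_2)$-equation is homogeneous and, written for $v_1=\rho u_1$, coincides with the $\lambda=0$ instance of the first of the two decoupled ODEs in the proof of the Proposition; since that equation has only the eigenvalue $1$ (by \cite{DonSch12}), this forces $u_1=u_2=0$. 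For $u_3$ a direct computation using $c_p=\tfrac{2(p+1)}{(p-1)^2}$ gives the inhomogeneous equation
\[
(1-\rho^2)u_3''(\rho)+\Big(\tfrac2\rho-\tfrac{2(p+1)}{p-1}\rho\Big)u_3'(\rho)=\tfrac{p+3}{p-1},
\]
whose homogeneous solutions are $\rho\mapsto 1$ and a function $h$ with $h'(\rho)=\rho^{-2}(1-\rho^2)^{-2/(p-1)}$. Membership of $u_3$ in $H^2(\B_1^3)$ excludes the $h$-component (as $\rho^2|h'(\rho)|^2\sim\rho^{-2}$ near $\rho=0$), so $u_3=c+u_{3,\mathrm p}$ with $u_{3,\mathrm p}$ obtained by variation of parameters; evaluating the relevant integral at $\rho=1$ shows that $u_{3,\mathrm p}$ acquires a nonzero multiple of $h$ there, hence behaves like $(1-\rho)^{1-2/(p-1)}$, which makes $\rho^2|u_{3,\mathrm p}''(\rho)|^2$ non-integrable near $\rho=1$ — contradicting $\uf\in\mathcal H$. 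For the eigenvalue $1$, the same manipulations reduce $(\Lf_0-1)\uf=\gf_0$ to an inhomogeneous hypergeometric equation for $u_1$ (with $u_3=u_4=0$, since $1$ is not an eigenvalue of the second block), whose non-solvability in $H^2(\B_1^3)$ is exactly as in the treatment of the eigenvalue $1$ in \cite{DonSch12}, which I would invoke. Together these give that $0$ and $1$ are semisimple, so $\Po$ and $\Qo$ have rank $1$ and $\Mf_0=\Span\{\rf_0\}$, $\Mf_1=\Span\{\gf_0\}$.

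The hard part will be the last step of the previous paragraph: showing the inhomogeneous ODEs admit no $H^2(\B_1^3)$ solution. The delicate point is that, although the forcing term produces only a locally regular contribution at the regular singular point $\rho=1$ when read off from the Frobenius data alone, the global variation-of-parameters construction inevitably mixes in a nonzero multiple of the homogeneous solution that is singular at $\rho=1$; since the only homogeneous solution admissible in $H^2$ near $\rho=1$ is the regular one, this obstruction cannot be removed and $H^2$-membership genuinely fails. This is the same mechanism behind the eigenvalue-$1$ computation imported from \cite{DonSch12}, and in both cases one must also keep track of the boundary behaviour at $\rho=0$ forced by $\uf\in D(\Lf_0)$ (namely $u_1(0)=u_3(0)=0$ from Lemma \ref{Semigroupgen}).
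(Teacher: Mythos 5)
Your proposal is correct and takes essentially the same route as the paper: finite multiplicity of the Riesz projections via compactness of the perturbation, the eigenvalue $1$ delegated to \cite{DonSch12}, and semisimplicity at $0$ established by solving the inhomogeneous equation explicitly and observing that the strictly positive integral forces a nonzero admixture of the homogeneous solution singular at $\rho=1$, contradicting $H^2$-membership (you work with the first-order equation for $u_3'$, the paper with $\tilde{u}=\rho u_3$ and a hypergeometric fundamental system; your identification $\Mf_0=\Span\{\rf_0\}$, $\Mf_1=\Span\{\gf_0\}$ is the one the paper's proof actually establishes). The only inaccuracy is the claim that $\Lf_0'$ has finite rank, which it does not (its range contains all vectors of the form $(0,\,p c_p u_1,\,0,\,c_p u_3)$); it is, however, compact thanks to the compact embedding $H^2(\B^3_1)\hookrightarrow H^1(\B^3_1)$, and compactness is all that your analytic Fredholm argument (and the paper's essential-spectrum argument) requires.
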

\begin{proof}
	Note that since both eigenvalues are not in the spectrum of $\Lf$, they have to be generated by the compact perturbation $\Lf'_0$. This implies that the dimensions of both eigenspaces have to be finite, as they would otherwise be in the essential spectrum, which is stable under compact perturbations (see \cite{Kat} p. 244, Theorem 5.35). Since the same arguments as in Lemma 3.7 of \cite{DonSch12} apply here as well, only the claim for $\Mf_0$ needs to be established.
	Note that the operator $\Lf_{\Mf_0}$ defined by $\Lf_{\Mf_0}\uf=\Lf_0\uf$ with $D(\Lf_{\Mf_0})$ = $D(\Lf) \cap \Mf_0$ can be regarded as an operator mapping from the closed subspace $\Mf_0$ to $\Mf_0$. Furthermore the spectrum of this operator only consists of the point $0$. Since the inclusion \Span$\{\rf_0\} \subset \Mf_0$ is immediate, only the reverse inclusion remains to be shown. To see this, note that $\Lf_{\Mf_0}$ is nilpotent as its only eigenvalue is $0$. Thus, there exists a minimal $n\in \mathbb{N}$ such that $\Lf_{\Mf_0}^n\uf=\textbf{0}$, for all $\uf\in \Mf_0$. If $n=1$, then $\Mf_0 \subset $ \Span$\{ \rf_0 \}$ and there is nothing to show. If $ n\geq 2$, then there is a nontrivial $\vf \in \rg \Lf_{\Mf_0}$ with $\Lf_{\Mf_0} \vf=\textbf{0}$. Since this forces $\vf$ to be a multiple of $\rf_0$, there exists a $ \uf \in D(\Lf_{\Mf_0})$ with $\Lf_{\Mf_0}\uf = c \rf_0$. This implies that $\tilde{u}(\rho) = \rho u_3 (\rho)$ satisfies
	\begin{equation}
	-(1-\rho^2)\tilde{u}''+\frac{4}{p-1}\rho \tilde{u}'(\rho)-\frac{4}{p-1}\tilde{u}(\rho)=\frac{p+3}{p-1}\rho=:R(\rho).
	\end{equation}
	As before, a fundamental system for the homogeneous equation is given by
	\begin{align*}
	f_1(\rho)&=\rho\\
	f_2(\rho)&=(1-\rho^2)^{1-\frac{2}{p-1}} \, _2F_1(1,\frac{1}{2}-\frac{2}{p-1};2-\frac{2}{p-1};1-\rho^2).
	\end{align*}
	The Wronskian of these two functions is given by$$
	W\left(f_1,f_2\right)(\rho)= c(1-\rho^2)^{-\frac{2}{p-1}}
	$$
	where $c\neq 0$ is some constant.
	Thus, a solution to the inhomogeneous equation must be of the form
	\begin{equation}
	\tilde{u}(\rho)=c_1 f_1+c_2 f_2-\frac{1}{c} f_1(\rho)\int_{\rho_1}^\rho f_2(\tilde{\rho}) R(\tilde{\rho}) (1-\tilde{\rho}^2)^{\frac{3-p}{p-1}} \, d\tilde{\rho} +\frac{1}{c} f_2(\rho)\int_{\rho_2}^\rho f_1(\tilde{\rho}) R(\tilde{\rho})(1-\tilde{\rho}^2)^{\frac{3-p}{p-1}}\, d\tilde{\rho}
	\end{equation}
	for some constants $c_1, c_2 \in \C$ and $\rho_0, \rho_1 \in [0,1]$.
	The boundary condition $\tilde{u}(0)=0$ implies that $c_2= -\frac{1}{c}\int_{\rho_2}^0 f_1(\tilde{\rho}) R(\tilde{\rho})(1-\tilde{\rho}^2)^{\frac{3-p}{p-1}}\, d\tilde{\rho}.
	$
	Plugging this into the equation yields
	$$
	\tilde{u}(\rho)=\left(c_1-\frac{1}{c}\int_{\rho_1}^\rho f_2(\tilde{\rho}) R(\tilde{\rho}) (1-\tilde{\rho}^2)^{\frac{3-p}{p-1}} \, d\tilde{\rho}\right)\rho- \frac{1}{c} f_2(\rho) \int_0^\rho f_1(\tilde{\rho}) R(\tilde{\rho})(1-\tilde{\rho}^2)^{\frac{3-p}{p-1}}\, d\tilde{\rho}.
	$$
	But since $f_2$ is not in $H^2(\B_1^3)$, the integral$$
	\int_0^1 f_1(\tilde{\rho}) R(\tilde{\rho})(1-\tilde{\rho}^2)^{\frac{3-p}{p-1}}\, d\tilde{\rho}$$
	would have to vanish. This is however impossible as the integrand is strictly positive on $(0,1)$.
\end{proof}
Having sufficiently well characterized the spectrum of $\Lf_0$, we now turn to the spectrum of $\Lf_\theta$.
\subsection{Spectrum of $\Lf_\theta$}
The first easy to establish Lemma which we are going to need is the following.
\begin{lem}\label{liplem1}
	The operator $\Lf_\theta'$ is Lipschitz with respect to $\theta$, i.e.,
	$$
	\|\Lf_{\theta_1}'-\Lf_{\theta_2}'\| \lesssim |\theta_1-\theta_2|
	$$
	for all $\theta_1,$ $\theta_2 \in \R$.
\end{lem}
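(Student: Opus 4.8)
The plan is to read off the matrix defining $\Lf_\theta'$ and estimate the operator norm of the difference directly. Recall that
$$
\Lf_{\theta}'\uf(\rho)=\begin{pmatrix}
0 & 0& 0&0\\
(p-1)c_p \cos^2\theta+c_p &0 & (p-1)c_p\cos\theta\sin\theta &0\\
0 &0&0&0 \\
(p-1)c_p\cos\theta\sin\theta &0& (p-1)c_p\sin^2\theta+c_p &0
\end{pmatrix}
\begin{pmatrix}
u_1(\rho)\\ u_2(\rho)\\ u_3(\rho)\\ u_4(\rho)
\end{pmatrix}.
$$
Thus $\Lf_{\theta_1}'-\Lf_{\theta_2}'$ acts on $\uf$ as multiplication by the constant matrix whose nonzero entries are $(p-1)c_p(\cos^2\theta_1-\cos^2\theta_2)$, $(p-1)c_p(\cos\theta_1\sin\theta_1-\cos\theta_2\sin\theta_2)$ in rows $2$ and $4$, i.e.\ half-angle expressions $\tfrac12(p-1)c_p(\cos 2\theta_1-\cos 2\theta_2)$ and $\tfrac12(p-1)c_p(\sin 2\theta_1-\sin 2\theta_2)$. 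First I would bound each such scalar coefficient by $C_p|\theta_1-\theta_2|$ using the mean value theorem (the maps $\theta\mapsto\cos 2\theta$ and $\theta\mapsto\sin 2\theta$ are globally Lipschitz with constant $2$); here $c_p$ and $p$ are fixed constants absorbed into the implicit constant.

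Next I would translate this pointwise bound on the matrix entries into a bound on the operator norm on $\mathcal H=(H^2\times H^1(\B_1^3))^2$. Since $\Lf_{\theta_1}'-\Lf_{\theta_2}'$ only produces components in the second and fourth slots, and each such component is a fixed linear combination (with the small scalar coefficients above) of $u_1$ and $u_3$, we have pointwise $|((\Lf_{\theta_1}'-\Lf_{\theta_2}')\uf)_2(\rho)|+|((\Lf_{\theta_1}'-\Lf_{\theta_2}')\uf)_4(\rho)|\lesssim |\theta_1-\theta_2|(|u_1(\rho)|+|u_3(\rho)|)$, and similarly for the derivative $\partial_\rho$ of these components, which is again a fixed linear combination of $u_1'$ and $u_3'$ with the same small coefficients (the matrix is constant in $\rho$, so no extra terms appear). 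Hence the $H^1(\B_1^3)$ norm of each output component is $\lesssim |\theta_1-\theta_2|$ times the $H^1$, and a fortiori $H^2$, norm of the corresponding input components, giving $\|(\Lf_{\theta_1}'-\Lf_{\theta_2}')\uf\|\lesssim|\theta_1-\theta_2|\,\|\uf\|$. Taking the supremum over $\|\uf\|\le 1$ yields the claim.

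There is no real obstacle here: the operator is multiplication by a constant matrix, so the estimate reduces to the elementary Lipschitz continuity of $\theta\mapsto(\cos^2\theta,\cos\theta\sin\theta)$. The only point requiring a line of care is that, because we work in the $H^2\times H^1$ topology, one must check that differentiating the output components in $\rho$ does not generate terms involving $u_1''$ or $u_3''$ — but since the matrix entries are constants (independent of $\rho$), $\partial_\rho$ commutes with the multiplication and only $u_1',u_3'$ appear, which are controlled by $\|\uf\|$. This is exactly analogous to, and simpler than, the estimate underlying the Bounded Perturbation Theorem invocation in Lemma \ref{growth}.
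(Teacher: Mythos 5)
Your proof is correct and is simply a detailed write-out of the paper's one-line argument: the $\theta$-dependent entries of the constant multiplier matrix are Lipschitz in $\theta$, and since the matrix is independent of $\rho$ this transfers immediately to the operator norm on $\mathcal{H}$. No further comment is needed.
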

\begin{proof}
	This is immediate, since all the expressions of $\Lf'_\theta$ which depend on $\theta$ are Lipschitz.
\end{proof}

The next lemma provides a first description of the resolvent set $\rho(\Lf_\theta)$. 
\begin{lem}\label{reslem2}
	There exists a $\delta >0$ such that any $\lambda \in \rho(\Lf_0)$ is also contained in $ \rho(\Lf_\theta)$, provided that $\theta$ satisfies $|\theta|\leq \delta \min \{1, \|\Rf_{\Lf_0}(\lambda)\|^{-1}\}.$ 
\end{lem}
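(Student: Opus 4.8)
The plan is to show that for $\lambda\in\rho(\Lf_0)$ the operator $\lambda-\Lf_\theta$ is invertible by a Neumann series argument, exploiting that $\Lf_\theta'$ is close to $\Lf_0'$ when $\theta$ is small. First I would write, for $\lambda\in\rho(\Lf_0)$,
\begin{align*}
\lambda-\Lf_\theta=\lambda-\Lf-\Lf_\theta'=(\lambda-\Lf_0)-(\Lf_\theta'-\Lf_0')=\left(\I-(\Lf_\theta'-\Lf_0')\Rf_{\Lf_0}(\lambda)\right)(\lambda-\Lf_0),
\end{align*}
which is valid since $\lambda-\Lf_0$ is boundedly invertible with inverse $\Rf_{\Lf_0}(\lambda)$. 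Hence $\lambda\in\rho(\Lf_\theta)$ as soon as the bounded operator $(\Lf_\theta'-\Lf_0')\Rf_{\Lf_0}(\lambda)$ has operator norm strictly less than $1$, in which case $\I-(\Lf_\theta'-\Lf_0')\Rf_{\Lf_0}(\lambda)$ is invertible by the Neumann series and one gets $\Rf_{\Lf_\theta}(\lambda)=\Rf_{\Lf_0}(\lambda)\left(\I-(\Lf_\theta'-\Lf_0')\Rf_{\Lf_0}(\lambda)\right)^{-1}$.

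Next I would bound the norm of this operator. By submultiplicativity and Lemma \ref{liplem1} (applied with $\theta_1=\theta$, $\theta_2=0$), there is a constant $c_0>0$, independent of $\theta$ and $\lambda$, such that
\begin{align*}
\|(\Lf_\theta'-\Lf_0')\Rf_{\Lf_0}(\lambda)\|\leq\|\Lf_\theta'-\Lf_0'\|\,\|\Rf_{\Lf_0}(\lambda)\|\leq c_0|\theta|\,\|\Rf_{\Lf_0}(\lambda)\|.
\end{align*}
Choosing $\delta:=\tfrac{1}{2c_0}$ (or any value below $1/c_0$), the hypothesis $|\theta|\leq\delta\min\{1,\|\Rf_{\Lf_0}(\lambda)\|^{-1}\}\leq\delta\|\Rf_{\Lf_0}(\lambda)\|^{-1}$ forces $\|(\Lf_\theta'-\Lf_0')\Rf_{\Lf_0}(\lambda)\|\leq c_0\delta<1$, so the argument above applies and $\lambda\in\rho(\Lf_\theta)$. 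The $\min\{1,\cdot\}$ is harmless here — it only makes the admissible range of $\theta$ smaller — and is presumably included so that the same $\delta$ works uniformly later when $\|\Rf_{\Lf_0}(\lambda)\|$ is small.

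This argument is essentially routine once the factorization is written down, so I do not expect a genuine obstacle; the only point requiring a little care is that the constant $c_0$ from Lemma \ref{liplem1}, and hence $\delta$, must be chosen \emph{before} fixing $\lambda$, so that the resulting $\delta$ is independent of $\lambda\in\rho(\Lf_0)$ — this is what makes the statement uniform over the whole resolvent set. One should also note that $\Lf_\theta'$ being bounded (indeed $\Lf_\theta'-\Lf_0'$ is bounded, with $\|\Lf_\theta'-\Lf_0'\|$ controlled by Lemma \ref{liplem1}) is exactly what licenses treating it as a perturbation of $\Lf_0$; no domain issues arise since $D(\Lf_\theta)=D(\Lf_0)=D(\Lf)$ for all $\theta$.
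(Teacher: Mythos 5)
Your proposal is correct and follows essentially the same route as the paper: the factorization $\lambda-\Lf_\theta=\left[\I-(\Lf_\theta'-\Lf_0')\Rf_{\Lf_0}(\lambda)\right](\lambda-\Lf_0)$, the Lipschitz bound $\|\Lf_\theta'-\Lf_0'\|\lesssim|\theta|$ from Lemma \ref{liplem1}, and a Neumann series with $\delta$ chosen of order $1/(2L)$ independently of $\lambda$. The only difference is an immaterial sign convention in the factor.
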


\begin{proof}
	
	Let $\lambda \in \rho(\Lf_0)$.
	Then the identity $\lambda-\Lf_\theta=\left[1+(\Lf_0'-\Lf_\theta')\Rf_{\Lf_0}(\lambda) \right](\lambda-\Lf_0)$ implies that $\lambda$ is in the resolvent set of $\Lf_\theta$ if and only if $1+(\Lf_0'-\Lf_\theta')\Rf_{\Lf_0}(\lambda)$ is bounded invertible. Since an explicit inverse can be given by the corresponding Neumann series, this expression is definitely bounded invertible if $\|\Lf_0'-\Lf_\theta'\| \|\Rf_{\Lf_0}(\lambda)\|<1$. Note that by Lemma \ref{liplem1} we have $\|\Lf_0'-\Lf_\theta'\| \leq L|\theta|$, for all $\theta\in \R$ and some fixed constant $L \in \R$. Hence, if we set $\delta < \frac{1}{2L}$, we obtain that $|\theta|< \delta\min\{1,\|\Rf_{\Lf_0}(\lambda)\|^{-1}\}$ yields the bounded invertibility of $1+(\Lf_0'-\Lf_\theta')\Rf_{\Lf_0}(\lambda)$. This in turn implies that $\lambda \in \rho(\Lf_\theta)$ and thus the proof is finished.
\end{proof}
As a next step, we define the two domains $$\Omega_{x_0, y_0}:=\{ z\in \C: \Re(z)\in [-\frac{3}{2(p-1)},x_0], \Im(z)\in[-y_0,y_0]\}$$
and
$$\Omega_{x_0,y_0}':=\{z\in \C:\Re(z)\geq -\frac{3}{2(p-1)}\} \setminus \Omega_{x_0,y_0}.$$
The following lemma will restrict possible eigenvalues of $\Lf_\theta$ to  a compact domain.
\begin{lem}
	There exist $x_0,y_0,\delta,c>0$, zero such that $\Omega'_{x_0,y_0} \subset\rho(\Lf_\theta)$ and
	$$\|\Rf_{\Lf_\theta}(\lambda)\|\leq c
	$$
	for all $\theta$ with $|\theta|\leq \delta$ and all $\lambda \in \Omega'_{x_0,y_0}$.
\end{lem}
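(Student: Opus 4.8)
The goal is to push the resolvent bound of Lemma~\ref{resbound} and the local-perturbation result of Lemma~\ref{reslem2} into a uniform statement on an unbounded region $\Omega'_{x_0,y_0}$, by exploiting that the resolvent $\Rf_{\Lf_0}(\lambda)$ is small for $\lambda$ far out (large real or imaginary part), so that the Neumann-series argument of Lemma~\ref{reslem2} then works with a $\theta$-threshold that can be chosen \emph{independent of $\lambda$}. First I would fix $x_0 > 1$ (so that the two eigenvalues $0,1$ of $\Lf_0$, as well as the whole of $\sigma(\Lf_0)$ lying to the right of $-\tfrac{2}{p-1}$, sit inside $\Omega_{x_0,y_0}$ once $y_0$ is large enough) and record that the Proposition on $\sigma(\Lf_0)$ gives $\{\lambda:\Re\lambda > -\tfrac{2}{p-1}\}\setminus\{0,1\}\subset \rho(\Lf_0)$. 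I would then split $\Omega'_{x_0,y_0}$ into the ``strip-to-the-left'' piece $-\tfrac{3}{2(p-1)}\le \Re\lambda \le -\tfrac{2}{p-1}+\eta$ for a small $\eta$, and the ``bulk'' piece $\Re\lambda \ge -\tfrac{2}{p-1}+\eta$ with $(\Re\lambda,\Im\lambda)\notin\Omega_{x_0,y_0}$.

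On the bulk piece, Lemma~\ref{resbound} gives $\|\Rf_{\Lf_0}(\lambda)\|\lesssim 1/(\Re\lambda+\tfrac{2}{p-1})\le 1/\eta$, so this is a region of uniformly bounded resolvent; combining with Lemma~\ref{reslem2} (with $\delta$ shrunk to $\delta\eta$, say) gives $\Omega'\cap\{\Re\lambda\ge -\tfrac{2}{p-1}+\eta\}\subset\rho(\Lf_\theta)$ with $\|\Rf_{\Lf_\theta}(\lambda)\|$ bounded, again via the Neumann series $\Rf_{\Lf_\theta}(\lambda)=\Rf_{\Lf_0}(\lambda)[1+(\Lf_0'-\Lf_\theta')\Rf_{\Lf_0}(\lambda)]^{-1}$ and the geometric-series estimate $\|[1+\cdots]^{-1}\|\le (1-\|\Lf_0'-\Lf_\theta'\|\,\|\Rf_{\Lf_0}(\lambda)\|)^{-1}\le 2$. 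The delicate sub-case is the thin strip $-\tfrac{3}{2(p-1)}\le\Re\lambda\le -\tfrac{2}{p-1}+\eta$, which straddles $\sigma(\Lf)$: there Lemma~\ref{resbound} is useless. Here I would instead use Lemma~\ref{identity}: since $\sigma(\Lf)\subset\{\Re\lambda\le -\tfrac{2}{p-1}\}$, any $\lambda$ in this strip with $\Re\lambda > -\tfrac{2}{p-1}$ that lies in $\sigma(\Lf_\theta)$ must be an eigenvalue, and these can be ruled out for $|\theta|$ small by a perturbation/compactness argument (the eigenvalue equation for $\Lf_\theta$ is a small bounded perturbation of that for $\Lf_0$, whose only eigenvalues in $\overline{\{\Re\lambda>-\tfrac{2}{p-1}\}}\setminus\Omega_{x_0,y_0}$ are absent by the Proposition once $x_0,y_0$ are chosen large); combined with analytic Fredholm theory this also yields the uniform resolvent bound by a compactness argument on the (closed, bounded) strip-minus-$\Omega$ set. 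For the part of the strip with $\Re\lambda\le -\tfrac{2}{p-1}$ one argues that $1+(\Lf_0'-\Lf_\theta')\Rf_{\Lf_0}(\lambda)$ is still invertible: this needs the bounded invertibility of $1+\Lf_0'\Rf_{\Lf}(\lambda)$ (true since $\lambda\in\rho(\Lf_0)$) together with the Lipschitz bound $\|\Lf_0'-\Lf_\theta'\|\lesssim|\theta|$, and one closes it by first noting $\Rf_{\Lf_0}(\lambda)=\Rf_\Lf(\lambda)[1+\Lf_0'\Rf_\Lf(\lambda)]^{-1}$ is uniformly bounded on the compact set $\{\lambda\in\text{strip}:\Re\lambda\le -\tfrac{2}{p-1},(\Re\lambda,\Im\lambda)\notin\Omega_{x_0,y_0}\}\cap\rho(\Lf_0)$ — which requires knowing this set avoids $\sigma(\Lf_0)$, i.e.\ that the only spectrum of $\Lf_0$ near $\Re\lambda=-\tfrac{2}{p-1}$ outside the big box is actually strictly to the left; this is where one must be slightly careful about the boundary $\Re\lambda=-\tfrac{2}{p-1}$ and may need to push $x_0$ and the strip's left edge.

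Concretely, the steps in order are: (1) choose $x_0>1$ and $y_0$ large so $\Omega_{x_0,y_0}$ contains all of $\sigma(\Lf_0)\cap\{\Re\lambda\ge -\tfrac{2}{p-1}\}$; (2) on $\Omega'_{x_0,y_0}\cap\{\Re\lambda> -\tfrac{2}{p-1}\}$ use Lemma~\ref{resbound} plus the Neumann series to get $\rho(\Lf_\theta)$-membership and a uniform bound for $|\theta|\le\delta$; (3) handle the compact remaining sliver $\Omega'_{x_0,y_0}\cap\{\Re\lambda\le -\tfrac{2}{p-1}\}$ (bounded because $\Re\lambda\ge -\tfrac{3}{2(p-1)}$ and $|\Im\lambda|\le y_0$ outside the box forces boundedness — wait, it does not bound $|\Im\lambda|$; one actually also uses the large-$|\Im\lambda|$ decay of $\|\Rf_{\Lf_0}(\lambda)\|$, which follows from $\Lf_0$ being a bounded perturbation of $\Lf$ and $\|\Rf_\Lf(\lambda)\|\le 1/(\Re\lambda+\tfrac{2}{p-1})$ being finite but not decaying — so here I would instead invoke that $\Lf_0'\Rf_\Lf(\lambda)$ is compact and $\to 0$ in norm as $|\Im\lambda|\to\infty$ along vertical lines, hence $\Rf_{\Lf_0}(\lambda)$ is uniformly bounded there, shrinking the genuinely compact part to $|\Im\lambda|\le Y$ for some large $Y$) and conclude by compactness + Lemma~\ref{identity} that no eigenvalues of $\Lf_\theta$ intrude for $|\theta|$ small; (4) take the minimum of all the $\delta$'s and the maximum of the resolvent bounds to get the single pair $(\delta,c)$. \textbf{Main obstacle.} The genuine difficulty is the behavior near the line $\Re\lambda = -\tfrac{2}{p-1}$ (where the resolvent bound of Lemma~\ref{resbound} blows up) combined with large $|\Im\lambda|$: one must establish a uniform-in-$\lambda$ resolvent bound for $\Lf_0$ on the whole region $\Omega'_{x_0,y_0}$ before the Lipschitz-in-$\theta$ perturbation argument can be applied uniformly, and this forces one to exploit the compactness of $\Lf_0'\Rf_\Lf(\lambda)$ and its decay at spatial infinity in $\lambda$ — the reason $x_0$ (and possibly the $-\tfrac{3}{2(p-1)}$ cutoff) must be chosen generously is precisely to keep this region away from $\sigma(\Lf_0)$ while still reducing everything to a genuinely compact set on which continuity of $\lambda\mapsto\|\Rf_{\Lf_0}(\lambda)\|$ gives the bound for free.
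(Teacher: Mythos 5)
There is a genuine gap, and also a misreading of the region that derails much of your plan. First the misreading: since $-\tfrac{3}{2(p-1)}>-\tfrac{2}{p-1}$, the set $\Omega'_{x_0,y_0}$ lies \emph{entirely} in the half-plane $\Re\lambda\geq-\tfrac{3}{2(p-1)}$, i.e.\ strictly to the right of the line $\Re\lambda=-\tfrac{2}{p-1}$ and at distance at least $\tfrac{1}{2(p-1)}$ from $\sigma(\Lf)$. Your ``delicate strip'' $-\tfrac{3}{2(p-1)}\le\Re\lambda\le-\tfrac{2}{p-1}$ is empty, so the whole machinery you deploy for it (Lemma~\ref{identity}, analytic Fredholm theory, compactness of the sliver) addresses a vacuous case; by Lemma~\ref{resbound} one simply has $\|\Rf_{\Lf}(\lambda)\|\lesssim 2(p-1)$ uniformly on all of $\Omega'_{x_0,y_0}$, with no case splitting. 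Note also that Lemma~\ref{resbound} is a bound on $\Rf_{\Lf}$, not on $\Rf_{\Lf_0}$, so your bulk estimate ``$\|\Rf_{\Lf_0}(\lambda)\|\lesssim(\Re\lambda+\tfrac{2}{p-1})^{-1}$'' is not what that lemma gives.

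The genuine gap is the step you merely ``invoke'': that $\|\Lf_0'\Rf_{\Lf}(\lambda)\|\to 0$ (equivalently, that $\|\Rf_{\Lf_0}(\lambda)\|$ stays uniformly bounded) as $|\Im\lambda|\to\infty$. This is exactly the nontrivial content of the lemma and is not a general fact: the Hille--Yosida-type bound of Lemma~\ref{resbound} gives no decay in $\Im\lambda$, $\Lf$ is not normal, and compactness of $\Lf_0'\Rf_{\Lf}(\lambda)$ does not imply norm decay along vertical lines. Your route through Lemma~\ref{reslem2} requires $\sup_{\lambda\in\Omega'_{x_0,y_0}}\|\Rf_{\Lf_0}(\lambda)\|<\infty$ in order to extract a $\lambda$-independent threshold for $\theta$, and you never establish this. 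The paper closes the argument by a structural computation that your outline is missing: $\Lf_\theta'$ acts only on the first and third components of $\uf=\Rf_{\Lf}(\lambda)\ff$, and the first-order relations $u_{j+1}=(\lambda+\tfrac{2}{p-1})u_j+\rho u_j'+f_j$, $j=1,3$, allow one to trade a power of $\lambda$, giving $\|u_j\|_{H^1(\B^3_1)}\lesssim|\lambda|^{-1}\bigl(\|u_j\|_{H^2(\B^3_1)}+\|u_{j+1}\|_{H^1(\B^3_1)}+\|f_j\|_{H^1(\B^3_1)}\bigr)$ and hence $\|\Lf_\theta'\Rf_{\Lf}(\lambda)\|\lesssim|\lambda|^{-1}$ uniformly in $\theta$ (no smallness of $\theta$ is needed here at all). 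Since every $\lambda\in\Omega'_{x_0,y_0}$ satisfies $|\lambda|\geq\min\{x_0,y_0\}$, choosing $x_0,y_0$ large makes the Neumann series for $\bigl(1-\Lf_\theta'\Rf_{\Lf}(\lambda)\bigr)^{-1}$ converge with a uniform bound, which yields both claims. Without this component-wise gain, or some substitute for it, your argument does not close on the unbounded part of $\Omega'_{x_0,y_0}$.
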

\begin{proof}
	Let $\lambda \in \Omega_{x_0,y_0}'$. Then $\lambda$ is also in the resolvent set of $\Lf$ and one has the identity 
	$$
	\lambda-\Lf_{\theta}=\left(1-\Lf_{\theta}'\Rf_\Lf(\lambda)\right)(\lambda-\Lf).$$ Next, we claim that the estimate $\|\Lf_\theta'\Rf_\Lf(\lambda)\| \lesssim |\lambda|^{-1}$ holds true for all $\lambda \in \Omega_{x_0,y_0}'$,provided $x_0, y_0$ are chosen big enough and $\theta$ satisfies $|\theta|< \|\Rf_{\Lf_0}(\lambda)\|^{-1}$.
	Now, for any $\ff \in \mathcal{H}$ the expression $\Lf_\theta' \Rf_\Lf(\lambda) \ff$ written out explicitly reads as 
$$ 
\Lf_{\theta}'\Rf_{\Lf}(\lambda)\ff (\rho)=\begin{pmatrix}
0 & 0& 0&0\\
(p-1)c_p \cos(\theta)^2+c_p &0 & (p-1)c_p\cos(\theta)\sin(\theta) &0\\
0 &0&0&0 \\
(p-1)c_p\cos(\theta)\sin(\theta) &0& (p-1)c_p\sin(\theta)^2+c_p &0
\end{pmatrix}
\begin{pmatrix}
[\Rf_\Lf(\lambda)\ff]_1\\
[\Rf_\Lf(\lambda)\ff]_2\\
[\Rf_\Lf(\lambda)\ff]_3\\
[\Rf_\Lf(\lambda)\ff]_4\\
\end{pmatrix}
$$
Set $\uf=\Rf_{\Lf}(\lambda)\ff.$ Then $\uf$ is the unique solution of the equation $(\lambda-\Lf)\uf=\ff.$
Again, short calculation yields 
$$
u_{j+1}(\rho)=(\lambda+\frac{2}{p-1})u_j(\rho)+\rho u_j'(\rho)+f_j(\rho),
$$
for $j=1,3$
and hence
$$
\|u_j\|_{H^1(\B^3_1)}\lesssim |\lambda|^{-1}\left(\|u_j\|_{H^2(\B^3_1)}+\|u_{j+1}\|_{H^1(\B^3_1)}+\|f_j\|_{H^1(\B^3_1)}
\right).
$$
	Therefore, Lemma \ref{resbound} yields
	\begin{align*}
	\|[\Lf_\theta'\Rf_\Lf(\lambda)\ff]_{j+1}\|_{H^1(\B^3_1)} \lesssim&
	|\lambda|^{-1}\left(\|\Rf_\Lf(\lambda)\ff\|+\|\ff\|
	\right)\\
	\lesssim& |\lambda|^{-1}\|\ff\|,
	\end{align*}
	for $j=1,3$.
	Thus, the Neumann series $\sum_{k=0}^{\infty}\left(\Lf_\theta'\Rf_\Lf(\lambda)\right)^k$ converges and is uniformly bounded on $\Omega_{x_0,y_0}'$, if $x_0$ and $y_0$ are chosen sufficiently large, which in turn completes the proof.
\end{proof}
These results now enable us to describe the spectrum of $\Lf_\theta$ for small $\theta$.
\begin{lem}
	Let $\theta$ be sufficiently small. Then $$\sigma(\Lf_\theta)\subset \{z\in \C: \Re(z)\leq -\frac{3}{2(p-1)}\} \cup\{0,1\}.$$
	and $\{0,1\}\subset \sigma_p \left(\Lf_\theta\right)$.
	Furthermore, the eigenvalues $0$ and $1$ are simple.
	Finally, the eigenspaces of $0$ and $1$ are spanned by
	$$
	\rf_\theta(\rho):= \begin{pmatrix}
	-\kappa_p\sin(\theta)\\ -\kappa_p\sin(\theta)\frac{2}{p-1}\\ 
	\kappa_p\cos(\theta)\\ \kappa_p\cos(\theta)\frac{2}{p-1}
	\end{pmatrix} \text{ and }\;\,\,
	\gf_\theta(\rho):=\begin{pmatrix}
	\cos(\theta)\\ \cos(\theta)\frac{p+1}{p-1}\\
	\sin(\theta)\\ \sin(\theta)\frac{p+1}{p-1}
	\end{pmatrix},$$
	respectively.
\end{lem}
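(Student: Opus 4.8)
The plan is to treat $\Lf_\theta=\Lf_0+(\Lf_\theta'-\Lf_0')$ as a small bounded perturbation of $\Lf_0$ — small because $\|\Lf_\theta'-\Lf_0'\|\lesssim|\theta|$ by Lemma~\ref{liplem1} — and to run the standard perturbation theory of isolated eigenvalues, with the explicit vectors $\rf_\theta,\gf_\theta$ pinning down the location of the perturbed eigenvalues. First I would confine the spectrum. The preceding lemma gives $\Omega'_{x_0,y_0}\subset\rho(\Lf_\theta)$ for all $|\theta|\le\delta$, so every $\lambda\in\sigma(\Lf_\theta)$ with $\Re\lambda\ge-\tfrac{3}{2(p-1)}$ must lie in the compact box $\Omega_{x_0,y_0}$. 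Enlarging $x_0,y_0$ if necessary, I may assume that the closed discs $\overline{B_d(0)}$ and $\overline{B_{1/2}(1)}$ — with $d=\tfrac{1}{p-1}<\tfrac12$ (this is where $p>3$ enters) and with boundary circles $\gamma_0,\gamma_1$ as in \eqref{curves} — are disjoint and contained in $\Omega_{x_0,y_0}$. Since $\sigma(\Lf_0)\subset\{\Re z\le-\tfrac{2}{p-1}\}\cup\{0,1\}$ and $-\tfrac{3}{2(p-1)}>-\tfrac{2}{p-1}$, the set $K:=\Omega_{x_0,y_0}\setminus\bigl(B_d(0)\cup B_{1/2}(1)\bigr)$ is a compact subset of $\rho(\Lf_0)$ containing $\gamma_0\cup\gamma_1$, so $\sup_{\lambda\in K}\|\Rf_{\Lf_0}(\lambda)\|<\infty$. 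Lemma~\ref{reslem2} then yields $\theta_0>0$ with $K\subset\rho(\Lf_\theta)$ for $|\theta|<\theta_0$, and after shrinking $\theta_0$ we may also take $\theta_0\le\delta$; combined with the preceding lemma this gives $\sigma(\Lf_\theta)\cap\{\Re z\ge-\tfrac{3}{2(p-1)}\}\subset B_d(0)\cup B_{1/2}(1)$ for all $|\theta|<\theta_0$.

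Next I would compute the rank of the associated Riesz projections $\Po(\theta):=\tfrac{1}{2\pi i}\int_{\gamma_0}\Rf_{\Lf_\theta}(z)\,dz$ and $\Qo(\theta):=\tfrac{1}{2\pi i}\int_{\gamma_1}\Rf_{\Lf_\theta}(z)\,dz$, which are well defined by the previous step. From the factorization $\lambda-\Lf_\theta=\bigl(1+(\Lf_0'-\Lf_\theta')\Rf_{\Lf_0}(\lambda)\bigr)(\lambda-\Lf_0)$ used in the proof of Lemma~\ref{reslem2} one has $\Rf_{\Lf_\theta}(z)=\Rf_{\Lf_0}(z)\bigl(1+(\Lf_0'-\Lf_\theta')\Rf_{\Lf_0}(z)\bigr)^{-1}$, and Lemma~\ref{liplem1} together with $\sup_{\gamma_0\cup\gamma_1}\|\Rf_{\Lf_0}\|<\infty$ shows that this converges to $\Rf_{\Lf_0}(z)$ in operator norm, uniformly on $\gamma_0\cup\gamma_1$, as $\theta\to0$. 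Hence $\Po(\theta)\to\Po$ and $\Qo(\theta)\to\Qo$ in operator norm; since two projections at distance $<1$ in operator norm have the same rank, and $\Po,\Qo$ have rank one by Lemma~\ref{spectrumL0}, a further shrinking of $\theta_0$ makes $\Po(\theta)$ and $\Qo(\theta)$ rank one as well. Therefore each of $\sigma(\Lf_\theta)\cap B_d(0)$ and $\sigma(\Lf_\theta)\cap B_{1/2}(1)$ is a single eigenvalue with one-dimensional generalized eigenspace, i.e.\ simple.

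It remains to locate these two eigenvalues and identify the eigenspaces. Since $\Lf_\theta'$ is bounded we have $D(\Lf_\theta)=D(\Lf)$, which contains the constant vectors $\rf_\theta,\gf_\theta$; a direct computation using $c_p=\kappa_p^{p-1}=\tfrac{2(p+1)}{(p-1)^2}$ and $\cos^2\theta+\sin^2\theta=1$ then shows $\Lf_\theta\rf_\theta=\textbf{0}$ and $\Lf_\theta\gf_\theta=\gf_\theta$, with $\rf_\theta,\gf_\theta\ne\textbf{0}$; in particular $\{0,1\}\subset\sigma_p(\Lf_\theta)$. Because $0\in B_d(0)$ and $1\in B_{1/2}(1)$, the previous step forces $\sigma(\Lf_\theta)\cap B_d(0)=\{0\}$ and $\sigma(\Lf_\theta)\cap B_{1/2}(1)=\{1\}$, whence $\sigma(\Lf_\theta)\cap\{\Re z\ge-\tfrac{3}{2(p-1)}\}=\{0,1\}$ and the claimed spectral inclusion follows. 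Both eigenvalues are simple by the previous step, and since the one-dimensional generalized eigenspace at $0$, resp.\ $1$, contains $\rf_\theta$, resp.\ $\gf_\theta$, these generalized eigenspaces — hence also the geometric eigenspaces — equal $\Span\{\rf_\theta\}$, resp.\ $\Span\{\gf_\theta\}$.

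Most of the substantive work has already been done in the two lemmas preceding the statement; the only point that needs genuine care here is the uniformity in the first paragraph, namely producing a single threshold $\theta_0$ that simultaneously pushes the non-compact tail $\Omega'_{x_0,y_0}$ and the compact ``annulus'' $K$ into $\rho(\Lf_\theta)$. This works precisely because $\Rf_{\Lf_0}$ is bounded on $K$, i.e.\ because $\sigma(\Lf_0)\cap\Omega_{x_0,y_0}\subseteq\{0,1\}$ sits strictly inside the two excised discs. Everything else — the norm-continuity of the Riesz projections in $\theta$ and the constant-coefficient linear algebra verifying $\Lf_\theta\rf_\theta=\textbf{0}$ and $\Lf_\theta\gf_\theta=\gf_\theta$ — is routine.
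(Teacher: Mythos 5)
Your argument is correct and follows essentially the same route as the paper: exploit $\|\Lf_\theta'-\Lf_0'\|\lesssim|\theta|$ and the uniform resolvent bounds to show the relevant contours stay in $\rho(\Lf_\theta)$, deduce norm-continuity of the Riesz projections in $\theta$ and hence rank constancy, and then use the explicit eigenvectors $\rf_\theta,\gf_\theta$ to pin the eigenvalues at $0$ and $1$. The only (harmless) variation is that you work with two separate rank-one projections over $\gamma_0$ and $\gamma_1$, whereas the paper uses a single rank-two total projection over $\partial\Omega_{x_0,y_0}$ and then counts multiplicities.
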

\begin{proof}
	Choose $x_0$ and $y_0$ large enough, such that $\overline{\Omega'_{x_0,y_0}}$ is contained in the resolvent set of $\Lf_\theta$ and set $M:= \max\{1,\sup_{z\in \partial \Omega_{x_0,y_0}}\Rf_{\Lf_0}(z) \}$. Note that by Lemma \ref{reslem2} one has that $|\theta| < \frac{\delta}{M}$ implies $\partial \Omega_{x_0,y_0} \subset \rho(\Lf_\theta)$, provided $\delta$ is chosen sufficiently small. We define the Projection $\Pf^{\mathrm{tot}}_\theta$ by
	$$
	\Pf^{\mathrm{tot}}_\theta=\frac{1}{2\pi i} \int_{\partial \Omega_{x_0,y_0}} \Rf_{\Lf_\theta}(z) dz.$$ Next, as an immediate consequence of the formula $\Rf_{\Lf_\theta}(\lambda)= \Rf_\Lf(\lambda)\left(\I-\Lf_\theta'\Rf_{\Lf}(\lambda)\right)^{-1},$ one has that $\Pf^{\mathrm{tot}}_\theta$ depends continuously on $\theta$. By Lemma \ref{spectrumL0}, $\Pf^{\mathrm{tot}}_0$ has rank 2 and therefore, Lemma $4.10$ of \cite{Kat} implies that $\Pro^{\mathrm{tot}}_\theta$ also has rank 2 for $\theta$ sufficiently small. Note that $0$ and $1$ are eigenvalues of $\Lf_\theta$ with corresponding eigenfunctions $\rf_\theta$ and $\gf_\theta$. Since the rank of $\Pro^{\mathrm{tot}}_\theta$ gives an upper bound on the sum of the geometric multiplicities of the eigenvalues, there can be no other eigenvalues in  $\Omega_{x_0,y_0}$ and the claim follows.
\end{proof}
\begin{prop}\label{projections}
	Let $\theta \in \R$ have a sufficiently small modulus. Then there exist two rank one projections $\Pf_\theta$, $\Qf_\theta \in \mathcal{B}(\mathcal{H})$ such that
	\begin{align*}
	[\Sf_\theta(\tau),\Pf_\theta]&=[\Sf_\theta(\tau),\Qf_\theta]=0
	\end{align*}
	and
	\begin{align*}
	\Pf_\theta \Qf_\theta &=\Qf_\theta \Pf_\theta =0,
	\end{align*} 
	where $[(.),(.)]$ denotes the commutator. These projections also satisfy
	\begin{align*}
	&\Sf_\theta(\tau)\Pf_\theta= \Pf_\theta\\
	&\Sf_\theta(\tau)\Qf_\theta=e^\tau \Qf_\theta
	\end{align*}
	and 
	\begin{align*}
	& \rg \Pf_\theta =\Span\{ \rf_\theta \}\\
	& \rg \Qf_\theta= \Span\{\gf_\theta \},
	\end{align*}
	for all $\tau \geq 0$.
	Furthermore, define $\tilde{\Pf}_\theta$ as $\tilde{\Pf}_\theta:=\I-\Pf_\theta-\Qf_\theta$. Then one has the bound
	$$
	\|\Sf_\theta(\tau)\tilde{\Pf}_\theta \uf\|\lesssim e^{-\frac{4}{3(p-1)} \tau}\|\tilde{\Pf}_\theta \uf\|
	$$
	for all $\uf \in \mathcal{H}$ and all $ \tau \geq 0$ and all $\theta \in \R$ with $|\theta|$ small enough.
\end{prop}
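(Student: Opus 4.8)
The plan is to construct the projections $\Pf_\theta$ and $\Qf_\theta$ directly from the spectral decomposition of $\Lf_\theta$ established in the previous lemma, and then to translate the resulting spectral information into semigroup bounds via the Hille–Yosida/Gearhart-type framework. First I would set, for $|\theta|$ small,
\[
\Pf_\theta := \frac{1}{2\pi i}\int_{\gamma_0}\Rf_{\Lf_\theta}(z)\,dz,
\qquad
\Qf_\theta := \frac{1}{2\pi i}\int_{\gamma_1}\Rf_{\Lf_\theta}(z)\,dz,
\]
with $\gamma_0,\gamma_1$ the curves from \eqref{curves}; by the previous lemmas these curves lie in $\rho(\Lf_\theta)$ once $|\theta|$ is small enough, and $\Pf_\theta\Qf_\theta=\Qf_\theta\Pf_\theta=0$ is the standard disjoint-contour computation. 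Since $\Pf^{\mathrm{tot}}_0$ has rank $2$ and $\Pf^{\mathrm{tot}}_\theta$ depends continuously on $\theta$, the same perturbation argument (Kato, as already used) gives that $\Pf_\theta$, $\Qf_\theta$ each have rank $1$, and from the eigenfunction computation of the previous lemma their ranges are $\Span\{\rf_\theta\}$ and $\Span\{\gf_\theta\}$ respectively. The commutation $[\Sf_\theta(\tau),\Pf_\theta]=[\Sf_\theta(\tau),\Qf_\theta]=0$ is automatic because Riesz projections built from the resolvent commute with the semigroup generated by the operator.

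Next I would identify the action of $\Sf_\theta(\tau)$ on the one-dimensional invariant subspaces. On $\rg\Pf_\theta=\Span\{\rf_\theta\}$ the operator $\Lf_\theta$ acts as $0$ (since $\rf_\theta$ is a genuine eigenfunction with eigenvalue $0$ and the eigenvalue is simple, hence the generator restricted to this line is the zero operator), so $\Sf_\theta(\tau)\Pf_\theta = e^{0\cdot\tau}\Pf_\theta = \Pf_\theta$. Likewise on $\rg\Qf_\theta=\Span\{\gf_\theta\}$ the generator acts as multiplication by $1$, giving $\Sf_\theta(\tau)\Qf_\theta = e^{\tau}\Qf_\theta$. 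Here one should note the eigenvalues are simple (no Jordan block), which was established in the previous lemma, so these are genuine exponentials and not polynomial-times-exponential.

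The substantive step is the decay bound $\|\Sf_\theta(\tau)\tilde{\Pf}_\theta\uf\|\lesssim e^{-\frac{4}{3(p-1)}\tau}\|\tilde{\Pf}_\theta\uf\|$. The strategy is to work on the complementary invariant subspace $\tilde{\Pf}_\theta\mathcal{H}$, on which $\Lf_\theta$ restricts to a generator whose spectrum lies in $\{\Re z\le -\frac{3}{2(p-1)}\}$ by the previous lemma. I would combine (i) the uniform resolvent bound $\|\Rf_{\Lf_\theta}(\lambda)\|\le c$ on $\Omega'_{x_0,y_0}$ from the lemma two steps back, which controls the resolvent on the relevant right half-plane away from a compact set, with (ii) a uniform-in-$\theta$ resolvent bound on the remaining compact strip, obtained from Lemma \ref{reslem2} and the identity $\Rf_{\Lf_\theta}(\lambda)=\Rf_{\Lf_0}(\lambda)(\I-(\Lf_0'-\Lf_\theta')\Rf_{\Lf_0}(\lambda))^{-1}$ together with compactness of $\partial\Omega_{x_0,y_0}\cap\{\Re z\ge-\frac{4}{3(p-1)}\}$ and its distance to $\sigma(\Lf_0)$. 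These give a uniform bound $\|\Rf_{\Lf_\theta}(\lambda)\|\lesssim 1$ for $\Re(\lambda)\ge -\frac{4}{3(p-1)}$ restricted to $\tilde{\Pf}_\theta\mathcal H$. I would then invoke the Gearhart–Prüss theorem (or directly the Hille–Yosida characterization as used in Lemma \ref{resbound}, translated by the spectral bound $-\frac{3}{2(p-1)} < -\frac{4}{3(p-1)}$ which leaves room) on the Hilbert space $\tilde{\Pf}_\theta\mathcal H$ to conclude $\|\Sf_\theta(\tau)\tilde{\Pf}_\theta\uf\|\lesssim e^{-\frac{4}{3(p-1)}\tau}\|\tilde{\Pf}_\theta\uf\|$, and the uniformity in $\theta$ follows because all the constituent bounds were uniform for $|\theta|$ small.

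The main obstacle I expect is establishing that the resolvent bound on the vertical line $\Re(\lambda)=-\frac{4}{3(p-1)}$ is \emph{uniform in $\theta$} and, separately, verifying that the spectral gap is genuine on the complementary subspace — i.e., that no eigenvalue of $\Lf_\theta$ has crept into the strip $-\frac{3}{2(p-1)}<\Re(\lambda)\le -\frac{4}{3(p-1)}$ for small $\theta$. The first is handled by the Neumann-series/compactness argument sketched above; the second already follows from the previous lemma, which confines $\sigma(\Lf_\theta)\setminus\{0,1\}$ to $\{\Re z\le -\frac{3}{2(p-1)}\}$, so the gap is safe. The remaining care is purely quantitative bookkeeping of the constants so that everything holds for one fixed threshold on $|\theta|$.
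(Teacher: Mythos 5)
Your proposal is correct and follows essentially the same route as the paper: define $\Pf_\theta$ and $\Qf_\theta$ as Riesz projections along $\gamma_0,\gamma_1$, read off the rank, ranges, commutation and the actions $\Sf_\theta(\tau)\Pf_\theta=\Pf_\theta$, $\Sf_\theta(\tau)\Qf_\theta=e^\tau\Qf_\theta$ from the spectral analysis of $\Lf_\theta$, and obtain the decay on $\tilde{\Pf}_\theta\mathcal{H}$ from a uniform resolvent bound on the half-plane $\{\Re z\geq -\tfrac{4}{3(p-1)}\}$ via the Gearhart--Pr\"uss--Greiner theorem. You merely spell out the uniform-in-$\theta$ resolvent estimate in more detail than the paper, which simply asserts it before invoking the same theorem.
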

\begin{proof}
	Analogously to $\Pf_0$ and $\Qf_0$, we define the spectral projections $\Pf_\theta$ and $\Qf_\theta$ by
	$$
	\Pf_\theta:= \frac{1}{2 \pi i }\int_{\gamma_0} \Rf_{\Lf_\theta}(z) dz \text{ and } \Qf_\theta :=\frac{1}{2 \pi i }\int_{\gamma_1} \Rf_{\Lf_\theta}(z) dz
	$$
	with $\gamma_0$ and $\gamma_1$ defined as in (\ref{curves}). 
	As $\Pf_\theta$ and $\Qf_\theta$ are spectral projections, it follows that $\Pf_\theta \Qf_\theta =\Qf_\theta \Pf_\theta =0$. Since the operator $\Lf_\theta$ commutes with each of the two projections, also the semigroup it generates does so. Finally, to establish the growth estimate on $\Sf_\theta(\tau)\tilde{\Pf}_\theta$ note that one has 
	$ \sup_{z \in \mathbb{H}_{p}^{+}}\|\Rf_\Lf(z)\tilde{\Pf}_\theta\| < \infty$, where $\mathbb{H}_{p}^{+}:= \{ z\in \C : \Re(z) \geq -\frac{4}{3(p-1)}\}$. Therefore we can apply the Gearhart-Prüss-Greiner Theorem (see \cite{EngNag99}, p. 302, Theorem 1.11) to obtain the estimate and hence conclude the proof of this proposition, as the rest follows from the previous lemmas.
\end{proof}
The final result of this section are three more Lipschitz estimates that will be essential later on.
\begin{lem}\label{liponrg}
	We have
	\begin{align*}
	\|\gf_{\theta_1}-\gf_{\theta_2}\|+\|\rf_{\theta_1}-\rf_{\theta_2}\|&\lesssim |\theta_1-\theta_2|\\
	\|\Pf_{\theta_1}-\Pf_{\theta_2}\|+\|\Qf_{\theta_1}-\Qf_{\theta_2}\| &\lesssim|\theta_1-\theta_2|\\
	\|\Sf_{\theta_1}(\tau)\tilde{\Pf}_{\theta_1}-\Sf_{\theta_2}(\tau)\tilde{\Pf}_{\theta_2}\|&\lesssim |\theta_1-\theta_2|e^{-\frac{1}{p-1}\tau},
	\end{align*}
for all $\theta_1,\theta_2\in \R$ with $|\theta_1|,|\theta_2|$ sufficiently small and all $\tau \geq 0$. 
\end{lem}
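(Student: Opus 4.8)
\textbf{Proof plan for Lemma \ref{liponrg}.}

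The plan is to prove the three estimates in order, since each relies on the previous ones. For the first estimate, I would simply read off the components of $\gf_\theta$ and $\rf_\theta$ as given explicitly in the preceding lemma: each entry is a fixed constant times $\cos\theta$ or $\sin\theta$, and since $t\mapsto\cos t$ and $t\mapsto\sin t$ are globally Lipschitz with constant $1$, we get $\|\gf_{\theta_1}-\gf_{\theta_2}\|+\|\rf_{\theta_1}-\rf_{\theta_2}\|\lesssim|\theta_1-\theta_2|$ after accounting for the (fixed, $p$-dependent) constants and the normalization of the $\mathcal{H}$-norm on constant functions over $\B_1^3$. This is essentially the same observation already used in Lemma \ref{liplem1}.

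For the second estimate, I would start from the contour representation $\Pf_\theta=\frac{1}{2\pi i}\int_{\gamma_0}\Rf_{\Lf_\theta}(z)\,dz$ and the identity $\Rf_{\Lf_\theta}(z)=\Rf_{\Lf_0}(z)\big(\I-(\Lf_\theta'-\Lf_0')\Rf_{\Lf_0}(z)\big)^{-1}$, valid and uniformly bounded for $z$ on the compact curve $\gamma_0$ once $|\theta|$ is small enough (by Lemma \ref{reslem2} and the Neumann series argument there). Writing the difference $\Rf_{\Lf_{\theta_1}}(z)-\Rf_{\Lf_{\theta_2}}(z)$ via the resolvent identity $\Rf_{\Lf_{\theta_1}}(z)-\Rf_{\Lf_{\theta_2}}(z)=\Rf_{\Lf_{\theta_1}}(z)(\Lf_{\theta_1}'-\Lf_{\theta_2}')\Rf_{\Lf_{\theta_2}}(z)$, and using the uniform resolvent bound on $\gamma_0$ together with Lemma \ref{liplem1} ($\|\Lf_{\theta_1}'-\Lf_{\theta_2}'\|\lesssim|\theta_1-\theta_2|$), I integrate over the (fixed, finite-length) curve $\gamma_0$ to obtain $\|\Pf_{\theta_1}-\Pf_{\theta_2}\|\lesssim|\theta_1-\theta_2|$; the bound for $\Qf$ is identical with $\gamma_1$ in place of $\gamma_0$.

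For the third estimate I would combine a semigroup and spectral argument. Write $\tilde\Pf_\theta=\I-\Pf_\theta-\Qf_\theta$, so by the second estimate $\|\tilde\Pf_{\theta_1}-\tilde\Pf_{\theta_2}\|\lesssim|\theta_1-\theta_2|$. Using the Dunford functional calculus, $\Sf_\theta(\tau)\tilde\Pf_\theta=\frac{1}{2\pi i}\int_{\Gamma}e^{\tau z}\Rf_{\Lf_\theta}(z)\,dz$ where $\Gamma$ is a contour lying in $\{\Re z\le -\tfrac{4}{3(p-1)}\}$ enclosing the remaining spectrum (this is exactly the representation behind the decay bound in Proposition \ref{projections}); alternatively, one splits $\Sf_{\theta_1}(\tau)\tilde\Pf_{\theta_1}-\Sf_{\theta_2}(\tau)\tilde\Pf_{\theta_2}=\big(\Sf_{\theta_1}(\tau)-\Sf_{\theta_2}(\tau)\big)\tilde\Pf_{\theta_1}+\Sf_{\theta_2}(\tau)\big(\tilde\Pf_{\theta_1}-\tilde\Pf_{\theta_2}\big)$ and uses a Duhamel-type identity $\Sf_{\theta_1}(\tau)-\Sf_{\theta_2}(\tau)=\int_0^\tau \Sf_{\theta_1}(\tau-\sigma)(\Lf_{\theta_1}'-\Lf_{\theta_2}')\Sf_{\theta_2}(\sigma)\,d\sigma$. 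The main obstacle — and the reason I favor the contour approach — is that the crude Duhamel estimate only gives $e^{-\frac{4}{3(p-1)}\tau}$ times a factor linear in $\tau$ (from the time integral of two exponentially decaying semigroups restricted to $\tilde\Pf$), which one then absorbs into a slightly worse exponent; since $\frac{1}{p-1}<\frac{4}{3(p-1)}$, the polynomial loss is harmless and the stated rate $e^{-\frac{1}{p-1}\tau}$ follows. With the contour representation one instead differentiates the integrand in $\theta$, bounds $\|\partial_\theta \Rf_{\Lf_\theta}(z)\|=\|\Rf_{\Lf_\theta}(z)(\partial_\theta\Lf_\theta')\Rf_{\Lf_\theta}(z)\|$ uniformly on $\Gamma$, and the factor $e^{\tau\Re z}$ along $\Gamma$ supplies the exponential decay directly, again with room to spare for the exponent $\frac{1}{p-1}$. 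Either way, care is needed to keep the implied constants uniform in $\theta$ over a fixed small neighborhood of $0$, which is guaranteed by the uniform resolvent bounds established in the preceding lemmas.
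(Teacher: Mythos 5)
Your first two estimates are fine. The paper itself proves the first one by smoothness in $\theta$ plus the fundamental theorem of calculus, and for the second and third it simply refers to the proof of Lemma 4.9 in \cite{DonSch16}; your argument for the projections (contour representation, second resolvent identity $\Rf_{\Lf_{\theta_1}}(z)-\Rf_{\Lf_{\theta_2}}(z)=\Rf_{\Lf_{\theta_1}}(z)(\Lf_{\theta_1}'-\Lf_{\theta_2}')\Rf_{\Lf_{\theta_2}}(z)$, Lemma \ref{liplem1}, and uniform resolvent bounds on the fixed compact curves $\gamma_0,\gamma_1$) is a correct implementation of exactly that strategy.

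The third estimate is where both of your proposed routes have a genuine gap. The Duhamel splitting does \emph{not} give $e^{-\frac{4}{3(p-1)}\tau}$ times a polynomial factor: in the term $\Sf_{\theta_2}(\tau)\bigl(\tilde{\Pf}_{\theta_1}-\tilde{\Pf}_{\theta_2}\bigr)$ and inside $\int_0^\tau\Sf_{\theta_1}(\tau-\sigma)(\Lf_{\theta_1}'-\Lf_{\theta_2}')\Sf_{\theta_2}(\sigma)\tilde{\Pf}_{\theta_1}\,d\sigma$ the semigroups act on vectors that are not in their \emph{own} stable subspaces (since $\tilde{\Pf}_{\theta_1}\neq\tilde{\Pf}_{\theta_2}$), so the eigenvalue $1$ produces contributions of size $e^{\tau}|\theta_1-\theta_2|$ (respectively $e^{\tau}|\theta_1-\theta_2|^2$ after inserting $\tilde{\Pf}_{\theta_2}$ and using the projection Lipschitz bound), and for fixed $\theta_1\neq\theta_2$ these cannot be absorbed into $|\theta_1-\theta_2|e^{-\frac{1}{p-1}\tau}$ as $\tau\to\infty$; the decay of each semigroup is only available after composing with its own $\tilde{\Pf}_\theta$, which you cannot do simultaneously for both factors. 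Your preferred contour route is closer to the actual argument, but as stated it is not justified either: $\Sf_\theta$ is not analytic and the stable part of $\sigma(\Lf_\theta)$ is unbounded, so there is no Dunford representation $\Sf_\theta(\tau)\tilde{\Pf}_\theta=\frac{1}{2\pi i}\int_\Gamma e^{z\tau}\Rf_{\Lf_\theta}(z)\,dz$ over a contour in $\{\Re z\leq-\frac{4}{3(p-1)}\}$, and uniform boundedness of $\Rf_{\Lf_{\theta_1}}(z)(\Lf_{\theta_1}'-\Lf_{\theta_2}')\Rf_{\Lf_{\theta_2}}(z)$ along an unbounded line does not make the integral converge. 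A correct implementation (this is what the proof of Lemma 4.9 in \cite{DonSch16}, to which the paper delegates, supplies) uses the improper inverse-Laplace representation along a vertical line such as $\Re z=-\frac{1}{p-1}$ — which is precisely why the stated rate is the weaker $e^{-\frac{1}{p-1}\tau}$ — together with the quantitative decay $\|(\Lf_{\theta_1}'-\Lf_{\theta_2}')\Rf_{\Lf}(z)\|\lesssim|\theta_1-\theta_2|\,|z|^{-1}$ coming from the same componentwise mechanism the paper uses to show $\|\Lf_\theta'\Rf_\Lf(\lambda)\|\lesssim|\lambda|^{-1}$, plus a regularization/density step (e.g.\ applying the formula to $\ff\in D(\Lf)$ to gain another power of $|z|^{-1}$) to make the Bromwich integral meaningful. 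Without these ingredients your sketch of the third estimate does not go through.
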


\begin{proof}
	The estimate on $\gf_\theta$ and $ \rf_\theta$ follows from the fundamental theorem of calculus, as both functions are smooth with respect to $\theta$. The second and third estimate follow from  the proof of Lemma $4.9$ in \cite{DonSch16}.
\end{proof}
With this we conclude the linear analysis of Eq.~\eqref{abstraceq} and move on to the nonlinear part.
\section{Nonlinear theory}

In this section we will now deal with the nonlinearity $\N_{\theta(\tau)}$, which was defined as $$\N_{\theta(\tau)}(\uf)=\N(\Psi_{\theta(\tau)}+\uf)-\N(\Psi_{\theta(\tau)})-\Lf_{\theta(\tau)}'\uf$$
where
$$
\Nf(\uf)(\rho):=\begin{pmatrix}
0\\
\left|\left( u_1(\rho)\, ,u_3(\rho)\right)\right|^{p-1}u_1(\rho)\\
0\\
\left|\left( u_1(\rho)\, ,u_3(\rho)\right)\right|^{p-1}u_3(\rho)\\
\end{pmatrix},
$$
and with 
\begin{align*}
\Vf(\uf,\tau)(\rho):=\begin{pmatrix}
0\\
\Re\left( \Wf(\uf,\tau)(\rho)\right)\\
0\\
\Im\left( \Wf(\uf,\tau)(\rho)\right)
\end{pmatrix},
\end{align*}
whereas $\Wf$ was defined as
\begin{align*}
\Wf(\uf,\tau)(\rho):=&(T-T_0)^{2+\frac{2}{p-1}}e^{-(2+\frac{2}{p-1}) \tau}F\bigg(T-(T-T_0)e^{-\tau},(T-T_0) \rho e^{-\tau},\\
&(T-T_0)^{-\frac{2}{p-1}}e^{\frac{2}{p-1}\tau}(u_1+iu_3)(\rho),
\\&(T-T_0)^{-(1+\frac{2}{p-1})}e^{(1+\frac{2}{p-1})\tau}((u_2+iu_4)(\rho)-\frac{2}{p-1}(u_1+i u_3)(\rho)),\\&(T-T_0)^{-(1+\frac{2}{p-1})}e^{(1+\frac{2}{p-1})\tau}\partial_\rho (u_1+iu_3)(\rho)\bigg)
\end{align*}
for any $\uf \in \mathcal{H}$.
We also recall that in the similarity coordinates, which we use, the static blowup function takes the form
\begin{align*}
\Psi_\theta=
\begin{pmatrix}
&\kappa_p \cos(\theta) \\
&\frac{2}{p-1}\kappa_p\cos(\theta)\\
&\kappa_p \sin(\theta)\\
&\frac{2}{p-1}\kappa_p \sin(\theta)
\end{pmatrix}.
\end{align*}
\subsection{Estimates on the nonlinearity}
The first important estimate of the nonlinear theory is the following.
\begin{lem}\label{nonesti}
There exists a $\delta>0$ such that
\begin{equation}
\|\N_{\theta_1}(\uf)-\N_{\theta_2}(\vf)\|\lesssim \left(\|\uf\|+\|\vf\|\right)\|\uf-\vf\|+\left(\|\uf\|^2+\|\vf\|^2\right)|\theta_1-\theta_2|,
\end{equation}
for any $\uf,\vf \in \mathcal{H}$ with $\|\uf\|,\|\vf\|<\delta$ and $\theta_1,\theta_2\in \R$ with $|\theta_1|,|\theta_2|<\delta$. 
Furthermore $\N_\theta(\textup{\textbf{0}})=\textup{\textbf{0}}.$
\end{lem}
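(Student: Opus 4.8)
The plan is to reduce the estimate for $\N_{\theta}$ to a pointwise (in $\rho$) bound on the difference of the nonlinear map $z\mapsto |z|^{p-1}z$ and its linearization, together with the algebraic structure of $\Lf_\theta'$. Write $G:\C\to\C$, $G(z)=|z|^{p-1}z$, viewed as a $C^1$ map $\R^2\to\R^2$ (this is where $p>3$ enters: $G\in C^1$ precisely because $p-1>2$, so $DG$ exists and is itself Lipschitz of order $p-2>1$ on bounded sets, or at worst H\"older — but on a bounded set we may use $\big|DG(z_1)-DG(z_2)\big|\lesssim |z_1|^{p-2}|z_1-z_2|+\dots$, matching the terms appearing in \eqref{eq:estLipF}). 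The key elementary fact I would isolate first is: for $|a|,|b|\le \delta$,
\begin{equation*}
\big|G(a+h)-G(a)-DG(a)h\big|\lesssim (|a|+|h|)^{p-2}|h|^2\lesssim \delta^{p-2}|h|^2,
\end{equation*}
and a Lipschitz-in-the-base-point version controlling $\big|[G(a_1+h)-G(a_1)-DG(a_1)h]-[G(a_2+h)-G(a_2)-DG(a_2)h]\big|$ by $\lesssim (|a_1|+|a_2|+|h|)^{p-3}|a_1-a_2||h|^2$ when $p\ge 4$, and by a H\"older variant $\lesssim |a_1-a_2|^{p-3}|h|^2$ (or simply $\lesssim |a_1-a_2|(|h|^2+|h|^{p-1})$, which is all that is needed) when $3<p<4$.

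Next I would unravel the definition: since $\N(\Psi_\theta)$ only contributes the constant vector $(0,\kappa_p^p\cos\theta,0,\kappa_p^p\sin\theta)^{\mathsf T}$ and $\Lf_\theta'$ is exactly the Jacobian $DG(\kappa_p e^{i\theta})$ acting in the second and fourth slots (one checks $DG(x+iy)$ has matrix entries $(p-1)|z|^{p-3}x^2+|z|^{p-1}$ etc., which at $z=\kappa_p e^{i\theta}$, using $c_p=\kappa_p^{p-1}$, gives precisely the matrix displayed for $\Lf_\theta'$), the quantity $\N_\theta(\uf)$ at the point $\rho$ equals, in its nontrivial components,
\begin{equation*}
G\big(\kappa_p e^{i\theta}+(u_1+iu_3)(\rho)\big)-G\big(\kappa_p e^{i\theta}\big)-DG\big(\kappa_p e^{i\theta}\big)(u_1+iu_3)(\rho).
\end{equation*}
So $\N_{\theta_1}(\uf)-\N_{\theta_2}(\vf)$ is a difference of two such expressions with base points $\kappa_p e^{i\theta_1}$, $\kappa_p e^{i\theta_2}$ and increments $(u_1+iu_3)(\rho)$, $(v_1+iv_3)(\rho)$. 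I split it as (i) same base point $\theta_1$, increments differing — bounded by the first elementary fact applied to the difference, giving $\lesssim (|\uf(\rho)|+|\vf(\rho)|)|\uf(\rho)-\vf(\rho)|$ pointwise — plus (ii) same increment $\vf(\rho)$, base points differing — bounded by the Lipschitz-in-base-point fact by $\lesssim |\vf(\rho)|^2|\theta_1-\theta_2|$ (or $|\vf(\rho)|^2+|\vf(\rho)|^{p-1}$ times $|\theta_1-\theta_2|$). Summing/integrating, the $\mathcal H=(H^2\times H^1)^2$ norm then requires controlling these pointwise products and their first $\rho$-derivatives in $L^2(\B_1^3)$; here the radial Sobolev embedding $H^2(\B_1^3)\hookrightarrow L^\infty$ and the algebra property of $H^2$, together with $H^2\cdot H^1\hookrightarrow H^1$ in three dimensions, convert the pointwise bounds into the claimed norm bounds, with the higher powers $|\vf(\rho)|^{p-1}$ absorbed using $\|\vf\|<\delta$ and the embedding (this is exactly why the space was chosen with $H^2$ regularity in the top slot). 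The vanishing $\N_\theta(\mathbf 0)=\mathbf 0$ is immediate since $G(a)-G(a)-DG(a)\cdot 0=0$.

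The main obstacle I anticipate is the $H^1$-control of the $\rho$-derivative of the linearization remainder when $3<p<4$: differentiating $G(\kappa_p e^{i\theta}+u)-G(\kappa_p e^{i\theta})-DG(\kappa_p e^{i\theta})u$ in $\rho$ produces a term $\big(DG(\kappa_p e^{i\theta}+u)-DG(\kappa_p e^{i\theta})\big)\partial_\rho u$ in which $DG$ is only H\"older continuous of exponent $p-2\in(1,2)$ — fine, $p-2>1$ — but the second-difference estimate needed for the $|\theta_1-\theta_2|$ term involves $DG(a+h_1)-DG(a+h_2)$ type quantities where one wants a full derivative of $DG$, which does not exist for $p<5$. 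The resolution is to never ask for more than one H\"older-$(p-2)$ modulus at a time and to peel off the $\theta$-dependence before differentiating, i.e.\ to write the $\theta_1$–$\theta_2$ difference as an integral $\int_{\theta_2}^{\theta_1}\partial_\theta(\cdots)\,d\theta$ and estimate the integrand, which only costs one derivative of the smooth base point $\kappa_p e^{i\theta}$ and leaves the nonsmooth $G$-increment intact; the mismatch between $|\theta_1-\theta_2|$ and $|\theta_1-\theta_2|^{p-3}$ for $p<4$ is harmless because $|\theta_i|<\delta$ lets us trade exponents. Carrying out these pointwise inequalities carefully and then invoking the three-dimensional radial Sobolev/algebra estimates is routine but bookkeeping-heavy; I would organize it as two sublemmas (the pointwise $G$-estimates, and the $\mathcal H$-norm estimates for products) and then assemble.
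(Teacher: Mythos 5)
Your argument is correct and is essentially the proof the paper outsources to Lemma 5.2 of \cite{DonSch16}: Taylor-expand $z\mapsto |z|^{p-1}z$ around the $\rho$-independent, $\theta$-smooth base point $\kappa_p e^{i\theta}$ (whose Jacobian, as you verify, is exactly $\Lf_\theta'$), split the difference into a fixed-base-point part and a fixed-increment part, and convert the pointwise bounds into $\mathcal{H}$-bounds via $H^2(\B^3_1)\hookrightarrow L^\infty(\B^3_1)$ and the product estimates. Your concerns about the limited smoothness of $|z|^{p-1}z$ for small $p$ are actually moot, since by that embedding and $\|\uf\|,\|\vf\|<\delta$ all arguments stay in a fixed annulus around $|z|=\kappa_p$ where the nonlinearity is smooth; in any case your workaround of integrating in $\theta$ is sound.
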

\begin{proof}
As $\Psi_{\theta}$ is independent of $\rho$ and smooth as a function of $\theta$, this follows analogously to Lemma 5.2 in \cite{DonSch16}.
\end{proof}
Next, we will prove a similar result for $\Vf$. In order to do that, we recall that the perturbation $F$ is of the form
\begin{equation*}
F(t,r,u,v,w)= A(t,r,u)+B(t,r,u)v+C(t,r,u)w,
\end{equation*}
where $A$, $B$, and $C$ satisfy
\begin{align*}
|A(t,r,u)|&\leq M(1+|u|^{q})
\\
|A(t,r,u_1)-A(t,r,u_2)|
&\leq M\left|u_1|u_1|^{q-1}-u_2|u_2|^{q-1}\right|
\\
|B(t,r,u)|+|C(t,r,u)|&\leq M(1+ |u|)
\\
|B(t,r,u_1)-B(t,r,u_2)|+|C(t,r,u_1)-C(t,r,u_2)|&\leq M |u_1-u_2|,
\end{align*}
while the whole perturbation $F$ satisfies
\begin{align*}
|\partial_r F(t,r,u,v,w)|\leq& M(1+|u|^{q}+(1+|u|)(|v|+|w|))\\
|\partial_{x} F(t,r,x+ iy,v,w)|+|\partial_{y} F(t,r,x + iy,v,w)|\leq& M(1+|u|^{q-1}+|v|+|w|).
\end{align*}
as well as
\begin{align*}
&|\partial_rF(t,r,u_1,v_1,w_1)-\partial_r F(t,r,u_2,v_2,w_2)|
\\
&\leq M \big( \left|u_1|u_1|^{q-1}-u_2|u_2|^{q-1} \right|
+ |v_1-v_2|+|w_1-w_2|\\
&\;\;\;+|u_1v_1-u_2v_2|+|u_1w_1-u_2w_2|\big)
\\
&|\partial_{x_1}F(t,r,x_1+i y_1,v_1,w_1)- \partial_{x_2}F(t,r,x_2+iy_2,v_2,w_2)|
\\
&\leq M\left(\left| u_1|u_1|^{q-2}-u_2|u_2|^{q-2}\right|+|v_1-v_2|+|w_1-w_2|\right)
\\
&|\partial_{y_1}F(t,r,x_1+i y_1,v_1,w_1)- \partial_{y_2}F(t,r,x_2+iy_2,v_2,w_2)|
\\
&\leq M\left(\left|u_1|u_1|^{q-2}-u_2|u_2|^{q-2}\right|+|v_1-v_2|+|w_1-w_2|\right),
\end{align*}
for $1\leq q<p $ and $M>0$.
Note that these estimates imply
\begin{align*}
|\partial_{x_1}B(t,r,x_1+ iy_1)-\partial_{x_2}B(t,r,x_2+ iy_2)|+|\partial_{x_1}C(t,r,x_1+ i y_1)-\partial_{x_2}C(t,r,x_2+ i y_2)|&=0\\
|\partial_{y_1}B(t,r,x_1+ iy_1)-\partial_{y_2}B(t,r,x_2+ iy_2)|+|\partial_{y_1}C(t,r,x_1+ i y_1)-\partial_{y_2}C(t,r,x_2+ i y_2)|&=0.
\end{align*}

\begin{lem}\label{lemonV}
The operator $\Vf$ maps $\mathcal{H}\times [0,\infty)$ to $\mathcal{H}$ and there exists a $\tilde{q}>0$ such that
\begin{align*}
\|\Vf(\uf,\tau)\|&\lesssim (T-T_0)^{\tilde{q}} e^{-\tilde{q}\tau}\left(1+\|\uf\|^{q}+\|\uf\|^2\right)
\end{align*}
and
\begin{align*}
\|\Vf(\uf,\tau)-\Vf(\vf,\tau)\|&\lesssim (T-T_0)^{\tilde{q}} e^{-\tilde{q}\tau}\|\uf-\vf\|\Big(1+\|\uf\|+\|\vf\|+\|\uf\|^{q-1}+\|\vf\|^{q-1}
\\
& +\|\uf\|\left(\left\||u_1+iu_3|^{q-2}\right\|_{L^\infty(\B^3_1)}+\left\||v_1+iv_3|^{q-2}\right\|_{L^\infty(\B^3_1)}\right)\Big),
\end{align*}
for any $\uf, \vf \in \mathcal{H}$ and $ \tau \in [0,\infty).$
\end{lem}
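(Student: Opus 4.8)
The plan is to unwind the definition of $\Vf$ in terms of $\Wf$ and $F$, and to exploit the decomposition $F = A + Bv + Cw$ together with the bounds \eqref{estimateV1}, \eqref{eq:estBC}, \eqref{eq:estderF}, \eqref{eq:estLipF}. Since $\Vf(\uf,\tau)$ only has entries in the second and fourth slot, and those entries are $\Re\Wf$ and $\Im\Wf$, it suffices to estimate $\Wf(\uf,\tau)$ and its $\rho$-derivative in $L^2(\B^3_1)$ (with the $r^2\,d\rho$ weight), i.e. to show $\Wf(\uf,\tau) \in H^1$ with the asserted bounds; the $H^1$ norm on radial functions is the one recorded after Theorem \ref{maintheorem}. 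Throughout I abbreviate the rescaled arguments of $F$ appearing in the definition of $\Wf$ as $t_\tau = T-(T-T_0)e^{-\tau}$, $r_\tau = (T-T_0)\rho e^{-\tau}$, $\mathfrak u = (T-T_0)^{-\frac{2}{p-1}}e^{\frac{2}{p-1}\tau}(u_1+iu_3)$, and similarly $\mathfrak v$, $\mathfrak w$ for the third and fourth arguments, each carrying a prefactor $(T-T_0)^{-(1+\frac2{p-1})}e^{(1+\frac2{p-1})\tau}$. The key bookkeeping point is that $A$ contributes at most $|\mathfrak u|^q$, which after multiplication by the outer factor $(T-T_0)^{2+\frac2{p-1}}e^{-(2+\frac2{p-1})\tau}$ produces a net power $(T-T_0)^{2+\frac2{p-1}-\frac{2q}{p-1}}e^{-(2+\frac2{p-1}-\frac{2q}{p-1})\tau}$, and the exponent $2 + \frac{2}{p-1} - \frac{2q}{p-1} = 2 - \frac{2(q-1)}{p-1}$ is strictly positive precisely because $q < p$ (indeed $\frac{2(q-1)}{p-1} < 2$). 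The terms $B\mathfrak v$ and $C\mathfrak w$ are easier: $|B|+|C|\lesssim 1+|\mathfrak u|$, so these contribute $(1+|\mathfrak u|)(|\mathfrak v|+|\mathfrak w|)$, and the outer factor beats the two inner factors of $e^{(1+\frac2{p-1})\tau}$ against $e^{-(2+\frac2{p-1})\tau}$ with room to spare (net exponent $1 - \frac{2q}{p-1}$ type, still positive for the relevant range after possibly shrinking). Choosing $\tilde q$ to be the minimum of these finitely many positive exponents gives the first estimate, after using the Sobolev embedding $H^2(\B^3_1)\hookrightarrow L^\infty(\B^3_1)$ (valid since $2 > 3/2$) to turn $\|\,|\mathfrak u|^q\|_{L^2}\lesssim \|u_1+iu_3\|_{L^\infty}^{q-1}\|u_1+iu_3\|_{L^2}\lesssim \|\uf\|^q$ and likewise $\|(1+|\mathfrak u|)(|\mathfrak v|+|\mathfrak w|)\|_{L^2}\lesssim (1+\|\uf\|)\|\uf\| \lesssim 1+\|\uf\|^2$.

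For the $\rho$-derivative one applies the chain rule to $\Wf$. The derivative $\partial_\rho$ hits $F$ through its explicit $r$-dependence (giving $\partial_r F$, controlled by \eqref{eq:estderF}, with $r_\tau$ carrying a factor $(T-T_0)e^{-\tau}$ which only helps), and through the $u$-, $v$-, $w$-arguments, i.e. through $\partial_{x}F,\partial_{y}F$ times $\partial_\rho(u_1+iu_3)$ etc. Here one needs $\partial_\rho u_1, \partial_\rho u_3 \in H^1(\B^3_1)$ and $\partial_\rho^2 u_1,\partial_\rho^2 u_3 \in L^2$ — exactly what membership of $\uf$ in $\mathcal H = (H^2\times H^1(\B^3_1))^2$ provides — together with $L^\infty$ control of $u_1,u_3,\partial_\rho u_1,\partial_\rho u_3$: the former again by $H^2\hookrightarrow L^\infty$, the latter because $\partial_\rho u_1 \in H^1(\B^3_1)$ and... wait, $H^1(\B^3_1)\not\hookrightarrow L^\infty$ in three dimensions, so here one must be slightly more careful and instead pair the $L^\infty$ factor onto $u_1$ (which is in $H^2$) and keep $\partial_\rho u_1$ in $L^2$, using that the bound \eqref{eq:estderF} on $\partial_x F$ only involves $|u|^{q-1}$ and $|v|+|w|$ linearly, so each product appearing is of the schematic form (bounded function of $\uf$)$\times$($L^2$ function) or $\|\uf\|_{L^\infty}^{q-1}\times \partial_\rho u \in L^2$. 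Collecting, every term in $\partial_\rho\Wf$ is estimated in $L^2(\B^3_1)$ by a constant times $(1 + \|\uf\|^{q} + \|\uf\|^2)$ times the same net positive power $(T-T_0)^{\tilde q}e^{-\tilde q \tau}$ as before (possibly after shrinking $\tilde q$ once more), which proves $\Vf(\uf,\tau)\in\mathcal H$ and the first displayed bound.

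For the Lipschitz estimate one writes $\Wf(\uf,\tau) - \Wf(\vf,\tau)$ and uses the difference hypotheses: for the $A$-part, \eqref{estimateV1} gives $\bigl| \mathfrak u_{\uf}|\mathfrak u_{\uf}|^{q-1} - \mathfrak u_{\vf}|\mathfrak u_{\vf}|^{q-1}\bigr|$, which by the elementary inequality $|a|a|^{q-1}-b|b|^{q-1}| \lesssim (|a|^{q-1}+|b|^{q-1})|a-b|$ and $H^2\hookrightarrow L^\infty$ yields the factor $\|\uf-\vf\|(1+\|\uf\|^{q-1}+\|\vf\|^{q-1})$; for the $B\mathfrak v, C\mathfrak w$ parts one adds and subtracts, using \eqref{eq:estBC} in the form $|B(\cdot,u_1)\mathfrak v_1 - B(\cdot,u_2)\mathfrak v_2| \le |B(\cdot,u_1)||\mathfrak v_1-\mathfrak v_2| + |B(\cdot,u_1)-B(\cdot,u_2)||\mathfrak v_2| \lesssim (1+\|\uf\|)\|\uf-\vf\| + \|\uf-\vf\|\|\vf\|$, giving the factor $\|\uf-\vf\|(1+\|\uf\|+\|\vf\|)$. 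For the $\rho$-derivative of the difference, the Lipschitz bounds \eqref{eq:estLipF} on $\partial_r F$, $\partial_x F$, $\partial_y F$ are exactly designed for this: the first line of \eqref{eq:estLipF} controls $\partial_r F(\cdot,\uf)-\partial_r F(\cdot,\vf)$ by $|u_1|u_1|^{q-1}-\cdots| + |v_1-v_2| + |w_1-w_2| + |u_1v_1-u_2v_2| + |u_1w_1-u_2w_2|$, and each of these, after the $L^\infty$/$L^2$ splitting above, contributes one of the terms in the asserted bound; the terms $|\partial_x F(\cdot,\uf)-\partial_x F(\cdot,\vf)|\cdot|\partial_\rho u_{1,\uf}|$ are controlled via \eqref{eq:estLipF} by $\bigl(|u_1|u_1|^{q-2}-u_2|u_2|^{q-2}| + \cdots\bigr)|\partial_\rho u_{1,\uf}|$, and the first summand is precisely why the $L^\infty$-norms of $|u_1+iu_3|^{q-2}$ and $|v_1+iv_3|^{q-2}$ appear in the statement (one uses $|a|a|^{q-2}-b|b|^{q-2}|\lesssim (|a|^{q-2}+|b|^{q-2})|a-b|$), while the cross terms of the form $\partial_x F(\cdot,\vf)\cdot(\partial_\rho u_{1,\uf}-\partial_\rho u_{1,\vf})$ reuse the $L^\infty$ bound on a bounded function of $\vf$ times $\|\uf-\vf\|$. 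Pulling out $\|\uf-\vf\|$ and the common positive power $(T-T_0)^{\tilde q}e^{-\tilde q\tau}$ completes the proof.

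The main obstacle is purely bookkeeping: the derivative bounds force one to split $L^\infty\times L^2$ carefully since $H^1(\B^3_1)$ does not embed into $L^\infty$, so one must always arrange the factor carrying the $L^\infty$ norm to be $u_1$ or $u_3$ (which lie in $H^2$) rather than their first derivatives. Everything else is tracking the exponents of $(T-T_0)$ and $e^{-\tau}$ and checking they stay positive, which is guaranteed by $q<p$.
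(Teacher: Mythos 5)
Your overall strategy coincides with the paper's (direct estimation of $\Wf$ and $\partial_\rho\Wf$ using the structural bounds on $A,B,C$, the embedding $H^2(\B^3_1)\hookrightarrow L^\infty(\B^3_1)$, and exponent bookkeeping in $(T-T_0)$ and $e^{-\tau}$), but there is a genuine gap exactly at the point you yourself flag as delicate. When $\partial_\rho$ falls on the $u$-argument of $F$, the bound $|\partial_xF|+|\partial_yF|\lesssim 1+|u|^{q-1}+|v|+|w|$ produces, after undoing the scaling, products of the schematic form $(u_j')^2$ and $u_{j+1}\,u_j'$ (and, in the Lipschitz step, $u_l\,(u_1'-v_1')$, $(v_1'+iv_3')(u_l-v_l)$, etc.). In these products \emph{neither} factor lies in $H^2$: both are only $H^1(\B^3_1)$ functions, so your prescription ``always put the $L^\infty$ norm on $u_1$ or $u_3$'' cannot be carried out, and the claimed schematic splitting (bounded function)$\times$($L^2$ function) does not cover them. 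The paper closes exactly this point with Hölder in $L^4\times L^4\to L^2$ together with the Sobolev embedding $\|\cdot\|_{L^4(\B^3_1)}\lesssim\|\cdot\|_{H^1(\B^3_1)}$ (this is where the terms $\sum_{j=1,3}\|u_j'\|_{L^4(\B^3_1)}^2$ and $\|u_l\|_{L^4}\|u_1'+iu_3'-(v_1'+iv_3')\|_{L^4}$ appear in its computation), and also why the quadratic term $\|\uf\|^2$ shows up in the first estimate. Your proof as written is missing this tool, so the derivative estimates — both the plain bound and the Lipschitz bound — do not go through.

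A second, smaller issue is the exponent count for the $Bv$ and $Cw$ contributions: the worst term is $|u|\,(|v|+|w|)$, which carries $e^{(\frac{2}{p-1}+1+\frac{2}{p-1})\tau}$ against the outer $e^{-(2+\frac{2}{p-1})\tau}$, so the net exponent is $1-\frac{2}{p-1}$, positive because $p>3$; it is not of the form $1-\frac{2q}{p-1}$ (which would in general be negative), and ``shrinking $\tilde q$'' cannot repair a negative exponent. The paper's choice $\tilde q=2+\frac{2}{p-1}-\max\{\frac{2q}{p-1},\,1+\frac{4}{p-1}\}$ records precisely that positivity rests on both $q<p$ and $p>3$. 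This slip is repairable, but as stated your justification of $\tilde q>0$ for these terms is incorrect.
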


\begin{proof}
Set
$$
\tilde{q}=2+\frac{2}{p-1}-\max\{q \frac{2}{p-1},1+\frac{4}{p-1}\}
$$
and note that $ q<p$ implies $\tilde{q}>0$.
Therefore, the bounds on $F,$ the Banach algebra property of $H^2(\B^3_1)$, and Hölder's inequality yield the estimates
\begin{align*}
\left\|[\Vf(\uf,\tau)(\rho)]_k\right\|_{L^2(\B^3_1)}\lesssim& (T-T_0)^{\tilde{q}} e^{-\tilde{q}\tau}\bigg(1+\sum_{j=1,3}\Big( \||u_j|^q\|_{L^2(\B^3_1)}+\|(|u_1+ iu_3|+1) u_j'\|_{L^2(\B^3_1)}
\\
&+\|(|u_1+ iu_3|+1)(|u_j|+|u_{j+1}|)\|_{L^2(\B^3_1)}\Big)
\bigg)
\\
\lesssim& (T-T_0)^{\tilde{q}}e^{-\tilde{q}\tau}\left(1+\|\uf\|^q+\|\uf\|^2+\|\uf\|\right)
\end{align*}
and
\begin{align*}
\left\|[\Vf(\uf,\tau)]_k\right\|_{\dot{H}^1(\B^3_1)}\lesssim& (T-T_0)^{\tilde{q}} e^{-\tilde{q}\tau}\bigg(1+\sum_{j=1,3} \Big[\||u_j|^q\|_{L^2(\B^3_1)}+\|(|u_1+ iu_3|+1)u_j'\|_{L^2(\B^3_1)}
\\
&+\|(|u_1+ iu_3|+1)(|u_j|+|u_{j+1}|)\|_{L^2(\B^3_1)}
\bigg)
\\
&+\| u'_j\|_{L^4(\B^3_1)} \Big(1+\||u_j|^{q-1}\|_{L^4(\B^3_1)} +\|u_j'\|_{L^4(\B^3_1)}
+\|u_{j}\|_{L^4(\B^3_1)}
\\
&+\|u_{j+1}\|_{L^4(\B^3_1)}\Big)
\\
 &+\sum_{l=1}^4\Big(\|(u_1'+ iu'_{3})u_l\|_{L^2(\B^3_1)}
 +\|(1+|u_1+ iu_{3}|)u_l'\|_{L^2(\B^3_1)}\Big)
 \\
 &+ \|(u'_j)^2\|_{L^2(\B^3_1)}+\|(1+|u_1+iu_3|)u''_j\|_{L^2(\B^3_1)} \Big]
 \bigg)\\
\lesssim& (T-T_0)^{\tilde{q}}e^{-\tilde{q}\tau}\left(1+\|\uf\|^q+\|\uf\|^2+\|\uf\|+\sum_{j=1,3}\|u'_j\|^2_{L^4(\B^3_1)}\right),
\end{align*}
for $k=2,4$, $\uf \in \mathcal{H},$ and $\tau \geq0$.
Thus, the estimate
\begin{align*}
\|\Vf(\uf,\tau)\|&\lesssim (T-T_0)^{\tilde{q}} e^{-\tilde{q}\tau}\left(1+\|\uf\|+\|\uf\|^{q}+\|\uf\|^2\right)
\end{align*}
follows from the Sobolev inequality $\|.\|_{L^4(\B^3_1)}\lesssim \|.\|_{H^1(\B^3_1)}$. The first estimate stated in the Lemma now follows from the elementary inequality
$$
\|\uf\|\leq1+\|\uf\|^2.
$$
Similarly, the bounds on the perturbation imply
\begin{align*}
\|&[\Vf(\uf,\tau)-\Vf(\vf,\tau)]_k\|_{L^2(\B^3_1)}\\
\lesssim& (T-T_0)^{\tilde{q}} e^{-\tilde{q}\tau}\bigg(\left\|(u_1+i u_3)|u_1+iu_3|^{q-1}-(v_1+iv_3)|v_1+iv_3|^{q-1}\right\|_{L^2(\B^3_1)}
\\
&+\sum_{j=1,3}\Big(\|(1+|u_1+ i u_3|)(u'_j- v'_j)\|_{L^2(\B^3_1)}+ \| v'_j(u_1+i u_3-(v_1+i v_3))\|_{L^2(\B^3_1)}
\\
&+\|(1+|u_1+ i u_3|)(u_{j+1}- v_{j+1})\|_{L^2(\B^3_1)}+ \left\| v_{j+1}(u_1+i u_3-(v_1+i v_3))\right\|_{L^2(\B^3_1)}\Big)\bigg)\\
\lesssim& (T-T_0)^{\tilde{q}} e^{-\tilde{q}\tau}\|\uf-\vf\|\left(1+\|\uf\|^{q-1}+\|\vf\|^{q-1}+\|\uf\|+\|\vf\|\right),
\end{align*}
for $k=2,4$, $\uf,\vf \in \mathcal{H},$ and $\tau \geq0$.
Next, we set
\begin{align*}
\tilde{A}(\tau,\rho,x,y)=A\bigg(T-(T-T_0)e^{-\tau},(T-T_0) \rho e^{-\tau},
&(T-T_0)^{-\frac{2}{p-1}}e^{\frac{2}{p-1}\tau}\left(x+ i y\right)\bigg)\\
\tilde{B}(\tau,\rho,x,y)=B\bigg(T-(T-T_0)e^{-\tau},(T-T_0) \rho e^{-\tau},
&(T-T_0)^{-\frac{2}{p-1}}e^{\frac{2}{p-1}\tau}\left(x+ iy\right)\bigg)\\
\tilde{C}(\tau,\rho,x,y)=C\bigg(T-(T-T_0)e^{-\tau},(T-T_0) \rho e^{-\tau},
&(T-T_0)^{-\frac{2}{p-1}}e^{\frac{2}{p-1}\tau}\left(x+ i y\right)\bigg),
\end{align*} 
for $x,y\in \R$, to obtain
\begin{align*}
 &\left\|[\Vf(\uf,\tau)-\Vf(\vf,\tau)]_2\right\|_{\dot{H^1}(\B^3_1)}+\left\|[\Vf(\uf,\tau)-\Vf(\vf,\tau)]_4\right\|_{\dot{H^1}(\B^3_1)}\\
\leq&
(T-T_0)^{2+\frac{2}{p-1}}e^{-(2+\frac{2}{p-1})\tau}\Big\|\tilde{A}\left(\tau,.,\Re(u_1+iu_3),\Im(u_1+iu_3)\right)
\\
&-\tilde{A}\left(\tau,.,\Re(v_1+iv_3),\Im(v_1+iv_3)\right)\Big\|_{\dot{H^1}(\B^3_1)}
\\
&+(T-T_0)e^{-\tau}\sum_{l=1}^4\Big\|\tilde{B}\left(\tau,.,\Re(u_1+iu_3),\Im(u_1+iu_3)\right)u_l
\\
&-\tilde{B}\left(\tau,.,\Re(v_1+iv_3),\Im(v_1+iv_3)\right)v_l\Big\|_{\dot{H^1}(\B^3_1)}
\\
&+(T-T_0)e^{-\tau}\Big\|\tilde{C}\left(\tau,.,\Re(u_1+iu_3),\Im(u_1+iu_3)\right)(u_1'+iu_3')
\\
&-\tilde{C}\left(\tau,.,\Re(v_1+iv_3),\Im(v_1+iv_3)\right)(v_1'+i v_3')\Big\|_{\dot{H^1}(\B^3_1)}
\\
=&:I_1+I_2+I_3.
\end{align*}
A straightforward calculation then shows
\begin{align*}
I_1\lesssim &  (T-T_0)^{2+\frac{2}{p-1}}e^{-(2+\frac{2}{p-1})\tau}\Big\|\partial_2\tilde{A}\left(\tau,.,\Re(u_1+iu_3),\Im(u_1+iu_3)\right)
\\
&-\partial_2\tilde{A}\left(\tau,.,\Re(v_1+iv_3),\Im(v_1+iv_3)\right)\Big\|_{L^2(\B^3_1)}
\\
&+(T-T_0)^2e^{-2\tau}\bigg(\left\|\partial_3\tilde{A}\left(\tau,.,\Re(v_1+iv_3),\Im(v_1+iv_3)\right)\left(u_1'+iu_3'-(v_1'+iv_3')\right)\right\|_{L^2(\B^3_1)}
\\
&+\Big\|(u_1'+iu_3')\Big(\partial_3\tilde{A}\left(\tau,.,\Re(u_1+iu_3),\Im(u_1+iu_3)\right)
\\
&-\partial_3\tilde{A}\left(\tau,.,\Re(v_1+iv_3),\Im(v_1+iv_3)\right)\Big)\Big\|_{L^2(\B^3_1)}
\\
&+\left\|\partial_4\tilde{A}\left(\tau,.,\Re(v_1+iv_3),\Im(v_1+iv_3)\right)\left((u_1'+iu_3')-(v_1'+iv_3')\right)\right\|_{L^2(\B^3_1)}
\\
&+\Big\|(u_1'+iu_3')\Big(\partial_4\tilde{A}\left(\tau,.\Re(u_1+iu_3),\Re(u_1+iu_3)\right)
\\
&-\partial_4\tilde{A}\left(\tau,.,\Re(v_1+iv_3),\Im(v_1+iv_3)\right)\Big)\Big\|_{L^2(\B^3_1)}
\bigg)
\\
\lesssim& (T-T_0)^{\tilde{q}}e^{-\tilde{q}\tau}\Big(\left\|(u_1+iu_3)|u_1+i u_3|^{q-1}-(v_1+iv_3)|v_1+i v_3|^{q-1}\right\|_{L^2(\B^3_1)}\quad\quad\quad\quad\quad\quad\;\;
\\
&+\left\|v_1+i v_3 |v_1+i v_3|^{q-2})\left((u'_1+iu'_3)-(v_1'+ i v_3')\right)\right\|_{L^2(\B^3_1)}
\\
&+\left\|(u'_1+iu'_3)\left((u_1+i u_3) |u_1+i u_3|^{q-2}-(v_1+i v_3) |v_1+i v_3|^{q-2}\right)\right\|_{L^2(\B^3_1)}
\Big)
\\
\lesssim&
(T-T_0)^{\tilde{q}}e^{-\tilde{q}\tau}
\|\uf-\vf\|\Big(\|\uf\|^{q-1}+ \|\vf\|^{q-1}
\\
&+\|\uf\|\left(\left\||u_1+i u_3|^{q-2}\right\|_{L^\infty(\B^3_1)}+\left\||v_1+i v_3|^{q-2}\right\|_{L^\infty(\B^3_1)}\Big)
\right),
\end{align*}
for $\uf,\vf \in \mathcal{H}$ and $\tau \geq 0$.
Next, we again use Hölder's inequality and the Sobolev inequality $\|.\|_{L^4(\B^3_1)}\lesssim \|.\|_{H^1(\B^3_1)},$ in addition to the estimates on the perturbation, to obtain
\\
\begin{align*}
I_2
\lesssim &\sum_{l=1}^4\bigg[(T-T_0)e^{-\tau}\bigg(\left\| \partial_2\tilde{B}\left(\tau,.,\Re(v_1+iv_3),\Im(v_1+iv_3)\right)(u_l-v_l)\right\|_{L^2(\B^3_1)}
\\
&+\Big\|u_l\Big(\partial_2\tilde{B}\left(\tau,.,\Re(u_1+iu_3),\Im(u_1+iu_3)\right) 
\\
&-\partial_2\tilde{B}\left(\tau,.,\Re(v_1+iv_3),\Im(v_1+iv_3)\right)\Big)\Big\|_{L^2(\B^3_1)}
\bigg)
\\
&+(T-T_0)^{\tilde{q}}e^{-\tilde{q}\tau}\sum_{k=3,4}\bigg(\Big\|\partial_k\tilde{B}\left(\tau,.,\Re(u_1+iu_3),\Im(u_1+iu_3)\right)\\
&\times u_l(u_1'+iu_3'-(v_1'+iv_3'))\Big\|_{L^2(\B^3_1)}
\\
&+\left\|\partial_k\tilde{B}\left(\tau,.,\Re(u_1+iu_3),\Im(u_1+iu_3)\right)(v_1'+iv_3')(u_l-v_l) \right\|_{L^2(\B^3_1)}\bigg)
\\
&+(T-T_0)e^{-\tau}\Big\|\tilde{B}\left(\tau,.,\Re(v_1+iv_3),\Im(v_1+iv_3)\right) (u_l'-v_l')\Big\|_{L^2(\B^3_1)}
\\
&+(T-T_0)e^{-\tau}\Big\|u_l'\big(\tilde{B}\left(\tau,.,\Re(u_1+iu_3),\Im(u_1+iu_3)\right)
\\
&-\tilde{B}\left(\tau,.,\Re(v_1+iv_3),\Im(v_1+iv_3)\right)\big)\Big\|_{L^2(\B^3_1)}\bigg]
\\
\lesssim&(T-T_0)^{\tilde{q}}e^{-\tilde{q}\tau}\sum_{l=1}^4\Big(\|(1+|v_1+i v_3|)(u_l-v_l)\|_{L^2(\B^3_1)}+\|u_l(u_1+i u_3-(v_1+ iv_3))\|_{L^2(\B^3_1)}
\\
&+\left\|u_l\left(u_1'+iu'_3-(v_1'+i v_3')\right)\right\|_{L^2(\B^3_1)}
+\left\|(v_1'+i v_3')\left(u_l-v_l\right)\right\|_{L^2(\B^3_1)}
\\
&+ \|(1+|v_1+ iv_3|)(u_l'-v_l')\|_{L^2(\B^3_1)}+\|u_l'(u_1+i u_3-(v_1+ iv_3))\|_{L^2(\B^3_1)}\Big)
\\
\lesssim& (T-T_0)^{\tilde{q}}e^{-\tilde{q}\tau}\bigg(\|\uf-\vf\|(1+\|\uf\|+\|\vf\|)
\\
&+\sum_{l=1}^4\Big(\|u_l\|_{L^4(\B^3_1)}\|(u_1'+iu'_3)-(v_1'+i v_3')\|_{L^4(\B^3_1)}
\\
&+\|v_1'+i v_3'\|_{L^4(\B^3_1)}\|u_l- v_l\|_{L^4(\B^3_1)}
+\left(1+\|u_1+iu_3\|_{H^2(\B^3_1)}\right)\|u'_l-v'_l\|_{L^2(\B^3_1)}
\\
&+\|v_l'\|_{L^2(\B^3_1)}\|(u_1+ i u_3)-(v_1+ i v_3)\|_{H^2(\B^3_1)}\Big)\bigg)
\\
\lesssim& (T-T_0)^{\tilde{q}}e^{-\tilde{q}\tau}\|\uf-\vf\|\big(
1+\|\uf\|+\|\vf\|\big),
\end{align*}
for $\uf,\vf \in \mathcal{H}$ and $\tau \geq 0$.
Since one can obtain the estimate 
\begin{align*}
I_3\lesssim(T-T_0)^{\tilde{q}}e^{-\tilde{q}\tau}\|\uf-\vf\|\big(
1+\|\uf\|+\|\vf\|\big)
\end{align*}
analogously, the proof of this Lemma is finished.
\end{proof}
These two Lemmas will be vital for the fixed point argument which will be done later on. We continue by employing Duhamel's Principle, to rewrite Eq.~\eqref{eqtosovle} as an integral equation, which, for any initial data $\Phi(0)=\uf \in \mathcal{H}$, takes the form
\begin{align} \label{duhameleq}
\Phi(\tau)=& \Sf_{\theta_\infty}(\tau)\uf+\int_{0}^{\tau}\Sf_{\theta_\infty}(\tau-\sigma)\bigg(\hat{\Lf}_{\theta(\sigma)}\Phi(\sigma)+\N_{\theta(\sigma)}(\Phi(\sigma))\nonumber\\&+ \Vf(\Phi(\sigma)+\Psi_{\theta(\sigma)},\sigma)-\partial_\sigma \Psi_{\theta(\sigma)}\bigg) \,d\sigma
\end{align}
for $\tau \geq 0$ and with the abbreviation $\hat{\Lf}_{\theta(\sigma)}=\Lf'_{\theta(\sigma)}-\Lf'_{\theta_\infty}$. To analyse this equation further, we need the correct functional analytic setting and therefore introduce the two Banach spaces
$(\X,\|.\|_{\X})$ and $(X, \|.\|_X)$ as follows.

\begin{defi}
Set $\mathcal{X}:=\{\Phi \in C([0,\infty),\mathcal{H}):\|\Phi\|_{\X} < \infty \}$, with
$$\|\Phi\|_{\X}:=\sup_{\tau \geq 0}[e^{\omega_p \tau}\|\Phi(\tau)\|]
,$$
with $\omega_p:=\min\{\frac{\tilde{q}}{2},\frac{1}{p-1}\},$ where $\tilde{q}$ is the constant from Lemma \ref{lemonV}.
\end{defi}
The second Banach space that will be needed is the following. 
\begin{defi}
Let $X:=\{\theta \in C^1([0,\infty),\R):\theta(0)=0, \|\theta\|_X < \infty\},
$
where $$
\|\theta\|_X:=\sup_{\tau\geq 0 }\left[e^{\omega_p \tau}|\dot{\theta}(\tau)|+|\theta(\tau)|\right]. $$
\end{defi}
By $\X_\delta $ and $ X_\delta$ we denote the closed balls of radius $\delta$ in the corresponding norms. \\
Now follow two more lemmas that provide useful estimates.
\begin{lem}\label{manybounds}
Let $\Phi \in \X_\delta$ and $ \theta \in X_\delta$ with $0\leq\delta\leq \delta_0$ and $\delta_0$ sufficiently small. Further let $T_0 \in [1-\frac{3\delta}{c},1-\frac{2\delta}{c}]$ and $T \in [1-\frac{\delta}{c},1+\frac{\delta}{c}]$ for $c\geq 1$ sufficiently large. Then, we have the estimates
\begin{align*}
\|\hat{\Lf}_{\theta(\tau)}\Phi(\tau)\|+\|\N_{\theta(\tau)}(\Phi(\tau))\|&\lesssim \delta^2 e^{-2\omega_p\tau}\\
\|(\I- \Pf_{\theta_\infty} )\partial_\tau \Psi_{\theta(\tau)}\|+\|\Qf_{\theta_\infty}\partial_\tau \Psi_{\theta(\tau)}\|&\lesssim \delta^2 e^{-2\omega_p\tau}\\
\|\Vf(\Psi_{\theta(\tau)}+\Phi(\tau),\tau)\|&\lesssim \delta^2 e^{-2\omega_p\tau}
\end{align*} 
for all $\tau \geq 0$ and $\delta \in [0,\delta_0]$.
\end{lem}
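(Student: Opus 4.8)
This is essentially a bookkeeping lemma: every hard estimate has already been established, and the work consists in tracking the exponential weights and the smallness coming from $T-T_0$. Note first that $\Phi\in\X_\delta$ gives $\|\Phi(\tau)\|\le\delta e^{-\omega_p\tau}$, that $\theta\in X_\delta$ gives $|\dot\theta(\tau)|\le\delta e^{-\omega_p\tau}$ and $|\theta(\tau)|\le\delta$, and hence that
\[
|\theta(\tau)-\theta_\infty|\le\int_\tau^\infty|\dot\theta(\sigma)|\,d\sigma\le\frac{\delta}{\omega_p}\,e^{-\omega_p\tau}\lesssim\delta e^{-\omega_p\tau};
\]
throughout we take $\delta_0$ so small that $|\theta(\tau)|$ and $|\theta_\infty|$ stay in the range where Lemmas \ref{liplem1}, \ref{nonesti} and \ref{liponrg} apply.

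For the first line, $\hat{\Lf}_{\theta(\tau)}=\Lf'_{\theta(\tau)}-\Lf'_{\theta_\infty}$, so Lemma \ref{liplem1} gives $\|\hat{\Lf}_{\theta(\tau)}\|\lesssim|\theta(\tau)-\theta_\infty|\lesssim\delta e^{-\omega_p\tau}$, whence $\|\hat{\Lf}_{\theta(\tau)}\Phi(\tau)\|\lesssim\delta^2 e^{-2\omega_p\tau}$. For the nonlinear term we apply Lemma \ref{nonesti} with $\vf=\textbf{0}$ and $\theta_1=\theta_2=\theta(\tau)$, together with $\N_{\theta(\tau)}(\textbf{0})=\textbf{0}$, which yields $\|\N_{\theta(\tau)}(\Phi(\tau))\|\lesssim\|\Phi(\tau)\|^2\lesssim\delta^2 e^{-2\omega_p\tau}$.

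For the second line the key observation is that differentiating the explicit expression for $\Psi_\theta$ with respect to $\theta$ reproduces exactly the eigenfunction $\rf_\theta$, so that $\partial_\tau\Psi_{\theta(\tau)}=\dot\theta(\tau)\,\rf_{\theta(\tau)}$. Since $\rg\Pf_\theta=\Span\{\rf_\theta\}$ and $\Pf_\theta\Qf_\theta=\Qf_\theta\Pf_\theta=0$, we have $\Pf_{\theta(\tau)}\rf_{\theta(\tau)}=\rf_{\theta(\tau)}$ and $\Qf_{\theta(\tau)}\rf_{\theta(\tau)}=\textbf{0}$, hence
\[
(\I-\Pf_{\theta_\infty})\rf_{\theta(\tau)}=(\Pf_{\theta(\tau)}-\Pf_{\theta_\infty})\rf_{\theta(\tau)},\qquad \Qf_{\theta_\infty}\rf_{\theta(\tau)}=(\Qf_{\theta_\infty}-\Qf_{\theta(\tau)})\rf_{\theta(\tau)}.
\]
As $\|\rf_{\theta(\tau)}\|$ is a constant independent of $\theta$, the projection Lipschitz bounds of Lemma \ref{liponrg} show that both right-hand sides are $\lesssim|\theta(\tau)-\theta_\infty|\lesssim\delta e^{-\omega_p\tau}$, and multiplying by $|\dot\theta(\tau)|\le\delta e^{-\omega_p\tau}$ gives the claimed bound.

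For the third line we invoke the first estimate of Lemma \ref{lemonV} with $\uf=\Psi_{\theta(\tau)}+\Phi(\tau)$. Since $\Psi_\theta$ is a constant vector whose norm does not depend on $\theta$, $\|\Psi_{\theta(\tau)}\|\lesssim1$, and together with $\|\Phi(\tau)\|\le\delta\le1$ this gives $\|\uf\|\lesssim1$, so $1+\|\uf\|^{q}+\|\uf\|^{2}\lesssim1$ and hence $\|\Vf(\Psi_{\theta(\tau)}+\Phi(\tau),\tau)\|\lesssim(T-T_0)^{\tilde q}e^{-\tilde q\tau}$. Now $2\omega_p\le\tilde q$ by the definition of $\omega_p$, so $e^{-\tilde q\tau}\le e^{-2\omega_p\tau}$, while the constraints on $T$ and $T_0$ give $0<T-T_0\le\frac{4\delta}{c}$; since $\tilde q>0$, taking $c$ sufficiently large makes $(T-T_0)^{\tilde q}$ small enough that $\|\Vf(\Psi_{\theta(\tau)}+\Phi(\tau),\tau)\|\lesssim\delta^2 e^{-2\omega_p\tau}$. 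This last step — forcing the gain $(T-T_0)^{\tilde q}$ from Lemma \ref{lemonV}, which is the rigorous incarnation of the heuristic that the perturbation $F$ is negligible near the blowup time, to fit inside the $\delta^2 e^{-2\omega_p\tau}$ budget, so that the light-cone must be taken narrow (equivalently, $c$ large) — is the only place that requires care; the remaining estimates are routine applications of the Lipschitz and mapping lemmas once the exponential weights are accounted for.
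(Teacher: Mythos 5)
Your proof is correct and follows essentially the same route as the paper: Lipschitz continuity of $\theta\mapsto\Lf_\theta'$ plus the decay of $\Phi$, the quadratic bound of Lemma \ref{nonesti}, the identity $\partial_\tau\Psi_{\theta(\tau)}=\dot\theta(\tau)\rf_{\theta(\tau)}$ combined with Lemma \ref{liponrg}, and Lemma \ref{lemonV} with $\tilde q\ge 2\omega_p$ and $T-T_0\lesssim\delta/c$. The only cosmetic difference is in the second estimate, where you shift the $\theta$-dependence onto the projections via $\|\Pf_{\theta(\tau)}-\Pf_{\theta_\infty}\|,\ \|\Qf_{\theta(\tau)}-\Qf_{\theta_\infty}\|\lesssim|\theta(\tau)-\theta_\infty|$, while the paper splits $\rf_{\theta(\tau)}=\rf_{\theta_\infty}+\hat\rf_{\theta(\tau)}$ and uses $(\I-\Pf_{\theta_\infty})\rf_{\theta_\infty}=\Qf_{\theta_\infty}\rf_{\theta_\infty}=\textbf{0}$ together with the Lipschitz bound on $\rf_\theta$ — both are applications of Lemma \ref{liponrg}.
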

\begin{proof}
By assumption $\theta$ is at least once continuously differentiable and therefore
\begin{align*}
|\theta(\tau_1)-\theta(\tau_2)|&\leq \int_{\tau_1}^{\tau_2}|\dot{\theta}(\sigma)|\, d\sigma\\
&\lesssim \delta\left( e^{-\omega_p\tau_1}+e^{-\omega_p\tau_2}\right).
\end{align*}
Since the expression $\delta\left( e^{-\omega_p\tau_1}+e^{-\omega_p\tau_2}\right)$ tends to $0$ as $\tau_1,\tau_2 \rightarrow \infty$, the limit $\theta_\infty:=\lim_{\tau\rightarrow \infty} \theta(\tau)$ exists and we even have the estimate
\begin{align*}
|\theta_\infty-\theta(\tau)|\leq \int_{\tau}^\infty |\dot{\theta}(\sigma)|\, d \sigma \lesssim \delta e^{-\omega_p \tau}.
\end{align*}
Hence Lemma \ref{liplem1} yields
\begin{align*}
\|\hat{\Lf}_{\theta(\tau)}\Phi(\tau)\|\leq \|\Lf_{\theta(\tau)}'-\Lf_{\theta_\infty}\|\, \|\Phi(\tau)\|\lesssim \delta e^{-\omega_p \tau}|\theta(\tau)-\theta_\infty|\lesssim \delta^2e^{-2\omega_p\tau}.
\end{align*}
Further, as $\N$ satisfies the quadratic estimate proven before, we have
$$\|\N_{\theta(\tau)}(\Psi(\tau))\|\lesssim \delta^2e^{-2\omega_p \tau}
$$
which establishes the first estimate. For the second one, note that 
\begin{align*}
\partial_\tau \Psi_{\theta(\tau)}=\dot{\theta}(\tau)\begin{pmatrix}
&-\Psi_{\theta,3}\,\,\,\,\\
&-\Psi_{\theta,4}\,\,\,\,\\
&\,\,\Psi_{\theta,1}\\
&\,\,\Psi_{\theta,2}
\end{pmatrix}
\end{align*}
which equals $\dot{\theta}(\tau)\rf_{\theta(\tau)}$. Set $\hat{\rf}_\theta:= \rf_{\theta(\tau)}-\rf_{\theta_\infty}$, to obtain
\begin{align*}
\|(\I-\Pf_{\theta_{\infty}}) \partial_\tau\Psi_{\theta(\tau)}\|&\lesssim\|(\I-\Pf_{\theta{_\infty}})\dot{\theta}(\tau) \rf_{\theta_\infty}\|+\|(\I-\Pf_{\theta_{\infty}})\dot{\theta}(\tau)\hat{\rf}_{\theta(\tau)}\|\\
&\lesssim |\dot{\theta}(\tau)|\,\|\hat{\rf}_{\theta(\tau)}\|\\
&\lesssim \delta e^{-\omega_p \tau}|\theta(\tau)-\theta_\infty|\\
&\lesssim \delta^2 e^{-2\omega_p \tau}.
\end{align*}
The estimate on $\Qf_{\theta_\infty}\partial_\tau \Psi_{\theta(\tau)}$ now follows from the same calculations since $\Qf_{\theta_\infty}\rf_{\theta_\infty}=\textup{\textbf{0}}$.
To obtain the last estimate, note that
\begin{align*}
\|\Vf(\Psi_{\theta(\tau)}(\tau)+\Phi(\tau),\tau)\|&\lesssim(T-T_0)^{\tilde{q}} e^{-\tilde{q}\tau}\left(1+\|\Psi_{\theta(\tau)}(\tau)+\Phi(\tau)\|^{q}+\|\Psi_{\theta(\tau)}(\tau)+\Phi(\tau)\|^{2}
\right)\\
&\lesssim (T-T_0)^{\tilde{q}} e^{-2\omega_p\tau}\\
&\lesssim \delta^2 e^{-2\omega_p\tau},
\end{align*}
provided $c$ is chosen large enough.
\end{proof}
Next, we also derive corresponding Lipschitz bounds.
\begin{lem}\label{manylip}
Let $\delta_0>0$ be small enough and $T_0 \in [1-\frac{3\delta}{c},1-\frac{2\delta}{c}]$ as well as $T \in [1-\frac{\delta}{c},1+\frac{\delta}{c}]$ where $c\geq 1$. Then, provided $c$ is chosen large enough, we have the estimates
\begin{align*}
\|\hat{\Lf}_{\theta_1(\tau)}\Phi_1(\tau)-\hat{\Lf}_{\theta_2(\tau)}\Phi_2(\tau)\|&\lesssim\delta e^{-2\omega_p \tau}\left(\|\Phi_1-\Phi_2\|_{\mathcal{X}}+\|\theta_1-\theta_2\|_X\right)\\
\|\N_{\theta_1(\tau)}(\Phi_1(\tau))-\N_{\theta_2(\tau)}(\Phi_2(\tau))\|&\lesssim\delta e^{-2\omega_p \tau}\left(\|\Phi_1-\Phi_2\|_{\mathcal{X}}+\|\theta_1-\theta_2\|_X\right)\\
\|(\I-\Pro_{\theta_{1_\infty}})\partial_\tau \Psi_{\theta_1(\tau)}-(\I-\Pro_{\theta_{2_\infty}})\partial_\tau\Psi_{\theta_2(\tau)}\|&\lesssim\delta e^{-2\omega_p\tau}\|\theta_1-\theta_2\|_{X}\\
\|\Qf_{\theta_{1_\infty}}\partial_\tau \Psi_{\theta_1(\tau)}-\Qf_{\theta_{2_\infty}}\partial_\tau \Psi_{\theta_2(\tau)}\|&\lesssim \delta e^{-2\omega_p \tau}\|\theta_1-\theta_2\|_X\\
\text{and}\\
\|\Vf(\Phi(\tau)_1+\Psi_{\theta_1(\tau)},\tau)-\Vf(\Phi_2(\tau)+\Psi_{\theta_2(\tau)},\tau)\|&\lesssim \delta e^{-2\omega_p \tau}\left(\|\Phi_1-\Phi_2\|_{\mathcal{X}} + \|\theta_1-\theta_2\|_X\right),
\end{align*}
for any $\Phi_1, \Phi_2 \in \mathcal{X}_\delta, \theta_1, \theta_2 \in X_\delta$, $\tau \geq 0$, and $\delta \in [0,\delta_0]$.
\end{lem}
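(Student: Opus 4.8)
The plan is to derive all five bounds mechanically from the Lipschitz estimates already established --- Lemma~\ref{liplem1} for $\Lf_\theta'$, Lemma~\ref{nonesti} for $\N_\theta$, Lemma~\ref{liponrg} for $\rf_\theta,\gf_\theta,\Pf_\theta,\Qf_\theta$, and Lemma~\ref{lemonV} for $\Vf$ --- together with the two decay facts isolated in the proof of Lemma~\ref{manybounds}: for $\theta_j\in X_\delta$ one has $|\dot\theta_j(\tau)|\le\delta e^{-\omega_p\tau}$ and hence $|\theta_j(\tau)-\theta_{j_\infty}|=\bigl|\int_\tau^\infty\dot\theta_j(\sigma)\,d\sigma\bigr|\lesssim\delta e^{-\omega_p\tau}$, plus the pointwise bound $\|\Phi_j(\tau)\|\le\delta e^{-\omega_p\tau}$ valid for $\Phi_j\in\X_\delta$. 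The recurring device is to split each difference by adding and subtracting an intermediate term, so that one factor carries a Lipschitz bound in its argument and the other carries a decay or smallness bound. The one delicate point, common to the first four estimates, is that the \emph{difference} in the $\theta$-slot must itself decay like $e^{-\omega_p\tau}$, so that its product with $\|\Phi_j(\tau)\|$ or $|\dot\theta_j(\tau)|$ reaches the required $e^{-2\omega_p\tau}$; for this I would use $(\theta_1(\tau)-\theta_{1_\infty})-(\theta_2(\tau)-\theta_{2_\infty})=-\int_\tau^\infty(\dot\theta_1(\sigma)-\dot\theta_2(\sigma))\,d\sigma$, which the definition of $\|\cdot\|_X$ bounds by $\lesssim e^{-\omega_p\tau}\|\theta_1-\theta_2\|_X$, while $|\theta_{1_\infty}-\theta_{2_\infty}|\lesssim\|\theta_1-\theta_2\|_X$ and $|\dot\theta_1(\tau)-\dot\theta_2(\tau)|\le e^{-\omega_p\tau}\|\theta_1-\theta_2\|_X$.

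For the first estimate I would write $\hat{\Lf}_{\theta_1(\tau)}\Phi_1(\tau)-\hat{\Lf}_{\theta_2(\tau)}\Phi_2(\tau)=\hat{\Lf}_{\theta_1(\tau)}(\Phi_1(\tau)-\Phi_2(\tau))+(\hat{\Lf}_{\theta_1(\tau)}-\hat{\Lf}_{\theta_2(\tau)})\Phi_2(\tau)$. Since $\hat{\Lf}_{\theta(\tau)}=\Lf_{\theta(\tau)}'-\Lf_{\theta_\infty}'$, Lemma~\ref{liplem1} gives $\|\hat{\Lf}_{\theta_1(\tau)}\|\lesssim|\theta_1(\tau)-\theta_{1_\infty}|\lesssim\delta e^{-\omega_p\tau}$, which against $\|\Phi_1(\tau)-\Phi_2(\tau)\|\le e^{-\omega_p\tau}\|\Phi_1-\Phi_2\|_\X$ disposes of the first term; for the second, a mean value argument (the entries of $\Lf_\theta'$ being trigonometric polynomials in $\theta$) writes $\hat{\Lf}_{\theta_1(\tau)}-\hat{\Lf}_{\theta_2(\tau)}$ as a bounded operator of norm $\lesssim|\theta_1(\tau)-\theta_{1_\infty}|\,\|\theta_1-\theta_2\|_X+\bigl|(\theta_1(\tau)-\theta_{1_\infty})-(\theta_2(\tau)-\theta_{2_\infty})\bigr|\lesssim e^{-\omega_p\tau}\|\theta_1-\theta_2\|_X$ by the identities above, and this against $\|\Phi_2(\tau)\|\lesssim\delta e^{-\omega_p\tau}$ gives the rest. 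The second estimate is immediate from Lemma~\ref{nonesti} applied at each $\tau$ with $\uf=\Phi_1(\tau)$, $\vf=\Phi_2(\tau)$ and modulation parameters $\theta_1(\tau),\theta_2(\tau)$: one inserts $\|\Phi_j(\tau)\|\le\delta e^{-\omega_p\tau}$ and $|\theta_1(\tau)-\theta_2(\tau)|\le\|\theta_1-\theta_2\|_X$ and uses $\delta^2\le\delta$.

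For the third and fourth estimates I would use $\partial_\tau\Psi_{\theta(\tau)}=\dot\theta(\tau)\,\rf_{\theta(\tau)}$ from the proof of Lemma~\ref{manybounds}, together with the relations $\Pf_\theta\rf_\theta=\rf_\theta$ (as $\rg\Pf_\theta=\Span\{\rf_\theta\}$) and $\Qf_\theta\Pf_\theta=0$ (Proposition~\ref{projections}), which give $(\I-\Pf_{\theta_\infty})\rf_{\theta(\tau)}=(\I-\Pf_{\theta_\infty})(\rf_{\theta(\tau)}-\rf_{\theta_\infty})$ and $\Qf_{\theta_\infty}\rf_{\theta(\tau)}=\Qf_{\theta_\infty}(\rf_{\theta(\tau)}-\rf_{\theta_\infty})$, both of norm $\lesssim|\theta(\tau)-\theta_\infty|\lesssim\delta e^{-\omega_p\tau}$ by Lemma~\ref{liponrg}. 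Then I would split the difference as $\dot\theta_1(\tau)\bigl[(\I-\Pf_{\theta_{1_\infty}})\rf_{\theta_1(\tau)}-(\I-\Pf_{\theta_{2_\infty}})\rf_{\theta_2(\tau)}\bigr]+\bigl(\dot\theta_1(\tau)-\dot\theta_2(\tau)\bigr)(\I-\Pf_{\theta_{2_\infty}})\rf_{\theta_2(\tau)}$, expand the bracket by inserting $(\I-\Pf_{\theta_{1_\infty}})(\rf_{\theta_2(\tau)}-\rf_{\theta_{2_\infty}})$, and conclude using the Lipschitz bounds of Lemma~\ref{liponrg} for $\Pf_\theta$ and $\rf_\theta$ together with $|\theta_{1_\infty}-\theta_{2_\infty}|\lesssim\|\theta_1-\theta_2\|_X$, $\bigl|(\theta_1(\tau)-\theta_{1_\infty})-(\theta_2(\tau)-\theta_{2_\infty})\bigr|\lesssim e^{-\omega_p\tau}\|\theta_1-\theta_2\|_X$, $|\dot\theta_1(\tau)|\lesssim\delta e^{-\omega_p\tau}$ and $|\dot\theta_1(\tau)-\dot\theta_2(\tau)|\le e^{-\omega_p\tau}\|\theta_1-\theta_2\|_X$. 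The fourth estimate is the same computation with $\Qf_{\theta_{j_\infty}}$ in place of $\I-\Pf_{\theta_{j_\infty}}$.

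The last estimate is the heaviest, and I expect it to be the main obstacle. I would apply the Lipschitz bound of Lemma~\ref{lemonV} at each $\tau$ with $\uf=\Phi_1(\tau)+\Psi_{\theta_1(\tau)}$ and $\vf=\Phi_2(\tau)+\Psi_{\theta_2(\tau)}$. Here $\|\uf\|,\|\vf\|$ are only bounded, not small, because $\|\Psi_\theta\|\simeq\kappa_p$, so the whole smallness factor $\delta$ must come from the prefactor $(T-T_0)^{\tilde q}e^{-\tilde q\tau}$, and one must check that the bracket $1+\|\uf\|+\|\vf\|+\|\uf\|^{q-1}+\|\vf\|^{q-1}+\|\uf\|\bigl(\||u_1+iu_3|^{q-2}\|_{L^\infty(\B^3_1)}+\||v_1+iv_3|^{q-2}\|_{L^\infty(\B^3_1)}\bigr)$ from Lemma~\ref{lemonV} is $\lesssim1$; the only nontrivial term, $\||u_1+iu_3|^{q-2}\|_{L^\infty(\B^3_1)}$, is controlled by noting that $\Psi_\theta$ has constant modulus $\kappa_p$ while $\|\Phi_j(\tau)\|_{L^\infty}\lesssim\|\Phi_j(\tau)\|\le\delta$ via $H^2(\B^3_1)\hookrightarrow L^\infty$, so $|u_1+iu_3|\ge\kappa_p/2$ on $\B^3_1$ once $\delta$ is small enough and the power $q-2$ is harmless. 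Then $\|\uf-\vf\|\le\|\Phi_1(\tau)-\Phi_2(\tau)\|+\|\Psi_{\theta_1(\tau)}-\Psi_{\theta_2(\tau)}\|\lesssim e^{-\omega_p\tau}\|\Phi_1-\Phi_2\|_\X+\|\theta_1-\theta_2\|_X$ (the second summand by smoothness of $\theta\mapsto\Psi_\theta$), while $e^{-\tilde q\tau}\le e^{-2\omega_p\tau}$ since $\omega_p\le\tilde q/2$ and $(T-T_0)^{\tilde q}\lesssim\delta$ for $c$ large, exactly as in the proof of Lemma~\ref{manybounds} using $T-T_0\in[\tfrac{\delta}{c},\tfrac{4\delta}{c}]$. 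Beyond the sheer bookkeeping, the crux here is keeping track of where the factor $\delta$ originates and securing the pointwise lower bound on $|u_1+iu_3|$ that neutralises the possibly-negative power $q-2$.
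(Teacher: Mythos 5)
Your proposal is correct and follows essentially the same route as the paper: Lemmas \ref{liplem1}, \ref{nonesti}, \ref{liponrg}, and \ref{lemonV} combined with the decay facts $|\dot\theta_j(\tau)|\le\delta e^{-\omega_p\tau}$, $|\theta_j(\tau)-\theta_{j_\infty}|\lesssim\delta e^{-\omega_p\tau}$, the key second-difference bound $|(\theta_1(\tau)-\theta_{1_\infty})-(\theta_2(\tau)-\theta_{2_\infty})|\lesssim e^{-\omega_p\tau}\|\theta_1-\theta_2\|_X$, the identity $\partial_\tau\Psi_{\theta(\tau)}=\dot\theta(\tau)\rf_{\theta(\tau)}$ with $(\I-\Pf_{\theta_\infty})\rf_{\theta_\infty}=\Qf_{\theta_\infty}\rf_{\theta_\infty}=\textbf{0}$, and for $\Vf$ the Sobolev lower bound $|u_1+iu_3|\gtrsim\kappa_p$ near $\Psi_\theta$ together with $(T-T_0)^{\tilde q}\lesssim\delta$ for $c$ large. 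The only cosmetic difference is that you realize the second-difference estimates for $\hat{\Lf}_{\theta}$ and $\hat{\rf}_{\theta}$ via a mean-value argument, whereas the paper writes them as $\int_\tau^\infty$ of $\dot\theta_j$ times the $\theta$-derivative and splits inside the integral; these are the same computation.
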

\begin{proof}
Recall that 
\begin{align*}
\left[\Lf'_{\theta(\tau)} \uf\right]_2(\rho)=& c_p(p-1)\left(\cos(\theta(\tau))^2u_1(\rho)+\cos(\theta(\tau))\sin(\theta(\tau))u_3(\rho)\,\right)\\
&+c_p u_1(\rho).
\end{align*}
Therefore 
\begin{align*}
\left(\left[\Lf'_{\theta_\infty} \uf\right]_2-\left[\Lf'_{\theta(\tau)} \uf\right]_2\right)(\rho)=&
\int_\tau^\infty \dot{\theta}(\sigma)\bigg{(}c_p(p-1)\big(-2\cos(\theta(\sigma))\sin(\theta(\sigma))u_1(\rho)\\
&+\left(\cos(\theta)^2-\sin(\theta)^2\right)u_3(\rho)\,\big) 
\bigg{)}d\sigma.
\end{align*}
Thus, by setting $l_1(\theta):=-2\cos(\theta)\sin(\theta)$ and $l_2(\theta):=\cos(\theta)^2-\sin(\theta)^2$, we obtain
\begin{align*}
\left\|\left[\left(\hat{\Lf}_{\theta_1(\tau)}-\hat{\Lf}_{\theta_2(\tau)}\right)\uf\right]_2\right\|_{H^1(\B^3_1)}&\lesssim\bigg{(}\int_\tau^\infty |\dot{\theta}_1(\sigma)l_1(\theta_1(\sigma))-\dot{\theta}_2(\sigma)l_1(\theta_2(\sigma))|d\sigma\\
&+\int_\tau^\infty |\dot{\theta}_1(\sigma)l_2(\theta_1(\sigma))-\dot{\theta}_2(\sigma)l_2(\theta_2(\sigma))| d\sigma \bigg{)}\|\uf\|\\
&\lesssim \sum_{j=1}^2\bigg{(}\int_\tau^\infty|\dot{\theta}_1(\sigma)\left(l_j(\theta_1(\sigma))-l_j(\theta_2(\sigma))\right)| d \sigma\\
&+\int_\tau^\infty |(\dot{\theta}_1(\sigma)-\dot{\theta}_2(\sigma))l_j(\theta_2(\sigma))| d\sigma\bigg{)}\|\uf\|\\
&\lesssim \bigg{(}\int_\tau^\infty |\dot{\theta}_1(\sigma)|\, |\theta_1(\sigma)-\theta_2(\sigma)|\, +|\dot{\theta}_1(\sigma)-\dot{\theta}_2(\sigma)| d \sigma\bigg{)}\|\uf\|\\
&\lesssim \|\uf\| \int_\tau^\infty e^{-\omega_p \tau} \|\theta_1-\theta_2\|_{X} d\sigma\\
&\lesssim \|\uf\|e^{-\omega_p \tau} \|\theta_1-\theta_2\|_{X}.
\end{align*}
Analogously, one derives the bound $$\left\|\left[\left(\hat{\Lf}_{\theta_1(\tau)}-\hat{\Lf}_{\theta_2(\tau)}\right)\uf\right]_4\right\|_{H^1(\B^3_1)}\lesssim
 \|\uf\| e^{-\omega_p \tau} \|\theta_1-\theta_2\|_{X}.
$$
Hence 
\begin{align*}
\|\hat{\Lf}_{\theta_1(\tau)}\Phi_1(\tau)-\hat{\Lf}_{\theta_2(\tau)}\Phi_2(\tau)\|&\lesssim \|\hat{\Lf}_{\theta_1(\tau)}-\hat{\Lf}_{\theta_2(\tau)}\|\, \|\Phi_1(\tau)\|+\|\hat{\Lf}_{\theta_2(\tau)}\|\,\|\Phi_1(\tau)-\Phi_2(\tau)\|\\
&\lesssim \delta e^{-2\omega_p \tau}\left(\|\theta_1-\theta_2\|_{X}+\|\Phi_1-\Phi_2\|_{\mathcal{X}}\right).
\end{align*}
The estimate on the nonlinearity follows immediately from Lemma \ref{nonesti}.
To derive the third bound stated in the Lemma, recall that
 $(\I- \Pro_{\theta_\infty}) \partial_\tau \Psi_{\theta(\tau)}=\dot{\theta}(\tau)(\I-\Pro_{\theta_\infty}) \hat{\rf}_{\theta(\tau)}$ for any $\theta \in \R$. Furthermore, since the function $(\theta,\rho) \mapsto \rf_\theta(\rho)$
is smooth for any $\theta$, the representation
$$
\hat{\rf}_{\theta(\tau)}=-\int_\tau^\infty \partial_\sigma\rf_{\theta(\sigma)}(\rho) \,d\sigma
$$
implies that
\begin{align*}
\|\hat{\rf}_{\theta_1(\tau)}-\hat{\rf}_{\theta_2(\tau)}\|&\leq \int_\tau^\infty\|\partial_\sigma (\rf_{\theta_1(\sigma)}-\rf_{\theta_2(\sigma)})\| \,d\sigma\\
&\lesssim \int_\tau^\infty e^{-\omega_p\sigma}\|\theta_1-\theta_2\|_X \,d\sigma\\
&\lesssim e^{-\omega_p\tau}\|\theta_1-\theta_2\|_X .
\end{align*}

Thus we obtain
\begin{align*}
\|(\I-\Pro_{\theta_{1_\infty}})\partial_\tau \Psi_{\theta_1(\tau)}-(\I-\Pro_{\theta_{2_\infty}})\partial_\tau\Psi_{\theta_2(\tau)}\|\lesssim & \|\dot{\theta}_1(\tau)(\I-\Pro_{\theta_{1_\infty}}) \hat{\rf}_{\theta_1(\tau)}-\dot{\theta}_2(\tau) (\I-\Pro_{\theta_{2_{\infty}}}) \hat{\rf}_{\theta_2(\tau)}\|\\
\lesssim & 
\|\dot{\theta}_1(\tau)(\I-\Pro_{\theta_{1_\infty}})-\dot{\theta}_2(\tau)(\I-\Pro_{\theta_{2_\infty}})\|\,\|\hat{\rf}_{\theta_1(\tau)}\|\\
&+\|\dot{\theta}_2(\tau)(\I-\Pro_{\theta_{2_\infty}})\|\,\|\hat{\rf}_{\theta_1(\tau)}-\hat{\rf}_{\theta_2(\tau)}\|\\
\lesssim & \delta e^{-2\omega_p \tau}\|\theta_1-\theta_2\|_X.
\end{align*}
To prove the fourth estimate, note that we again have 
$$\Qr_{\theta_\infty} \partial_\tau \Psi_{\theta(\tau)}=\dot{\theta}(\tau)\Qr_{\theta_\infty}\hat{\rf}_{\theta(\tau)}$$  and the same considerations done to establish the third claim also yield the fourth one.
To establish the final inequality, note that for $\varepsilon$ small enough the Sobolev embedding $H^2(\B^3_1)\hookrightarrow L^\infty(\B^3_1)$ implies that $|u(\rho)|\leq\frac{1}{4}$, for all $u\in H^2(\B^3_1)$ with $\|u\|_{ H^2(\B^3_1)}\leq \varepsilon.$ This in turn implies that there exists a $k>0$ such at
$$ 
\left|[\Psi_{\theta(\tau)}]_1+i[\Psi_{\theta(\tau)}]_3+u_1+iu_3\right|\geq k,
$$
for all $\theta \in X_\delta$ and all $\uf \in \mathcal{H}$ with $\|\uf\|_{\mathcal{H}}\leq \delta,$ for $\delta <\varepsilon$.
Thus if $\delta$ is chosen small enough we have the estimate
\begin{align*}
	\|\Vf(\Phi_1(\tau)+\Psi_{\theta_1(\tau)},\tau)-\Vf(\Phi_2(\tau)+\Psi_{\theta_2(\tau)},\tau)\|\lesssim& (T-T_0)^{\tilde{q}}e^{-\tilde{q}\tau}\bigg( \|\Phi_1(\tau)-\Phi_2(\tau)\|\\
&+\|\Psi_{\theta_1(\tau)}-\Psi_{\theta_2(\tau)}\|
\bigg)\\
\lesssim& (T-T_0)^{\tilde{q}}e^{-2\omega_p\tau}\big(\|\Phi_1-\Phi_2\|_{\mathcal{X}}\\ &+\|\theta_1-\theta_2\|_X \big),
\end{align*}
due to Lemma \ref{lemonV}. Thus, the claim follows if $c$ is chosen large enough.
\end{proof}
Beginning with the next section, we always assume that $c,$ $T_0$ and $T$ are chosen such that the Lemmas \ref{manybounds} and \ref{manylip} hold, without stating it explicitly.
\section{Unstable subspaces}
Our next step is to deal with the unstable subspaces $\rg \,\Pro_{\theta_\infty}$ and $\rg \, \Qr_{\theta_\infty}$ which are induced by the invariances of our equation.
\subsection{The modulation equation}
The instability corresponding to the eigenvalue $0$ will be handled by modulation, i.e., by finding a function $\theta(\tau)$ such that the instability is completely suppressed. To derive an equation for such a $\theta$, we formally apply the projection $\Pf_{\theta_\infty}$ to Eq.~\ref{duhameleq}. This then yields
\begin{align}\label{goal}
\Pf_{\theta_\infty} \Phi(\tau) =&\Pf_{\theta_\infty} \uf \nonumber
\\
&+\Pf_{\theta_\infty}  \int_0^\tau \left(\hat{\Lf}_{\theta(\sigma)}\Phi(\sigma) +\N_{\theta(\sigma)}(\Phi(\sigma)+\Vf(\Phi(\sigma)+\Psi_{\theta(\sigma)}, \sigma)-\partial_\sigma \Psi_{\theta(\sigma)}\right)d\sigma.
\end{align}
The idea now is to set the right-hand side equal to zero. But as this would entail the boundary condition $\Pf_{\theta_\infty} \uf=\textup{\textbf{0}}$ for $\tau =0$, which is not always satisfied, we have to use a small trick. To this end, denote by $\chi:[0,\infty)\to [0,1]$ a smooth cut-off function that satisfies $\chi(\tau)=1$ for $\tau \in [0,1],\, \chi(\tau)=0$ for $ \tau \geq 4$ and finally $ |\chi'(\tau)| \leq 1$ for all $\tau \geq 0$. Next, we make the ansatz $\Pf_{\theta_\infty} \Phi(\tau)= \chi(\tau)\tilde{\rf}$ for some $\tilde{\rf} \in \rg \Pf_{\theta_\infty}$.
Since evaluation at the time $ \tau=0$ implies $\tilde{\rf}=\Pf_{\theta_\infty}\uf$, one obtains the modulation equation
\begin{align}\label{modeq}
\left(1-\chi(\tau)\right) &\Pf_{\theta_\infty} \uf\nonumber
\\+&\Pf_{\theta_\infty}  \int_0^\tau \left(\hat{\Lf}_{\theta(\sigma)}\Phi(\sigma) +\N_{\theta(\sigma)}(\Phi(\sigma)+\Vf(\Phi(\sigma)+\Psi_{\theta(\sigma)}, \sigma)-\partial_\sigma \Psi_{\theta(\sigma)}\right) \, d\sigma=0.
\end{align}
Now note that making the further assumption $\theta(0)=0$ yields
\begin{align*}
\Pf_{\theta_\infty}\int_0^{\tau} \partial_\sigma \Psi_{\theta(\sigma)} \, d\sigma &=\Pf_{\theta_\infty}\int_0^\tau \dot{\theta}(\sigma)\left(\rf_{\theta_\infty}+\hat{\rf}_{\theta(\sigma)}\right)\, d \sigma \\
&=\left(\theta(\tau) \rf_{\theta_\infty}+\Pf_{\theta_\infty}\int_0^\tau \dot{\theta}(\sigma)\hat{\rf}_{\theta(\sigma)} d \sigma\right).
\end{align*}
If we insert this into Eq.~\eqref{modeq}, we obtain
\begin{align}\label{modeq2}
\theta(\tau) \rf_{\theta_\infty}= \left(1-\chi(\tau)\right) \Pf_{\theta_\infty} \uf+&\Pf_{\theta_\infty}  \int_0^\tau \hat{\Lf}_{\theta(\sigma)}\Phi(\sigma) +\N_{\theta(\sigma)}\left(\Phi(\sigma)\right)+\Vf(\Phi(\sigma)+\Psi_{\theta(\sigma)}, \sigma) \, d\sigma
\\
-&\Pf_{\theta_\infty}\int_0^\tau \dot{\theta}(\sigma)\hat{\rf}_{\theta(\sigma)}\, d\sigma. \nonumber
\end{align}
The next Lemma will show that, provided $\Phi$ is sufficiently small in norm,  there is indeed a $\theta :[0,\infty) \rightarrow \R$ such that Eq.~\eqref{modeq2} is satisfied.

\begin{lem}\label{solutlem}
Suppose $\delta >0$ is sufficiently small and $c>1$ is sufficiently large. Furthermore let $\Phi \in \mathcal{X}_\delta$ and $\uf \in \mathcal{H}$ with $\|\uf\| \leq \frac{\delta}{c}$. Then there exists a unique function $\theta \in X_\delta$ such that $\theta$ satisfies Eq.~\eqref{modeq2} and such that the map
$\Phi \mapsto \theta: \mathcal{X}_\delta \subset \mathcal{X} \rightarrow X$ is Lipschitz-continuous.  
\end{lem}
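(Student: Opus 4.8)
The plan is to reduce the modulation equation \eqref{modeq2} to a scalar fixed-point equation for $\theta$ and then apply the contraction mapping principle in $X_\delta$.

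First I would exploit that $\rg\,\Pf_{\theta_\infty}=\Span\{\rf_{\theta_\infty}\}$ by Proposition \ref{projections} and that $\|\rf_{\theta_\infty}\|$ is bounded below uniformly for $|\theta_\infty|$ small. Hence there is a bounded linear functional $\Lambda_{\theta_\infty}$ on $\mathcal{H}$ with $\Pf_{\theta_\infty}\mathbf{w}=\Lambda_{\theta_\infty}(\mathbf{w})\,\rf_{\theta_\infty}$, $\Lambda_{\theta_\infty}(\rf_{\theta_\infty})=1$, depending Lipschitz-continuously on $\theta_\infty$ by Lemma \ref{liponrg}. Moreover, as already shown in the proof of Lemma \ref{manybounds}, for $\theta\in X_\delta$ the limit $\theta_\infty=\lim_{\tau\to\infty}\theta(\tau)$ exists, satisfies $|\theta_\infty|\lesssim\|\theta\|_X$ and $|\theta(\tau)-\theta_\infty|\lesssim\delta e^{-\omega_p\tau}$, and $\theta\mapsto\theta_\infty$ is Lipschitz $X\to\R$.

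Next I would differentiate \eqref{modeq2} in $\tau$. Since $\rf_{\theta_\infty}$ is a fixed vector, the term $\Pf_{\theta_\infty}\int_0^\tau\dot\theta(\sigma)\hat{\rf}_{\theta(\sigma)}\,d\sigma$ contributes $\dot\theta(\tau)\Pf_{\theta_\infty}\hat{\rf}_{\theta(\tau)}$ and one obtains
\begin{align*}
\dot\theta(\tau)\,\rf_{\theta_\infty}={}&-\chi'(\tau)\Pf_{\theta_\infty}\uf+\Pf_{\theta_\infty}\Big(\hat{\Lf}_{\theta(\tau)}\Phi(\tau)+\N_{\theta(\tau)}(\Phi(\tau))+\Vf(\Phi(\tau)+\Psi_{\theta(\tau)},\tau)\Big)\\
&-\dot\theta(\tau)\,\Pf_{\theta_\infty}\hat{\rf}_{\theta(\tau)}.
\end{align*}
Applying $\Lambda_{\theta_\infty}$ and writing $\Pf_{\theta_\infty}\hat{\rf}_{\theta(\tau)}=(\mu_\theta(\tau)-1)\rf_{\theta_\infty}$ with $\mu_\theta(\tau):=\Lambda_{\theta_\infty}(\rf_{\theta(\tau)})$ and $|\mu_\theta(\tau)-1|\lesssim\delta e^{-\omega_p\tau}$ by Lemma \ref{liponrg}, the factor $\mu_\theta(\tau)$ is invertible for $\delta$ small, so one can solve for $\dot\theta(\tau)$, obtaining an identity $\dot\theta(\tau)=\mathcal{F}_\Phi(\theta)(\tau)$, where $\mathcal{F}_\Phi(\theta)(\tau)$ equals $\mu_\theta(\tau)^{-1}$ times $-\chi'(\tau)\Lambda_{\theta_\infty}(\uf)+\Lambda_{\theta_\infty}\big(\hat{\Lf}_{\theta(\tau)}\Phi(\tau)+\N_{\theta(\tau)}(\Phi(\tau))+\Vf(\Phi(\tau)+\Psi_{\theta(\tau)},\tau)\big)$. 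Conversely, integrating this identity from $0$ to $\tau$ and using $\chi(0)=1$ together with the constraint $\theta(0)=0$ (built into $X$, and forced by evaluating \eqref{modeq2} at $\tau=0$) shows that a solution $\theta\in C^1([0,\infty),\R)$ of $\dot\theta=\mathcal{F}_\Phi(\theta)$ with $\theta(0)=0$ is equivalent to a solution of \eqref{modeq2}. Thus it suffices to find a fixed point of $\K_\Phi\colon X_\delta\to C^1([0,\infty),\R)$, $(\K_\Phi\theta)(\tau):=\int_0^\tau\mathcal{F}_\Phi(\theta)(\sigma)\,d\sigma$.

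Then I would verify the hypotheses of the Banach fixed point theorem. For the self-mapping property, the $\hat{\Lf}$, $\N$ and $\Vf$ contributions to $\mathcal{F}_\Phi(\theta)$ are $\lesssim\delta^2 e^{-2\omega_p\tau}$ by Lemma \ref{manybounds}, while $|\chi'(\tau)\Lambda_{\theta_\infty}(\uf)|\lesssim\|\uf\|\lesssim\delta/c$ with $\chi'$ supported in $[1,4]$; since $|\mu_\theta(\tau)|\gtrsim 1$ this gives $e^{\omega_p\tau}|\mathcal{F}_\Phi(\theta)(\tau)|\lesssim\delta/c+\delta^2$ and, after integrating, $\sup_\tau\big|\int_0^\tau\mathcal{F}_\Phi(\theta)(\sigma)\,d\sigma\big|\lesssim\delta/c+\delta^2$, so $\|\K_\Phi\theta\|_X\lesssim\delta/c+\delta^2\le\delta$ once $c$ is large and $\delta$ small, i.e. $\K_\Phi(X_\delta)\subset X_\delta$. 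For the contraction property, $\Phi$ is held fixed, so the $\|\Phi_1-\Phi_2\|_{\X}$ terms in Lemma \ref{manylip} do not appear; combining Lemma \ref{manylip} (for the $\hat{\Lf}$, $\N$, $\Vf$ differences in $\theta$), the Lipschitz dependence of $\Lambda_{\theta_\infty}$ and $\mu_\theta$ on $\theta$ (Lemma \ref{liponrg} and $\theta\mapsto\theta_\infty$ Lipschitz), and the compact support of $\chi'$, one gets $\|\K_\Phi\theta_1-\K_\Phi\theta_2\|_X\lesssim(\delta+1/c)\|\theta_1-\theta_2\|_X$, a strict contraction for $\delta$ small and $c$ large. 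The Banach fixed point theorem then yields the unique $\theta\in X_\delta$ solving \eqref{modeq2}. Finally, Lipschitz dependence on $\Phi$ follows by the usual perturbation argument: with $\theta^{(i)}=\K_{\Phi_i}\theta^{(i)}$ and the bound $\|\K_{\Phi_1}\theta-\K_{\Phi_2}\theta\|_X\lesssim\|\Phi_1-\Phi_2\|_{\X}$ (now using precisely the $\|\Phi_1-\Phi_2\|_{\X}$ terms in Lemma \ref{manylip}), the contraction constant being $<1$ gives $\|\theta^{(1)}-\theta^{(2)}\|_X\lesssim\|\Phi_1-\Phi_2\|_{\X}$.

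The main obstacle I anticipate is the self-referential appearance of $\theta_\infty=\lim_{\tau\to\infty}\theta(\tau)$ inside $\Pf_{\theta_\infty}$ and $\Lambda_{\theta_\infty}$ — which makes $\K_\Phi$ non-causal and requires the Lipschitz control of $\theta\mapsto\theta_\infty$ — together with the need to absorb the term $\dot\theta(\tau)\Pf_{\theta_\infty}\hat{\rf}_{\theta(\tau)}$ produced by differentiating $\Pf_{\theta_\infty}\int_0^\tau\dot\theta\,\hat{\rf}_{\theta}\,d\sigma$; both are handled because $\hat{\rf}_{\theta(\tau)}=\rf_{\theta(\tau)}-\rf_{\theta_\infty}$ is of size $\delta e^{-\omega_p\tau}$ and depends Lipschitz-continuously on $\theta$. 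Everything else is bookkeeping built on Lemmas \ref{manybounds}, \ref{manylip} and \ref{liponrg}.
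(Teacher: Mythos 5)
Your proposal is correct and follows essentially the same route as the paper: both reduce Eq.~\eqref{modeq2} to a scalar fixed-point equation by exploiting that $\rg\,\Pf_{\theta_\infty}=\Span\{\rf_{\theta_\infty}\}$ (the paper via the inner product $\|\rf_{\theta_\infty}\|^{-2}(\cdot\,|\,\rf_{\theta_\infty})$, you via the equivalent functional $\Lambda_{\theta_\infty}$) and then run a contraction in $X_\delta$ using Lemmas \ref{manybounds}, \ref{manylip}, and \ref{liponrg}, with Lipschitz dependence on $\Phi$ obtained from the same perturbation argument. The only cosmetic difference is that you differentiate and absorb the term $\dot{\theta}(\tau)\Pf_{\theta_\infty}\hat{\rf}_{\theta(\tau)}$ by inverting the factor $\mu_\theta(\tau)$, whereas the paper leaves this small term inside the integral defining the fixed-point map and lets the contraction handle it.
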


\begin{proof}
The idea to prove this, is by setting up a contraction for $\theta$. To this end, we begin by rewriting Eq.~\eqref{modeq2} as
\begin{align*}
\theta(\tau)\rf_{\theta_\infty}=&-\int_0^{\tau}\chi'(\sigma) \Pf_{\theta_\infty}\uf\, d\sigma\\
&+\int_0^\tau \Pf_{\theta_\infty}\left(  \hat{\Lf}_{\theta(\sigma)}\Phi(\sigma) +\N_{\theta(\sigma)}(\Phi(\sigma))+\Vf(\Phi(\sigma)+\Psi_{\theta(\sigma)}, \sigma) \right)\, d\sigma
\\
&-\int_0^\tau \dot{\theta}(\sigma)\Pf_{\theta_\infty}\hat{\rf}_{\theta(\sigma)}\, d\sigma
\\
=&:\int_0^\tau \G(\theta,\Phi,\uf)(\sigma)\, d\sigma.
\end{align*}
This yields 
$$\theta(\tau)\|\rf_{\theta_\infty}\|^2=\left(\int_0^\tau \G(\theta,\Phi,\uf)(\sigma)\, d\sigma\Big|\rf_{\theta_\infty}\right).
$$
Therefore, by setting $$\tilde{G}(\theta,\Phi,\uf)(\sigma)= \|\rf_{\theta_\infty}\|^{-2}\left(\G(\theta,\Phi,\uf)(\sigma)\big|\rf_{\theta_\infty}\right),$$
we obtain
$$\theta(\tau)=\int_0^\tau\tilde{G}(\theta,\Phi,\uf)(\sigma)\,d\sigma =:G(\theta,\Phi,\uf)(\tau).
$$
Thanks to Lemma \ref{liponrg}, we know that 
$$
\|\hat{\rf}_{\theta(\tau)}\|\lesssim |\theta(\tau)-\theta_\infty|\lesssim \delta e^{-\omega_p\tau}.
$$
Further, it is also clear that
$$
\|\chi'(\tau) \Pf_{\theta_\infty}\uf\|\lesssim \|\chi'(\tau)\uf\|\lesssim \frac{\delta}{c}e^{-2\omega_p\tau}.
$$
Thus Lemma \ref{manybounds} implies $\| \G(\theta,\Phi,\uf)(\tau)\|\lesssim (\frac{\delta}{c}+\delta^2)e^{-2\omega_p \tau}$, from which we conclude that for $\theta\in X_\delta$ we have $$
G(\theta,\Phi,\uf)\in X_\delta$$ provided that $\delta >0$ and $ c>0$ are chosen sufficiently small and large, respectively.
Next, note that 
$$
\|\chi'(\tau) \Pf_{\theta_{1_\infty}}\uf-\chi'(\tau) \Pf_{\theta_{2_\infty}}\uf\| \lesssim \delta e^{-2\omega_p \tau}|\theta_{1_\infty}-\theta_{2_\infty}|\lesssim \delta e^{-2\omega_p \tau}\|\theta_1-\theta_2\|_X$$
as well as 
\begin{align*}
\|\dot{\theta}_1(\tau)\Pf_{\theta_{1_\infty}}\hat{\rf}_{\theta_1(\tau)}-\dot{\theta}_2(\tau)\Pf_{\theta_{2_\infty}}\hat{\rf}_{\theta_2(\tau)}\|\leq& \|\dot{\theta}_1(\tau)\Pf_{\theta_{1_\infty}}\hat{\rf}_{\theta_1(\tau)}-\dot{\theta}_1(\tau)\Pf_{\theta_{2_\infty}}\hat{\rf}_{\theta_2(\tau)}\|\\
+&\|\dot{\theta}_1(\tau)\Pf_{\theta_{2_\infty}}\hat{\rf}_{\theta_2(\tau)}-\dot{\theta}_2(\tau)\Pf_{\theta_{2_\infty}}\hat{\rf}_{\theta_2(\tau)}\|\\ \lesssim &\delta e^{-\omega_p \tau}\|\Pf_{\theta_{1_\infty}}\hat{\rf}_{\theta_1(\tau)}-\Pf_{\theta_{2_\infty}}\hat{\rf}_{\theta_2(\tau)}\|+\delta e^{-\omega_p\tau}|\dot{\theta}_1-\dot{\theta}_2|\\
\lesssim & \delta e^{-2\omega_p \tau}\|\theta_1-\theta_2\|_X,
\end{align*}
since $\|\hat{\rf}_{\theta_{1_\infty}}-\hat{\rf}_{\theta_{2_\infty}}\|\lesssim e^{-\omega_p \tau}\|\theta_1-\theta_2\|_X.$
These two estimates together with the estimates provided by Lemma \ref{manylip} now imply that
$$\|\G(\theta_1,\Phi,\uf)(\tau)-\G(\theta_2,\Phi,\uf)(\tau)\|\lesssim \delta e^{-2\omega_p \tau}\|\theta_1-\theta_2\|_X
$$
which in turn yields
\begin{equation}\label{notneededagain}
\|G(\theta_1,\Phi,\uf)-G(\theta_2,\Phi,\uf)\|_X\lesssim \delta\|\theta_1-\theta_2\|_X,\end{equation}
for all $\theta_1, \theta_2 \in X_\delta$. Therefore, the requirements of the contraction mapping principle are satisfied and we obtain the existence of a unique $\theta \in X_\delta$ with $\theta(\tau)=G(\theta,\Phi,\uf)(\tau)$. To prove the final claim, let $\theta_1(\tau)=G(\theta_1,\Phi_1,\uf)(\tau)$ and $\theta_2(\tau)= G(\theta_2,\Phi_2,\uf)(\tau),$
for $\Phi_1, \Phi_2 \in \mathcal{X}_\delta.$ We then estimate
\begin{align*}
|\dot{\theta}_1(\tau)-\dot{\theta}_2(\tau)|\lesssim& |\,G(\theta_1,\Phi_1,\uf)-G(\theta_2,\Phi_2,\uf)\,|\\
\lesssim& \delta e^{-2\omega_p \tau}\left(\|\Phi_1-\Phi_2\|_{\mathcal{X}}+\|\theta_1-\theta_2\|_X \right),
\end{align*}
due to the previous considerations in this proof and the estimates in Lemma \ref{manylip}. 
This now yields the claim by invoking the fundamental theorem of calculus, provided $\delta$ is chosen sufficiently small.
\end{proof}

\subsection{Time-translation instability}

Now we deal with the unstable subspace  $\rg\, \Qr_{\theta_\infty}.$ This will be done by adding a correction term to the evolution in order to stabilize it. To find such a term, we formally apply $ \Qr_{\theta_\infty}$ to Eq.~\eqref{modeq} which yields
\begin{align*}
\Qr_{\theta_\infty}\Phi(\tau)=&\,e^{\tau} \Qr_{\theta_\infty}\uf\\
&+e^{\tau}\Qr_{\theta_\infty}\int_0^\tau e^{-\sigma} \hat{\Lf}_{\theta(\sigma)}\Phi(\sigma) +\N_{\theta(\sigma)}(\Phi(\sigma))+\Vf(\Phi(\sigma)+\Psi_{\theta(\sigma)}, \sigma)\, d\sigma\\
&-e^{\tau}\Qr_{\theta_\infty}\int_0^{\tau}e^{-\sigma}\partial_\sigma \Psi_{\theta(\sigma)} \, d\sigma.
\end{align*}
Therefore, we set \begin{align*}
\Cf(\Phi,\theta,\uf):=\,& \Qr_{\theta_\infty} \uf\\
+&\Qr_{\theta_\infty}\int_0^\infty e^{-\sigma}\left( \hat{\Lf}_{\theta(\sigma)}\Phi(\sigma) +\N_{\theta(\sigma)}(\Phi(\sigma))+\Vf(\Phi(\sigma)+\Psi_{\theta(\sigma)}, \sigma)-\partial_\sigma \Phi_{\theta(\sigma)}\right) \, d\sigma
\end{align*}
and first deal with the modified equation given by 
\begin{align}\label{eqmodi}
\Phi(\tau)=&\Sf_{\theta_\infty}(\tau)\left(\uf-\Cf(\Phi,\theta,\uf)\right)\nonumber\\
&+
\int_0^{\tau}\Sf_{\theta_\infty}(\tau-\sigma)\left(\hat{\Lf}_{\theta(\sigma)}\Phi(\sigma) +\N_{\theta(\sigma)}(\Phi(\sigma))+\Vf(\Phi(\sigma)+\Psi_{\theta(\sigma)}, \sigma)-\partial_\sigma \Phi_{\theta(\sigma)}\right) \, d\sigma.
\end{align}
\begin{prop}
Let $\delta>0 $ be small enough and $c\geq 1$ be sufficiently large. For any $\uf \in \mathcal{H}$ with $\|\uf\|\leq \frac{\delta}{c}$ there exist unique functions $\Phi \in \mathcal{X}_\delta$ and $\theta \in X_\delta$ such that equation \eqref{eqmodi} holds for all $\tau \geq 0$.
\end{prop}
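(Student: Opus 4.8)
The plan is to solve \eqref{eqmodi} by a fixed point argument in $\mathcal{X}_\delta$. Given $\Phi \in \mathcal{X}_\delta$, Lemma \ref{solutlem} supplies a unique $\theta = \theta(\Phi) \in X_\delta$ solving the modulation equation \eqref{modeq2}, and this assignment is Lipschitz from $\mathcal{X}_\delta$ into $X$; write $\theta_\infty = \theta_\infty(\Phi) = \lim_{\tau\to\infty}\theta(\tau)$. I then define the operator
\begin{align*}
\K(\Phi)(\tau) := {}& \Sf_{\theta_\infty}(\tau)\big(\uf - \Cf(\Phi,\theta,\uf)\big) \\
& {}+ \int_0^\tau \Sf_{\theta_\infty}(\tau-\sigma)\Big(\hat{\Lf}_{\theta(\sigma)}\Phi(\sigma) + \N_{\theta(\sigma)}(\Phi(\sigma)) + \Vf(\Phi(\sigma)+\Psi_{\theta(\sigma)},\sigma) - \partial_\sigma \Psi_{\theta(\sigma)}\Big)\, d\sigma,
\end{align*}
so that a fixed point $\Phi$ of $\K$ in $\mathcal{X}_\delta$, paired with the associated $\theta(\Phi)$, is exactly a solution of \eqref{eqmodi}. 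It therefore suffices to show that $\K$ maps $\mathcal{X}_\delta$ into itself and is a contraction there.

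First I would check the self-mapping property by decomposing $\K(\Phi)(\tau)$ along $\I = \Pf_{\theta_\infty} + \Qf_{\theta_\infty} + \tilde{\Pf}_{\theta_\infty}$ from Proposition \ref{projections}. On $\rg\,\Pf_{\theta_\infty}$, where $\Sf_{\theta_\infty}(\tau)$ acts as the identity, the fact that $\theta$ solves \eqref{modeq2} (together with $\Pf_{\theta_\infty}\Cf = 0$) makes $\Pf_{\theta_\infty}\K(\Phi)(\tau)$ collapse to $\chi(\tau)\Pf_{\theta_\infty}\uf$, which is $O\!\big((\delta/c)\,e^{-\omega_p\tau}\big)$ since $\chi$ is compactly supported and $\|\uf\| \le \delta/c$. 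On $\rg\,\Qf_{\theta_\infty}$, where $\Sf_{\theta_\infty}(\tau)$ acts as $e^{\tau}$, the correction term $\Cf$ is designed precisely so that the $\sigma$-independent contributions cancel and $\Qf_{\theta_\infty}\K(\Phi)(\tau) = -\Qf_{\theta_\infty}\int_\tau^\infty e^{\tau-\sigma}(\cdots)\,d\sigma$; since the $\Qf_{\theta_\infty}$-projection of the integrand is $\lesssim \delta^2 e^{-2\omega_p\sigma}$ by Lemma \ref{manybounds}, this is $O(\delta^2 e^{-2\omega_p\tau})$. Finally, on $\rg\,\tilde{\Pf}_{\theta_\infty}$ I would use the decay $\|\Sf_{\theta_\infty}(\tau)\tilde{\Pf}_{\theta_\infty}\| \lesssim e^{-\frac{4}{3(p-1)}\tau}$ from Proposition \ref{projections} together with $\|\uf\| \le \delta/c$ and Lemma \ref{manybounds}; since $\omega_p \le \tfrac{1}{p-1} < \tfrac{4}{3(p-1)}$, the Duhamel convolution against the stable part reproduces a factor $e^{-\omega_p\tau}$. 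Adding the three pieces gives $\|\K(\Phi)\|_{\mathcal{X}} \lesssim \delta/c + \delta^2 \le \delta$ once $c$ is large and $\delta$ small.

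For the contraction estimate I would compare $\K(\Phi_1)$ and $\K(\Phi_2)$ using the same three explicit formulas, now relative to the angles $\theta_{1_\infty}$ and $\theta_{2_\infty}$. In each piece one splits a difference of $\theta$-dependent operators from a difference of nonlinear data: Lemma \ref{solutlem} gives $\|\theta(\Phi_1)-\theta(\Phi_2)\|_X \lesssim \|\Phi_1-\Phi_2\|_{\mathcal{X}}$ (hence the same for $|\theta_{1_\infty}-\theta_{2_\infty}|$); Lemma \ref{manylip} controls the differences of $\hat{\Lf}_{\theta}\Phi$, $\N_{\theta}(\Phi)$, $\Vf$ and $\partial_\tau\Psi_\theta$ by $\delta e^{-2\omega_p\tau}\big(\|\Phi_1-\Phi_2\|_{\mathcal{X}} + \|\theta_1-\theta_2\|_X\big)$; and Lemma \ref{liponrg} together with Lemma \ref{liplem1} controls $\Pf_{\theta_{1_\infty}} - \Pf_{\theta_{2_\infty}}$, $\Qf_{\theta_{1_\infty}} - \Qf_{\theta_{2_\infty}}$ and $\Sf_{\theta_{1_\infty}}(\tau)\tilde{\Pf}_{\theta_{1_\infty}} - \Sf_{\theta_{2_\infty}}(\tau)\tilde{\Pf}_{\theta_{2_\infty}}$ by $|\theta_{1_\infty}-\theta_{2_\infty}|$ times the appropriate exponential weight; these also yield a Lipschitz bound on $\Cf$. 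Every term carries a prefactor $\delta$, so one arrives at $\|\K(\Phi_1)-\K(\Phi_2)\|_{\mathcal{X}} \lesssim \delta\,\|\Phi_1-\Phi_2\|_{\mathcal{X}}$, and the contraction mapping principle provides the unique $\Phi \in \mathcal{X}_\delta$; uniqueness of $\theta \in X_\delta$ is the uniqueness clause of Lemma \ref{solutlem}.

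The main obstacle I anticipate is organizational: every object entering $\K$ — the generator $\Lf_{\theta_\infty}$, its semigroup $\Sf_{\theta_\infty}$, and all three spectral projections $\Pf_{\theta_\infty}, \Qf_{\theta_\infty}, \tilde{\Pf}_{\theta_\infty}$ — depends on the fixed-point variable through $\theta_\infty$, so neither the self-mapping nor the contraction estimate can be performed at a fixed reference angle, and the unstable ($e^\tau$) direction must always be neutralized via the correction $\Cf$ before any crude semigroup bound is applied. The Lipschitz-in-$\theta$ estimates of Lemmas \ref{liplem1}, \ref{liponrg} and \ref{manylip} are exactly what makes the bookkeeping close: one must consistently pair each ``difference of $\theta$-dependent operators'' factor (of size $\lesssim \|\Phi_1-\Phi_2\|_{\mathcal{X}}$) with a ``smallness'' factor of size $\lesssim \delta e^{-\omega_p\tau}$ coming either from $\|\Phi\|_{\mathcal{X}} \le \delta$ or from the quadratic structure of the nonlinearity, so that the net exponential rate is $e^{-2\omega_p\tau}$ and survives integration against each of the three semigroup components.
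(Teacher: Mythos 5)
Your proposal is correct and follows essentially the same route as the paper: define $\K$ as the right-hand side of \eqref{eqmodi} with $\theta=\theta(\Phi)$ supplied by Lemma \ref{solutlem}, decompose along $\Pf_{\theta_\infty}$, $\Qf_{\theta_\infty}$, $\tilde{\Pf}_{\theta_\infty}$ (the $\Pf$-piece collapsing to $\chi(\tau)\Pf_{\theta_\infty}\uf$ via the modulation equation, the $\Qf$-piece becoming the tail integral $-\int_\tau^\infty e^{\tau-\sigma}(\cdots)\,d\sigma$ thanks to $\Cf$, the stable piece handled by the semigroup decay), then close the self-mapping and contraction estimates with Lemmas \ref{manybounds}, \ref{manylip}, \ref{liponrg} and \ref{solutlem} and apply the contraction mapping principle. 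This matches the paper's argument in both structure and the lemmas invoked.
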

\begin{proof}
To begin with, we denote the right-hand side of Eq.~\eqref{eqmodi} by $\K(\Phi,\theta,\uf)(\tau).$ The idea of this proof is to again invoke the contraction mapping principle. Therefore, we first claim that for $\delta >0$ small enough and $\Phi \in \mathcal{X}_\delta$ we have $ \K(\Phi,\theta,\uf) \in \mathcal{X}_\delta$ where $\theta \in X_\delta$ is the one associated to $\Phi$ by Lemma \ref{solutlem}.
We first apply the projection $\Qr_{\theta_\infty}$ to the right-hand side of the equation which yields
\begin{align*}
\Qf_{\theta_\infty} \K(\Phi,\theta,\uf)(\tau)=&-\int_\tau^\infty
e^{\tau-\sigma} \Qf_{\theta_\infty}\bigg{(}\hat{\Lf}_{\theta(\sigma)}\Phi(\sigma) +\N_{\theta(\sigma)}(\Phi(\sigma)\\
&+\Vf(\Phi(\sigma)+\Psi_{\theta(\sigma)}, \sigma)-\partial_\sigma \Phi_{\theta(\sigma)}\bigg{)} \, d\sigma.
\end{align*}
Therefore, Lemma \ref{manybounds} implies
\begin{align*}
\|\Qf_{\theta_\infty} \K(\Phi,\theta,\uf)(\tau)\|\lesssim \delta^2 \int_{\tau}^\infty e^{\tau-\sigma}e^{-2\omega_p\tau}\lesssim \delta^2 e^{-2\omega_p \tau}.
\end{align*}
Thus $\Qf_{\theta_\infty} \K(\Phi,\theta,\uf) \in \mathcal{X}_{\frac{\delta}{4}}$ if $\delta$ is chosen small enough. Note that since $\Qr_{\theta_\infty}\Pro_{\theta_\infty}=0$ and $\Cf(\Phi,\theta,\uf)$ is contained in the range of $\Qf_{\theta_\infty}$, we obtain
\begin{align*}
\Pf_{\theta_\infty}\K(\Phi,\theta,\uf)(\tau)=\chi(\tau)\Pf_{\theta_\infty}\uf,
\end{align*}
due to Eq.~\eqref{modeq}.
Hence
\begin{align*}
\|\Pf_{\theta_\infty}\K(\Phi,\theta,\uf)(\tau)\|\lesssim \frac{\delta}{c}e^{-2\omega_p\tau}
\end{align*}
and, provided $c$ is chosen sufficiently large, this implies
$\Pf_{\theta_\infty}\K(\Phi,\theta,\uf) \in \mathcal{X}_{\frac{\delta}{4}}$.
To show that $\K(\Phi,\theta,\uf) \in \mathcal{X}_\delta$, it remains to consider $\left(\I-\Pro_{\theta_\infty}-\Qr_{\theta_\infty}\right)\K(\Phi,\theta,\uf)(\tau),$ which equals
\begin{align*}
&\Sf_{\theta_\infty}(\tau)\tilde{\Pro}_{\theta_\infty}\left(\uf+\Cf(\Phi,\theta,\uf)\right)\\
+\int_0^\tau&\Sf_{\theta_\infty}(\tau-\sigma) \tilde{\Pro}_{\theta_\infty} \left(\hat{\Lf}_{\theta(\sigma)}\Phi(\sigma) +\N_{\theta(\sigma)}(\Phi(\sigma))
+\Vf(\Phi(\sigma)+\Psi_{\theta(\sigma)}, \sigma)-\partial_\sigma \Phi_{\theta(\sigma)}\right) \, d\sigma
\end{align*}
with $\tilde{\Pro}_{\theta_\infty}$ defined as in Proposition \ref{projections}.
Thanks to Lemma \ref{manylip}, we have
$$
\|\Cf(\Phi,\theta,\uf)\|\lesssim \frac{\delta}{c}+\delta^2\int_0^\infty e^{-\sigma-2\omega_p\sigma} \, d\sigma\lesssim \frac{\delta}{c}+\delta^2
$$
and therefore, from Proposition \ref{projections} and Lemma \ref{manybounds}, we infer
\begin{align*}
\|\left(\I-\Pro_{\theta_\infty}-\Qr_{\theta_\infty} \right)\K(\Phi,\theta,\uf)(\tau)\|
&\lesssim (\frac{\delta}{c}+\delta^2)e^{-\omega_p\tau}+\delta^2\int_0^{\tau}e^{-\omega_p(\tau-\sigma)}e^{-2\omega_p\sigma}\, d\sigma \\ 
&\lesssim (\frac{\delta}{c}+\delta^2)e^{-\omega_p\tau}.
\end{align*}
In summary, $\K(\Phi,\theta,\uf)\in \mathcal{X}_\delta$ whenever $\Phi \in \mathcal{X}_\delta$.
It remains to show the Lipschitz-continuity of $\K(\Phi,\theta,\uf)$ for arbitrary $\Phi \in \mathcal{X}_\delta$.
Hence let $\Phi_1, \Phi_2 \in \mathcal{X}_\delta$ and let $\theta_i$ be associated to $\Phi_i$ through Lemma \ref{solutlem}. We proceed in a similar manner as before and therefore first deal with $\Qf_{\theta_\infty}\K(\Phi,\theta,\uf)$. This yields
\begin{align*}
\|&\Qf_{\theta_{1_\infty}}\K(\Phi_1,\theta_1,\uf)(\tau)-\Qf_{\theta_{2_\infty}}\K(\Phi_2,\theta_2,\uf)(\tau)\|
\\
\lesssim& \|\Qf_{\theta_{1_\infty}}\K(\Phi_1,\theta_1,\uf)(\tau)-\Qf_{\theta_{1_\infty}}\K(\Phi_2,\theta_2,\uf)(\tau\textbf{})\| \\
&+\|\Qf_{\theta_{1_\infty}}\K(\Phi_2,\theta_2,\uf)(\tau)-\Qf_{\theta_{2_\infty}}\K(\Phi_2,\theta_2,\uf)(\tau)\| \\
\lesssim & \delta \int_\tau^{\infty} e^{\tau-\sigma}e^{-2\omega_p\sigma}\|\Phi_1-\Phi_2\|_{\mathcal{X}} d \sigma
\\
\lesssim & \delta e^{-2\omega_p\tau}\|\Phi_1-\Phi_2\|_{\mathcal{X}},
\end{align*}
by using the estimates given in Lemma \ref{manylip} as well as the estimate $$\|\theta_1-\theta_2\|_X\lesssim \|\Phi_1-\Phi_2\|_{\mathcal{X}}$$ which was derived in Lemma \ref{solutlem}.
We further obtain
\begin{align*}
\|\Pf_{\theta_{1_\infty}}\K(\Phi_1,\theta_1,\uf)(\tau)-\Pf_{\theta_{2_\infty}}\K(\Phi_2,\theta_2,\uf)(\tau)\|&\lesssim |\chi(\tau)|\,\|(\Pf_{\theta_{1_\infty}}-\Pf_{\theta_{2_\infty}})\uf\|\\
&\lesssim \delta e^{-2\omega_p\tau}\|\theta_1-\theta_2\|_X\\
&\lesssim \delta e^{-2\omega_p \tau} \|\Phi_1-\Phi_2\|_\X,
\end{align*}
due to the Lemmas \ref{liponrg} and \ref{solutlem}.
By invoking Lemmas \ref{liponrg} and \ref{manylip}, one obtains \begin{align*}
\|\Cf(\Phi_1,\theta_1,\uf)-\Cf(\Phi_2,\theta_2,\uf)\|&\lesssim \delta\|\Phi_1-\Phi_2\|_\X+\delta \int_0^{\infty} e^{-\sigma-2\omega_p\sigma}\|\Phi_1-\Phi_2\|_{\mathcal{X}}\\
&\lesssim\delta\|\Phi_1-\Phi_2\|_{\mathcal{X}}.
\end{align*}
Hence,
\begin{align*}
\|\tilde{\Pro}_{\theta_{1_\infty}}\K(\Phi_1,\theta_1,\uf)(\tau)-\tilde{\Pro}_{\theta_{2_\infty}}\K(\Phi_2,\theta_2,\uf)(\tau)\|&\lesssim \delta e^{-\omega_p \tau}\|\Phi_1-\Phi_2\|_{\mathcal{X}}
\\
&+\delta\int_0^\tau e^{-\omega_p(\tau+\sigma)}\|\Phi_1-\Phi_2\|_{\mathcal{X}}\\
&\lesssim \delta e^{-\omega_p\tau}\|\Phi_1-\Phi_2\|_{\mathcal{X}}
\end{align*}
again with the help of the Lemmas \ref{liponrg} and \ref{manylip}.
The claim is now established, since the conditions of the aforementioned contraction mapping principle have been established.
\end{proof}
\subsection{Variation of blowup time}
We are now going to develop tools that allow us to solve Eq.~\eqref{eqmodi} without the correction term $\Cf(\Phi,\theta,\uf)$. In order to do so, we first introduce the scaling operator $\vf \mapsto \vf^T$ with
\begin{align*}
\vf^T(\rho):=\begin{pmatrix}
&(T-T_0)^{\frac{2}{p-1}}v_1((T-T_0) \rho)\\&(T-T_0) ^{\frac{p+1}{p-1}}v_2((T-T_0) \rho)\\
&(T-T_0) ^{\frac{2}{p-1}}v_3((T-T_0) \rho)\\&(T-T_0) ^{\frac{p+1}{p-1}}v_4((T-T_0) \rho)
\end{pmatrix}
\end{align*}
for any $\vf \in \mathcal{H}(\B^3_R):= \{\vf \in(H^2\times H^1(\B^3_R))^2:\vf \text{ radial} \}$.
We do this since, due to transformations we applied to Eq.~\eqref{startingeq}, the blowup time is now also showing up in the initial data of Eq.~\eqref{eq2}. By the splitting \ref{splitting}, the initial data can the be rewritten as
\begin{align*}
\Phi(0)=\Psi(0)-\Psi_{\theta(0)}(0)=
\Jf(\tilde{f},\tilde{g})^T+\Psi_0^T-\Psi_0
\end{align*}
where $\Jf:H^2\times H^1(\B^3_R)\rightarrow \mathcal{H}(\B^3_R)$ with

$$\Jf(\tilde{f},\tilde{g})(\rho):=\begin{pmatrix}
&\Re\left(\tilde{f}(\rho)\right)\\
&\Re\left(\tilde{g}(\rho)\right)\\
&\Im\left(\tilde{f}(\rho)\right)\\
&\Im\left(\tilde{g}(\rho)\right)
\end{pmatrix}
$$
Note that $\Jf$ is a bounded linear operator. Motivated by this, we set
\begin{align*}
\Uf(T,\vf):=\Jf(\vf)^T+\Psi_0^T-\Psi_0
\end{align*}
for any $\vf \in H^2\times H^1(\B^3_R).$ Further, by setting $ \vf:=(\tilde{f},\tilde{g})$ we can rewrite our initial data as
$$\Phi(0)=\Uf(T,\vf).
$$
Next, we need the following result on $\Uf$.
\begin{lem}\label{bigUlem}
Let $\delta >0$ be small enough, $ c\geq 1$ sufficiently large, $T_0 \in [1-\frac{3\delta}{c},1-\frac{2\delta}{c}]$ and $T \in[1-\frac{\delta}{c^2},1+\frac{\delta}{c^2}]$. Furthermore, suppose $\|\vf\|_{ H^2\times H^1(\B_R^3)}$ is sufficiently small,  where $R=\frac{3\delta}{c}+\frac{\delta}{c^2}.$ Then

$$\|\Uf(T,\vf)\|\leq \delta,$$
for all $ T \in[1-\frac{\delta}{c^2},1+\frac{\delta}{c^2}]$ and the map 
$T \mapsto \Uf(T,\vf):[1-\frac{\delta}{c^2},1+\frac{\delta}{c^2}] \rightarrow \mathcal{H}$ is continuous.
\end{lem}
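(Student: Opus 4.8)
The plan is to use the decomposition $\Uf(T,\vf)=\Jf(\vf)^T+\bigl(\Psi_0^T-\Psi_0\bigr)$ and to bound the two summands separately: the first is small because $\vf$ is small, while the second is small because $T$ has been chosen in a window of size $\tfrac{\delta}{c^2}$, which is negligible against $1-T_0\sim\tfrac{\delta}{c}$. I would first record the elementary consequences of the parameter ranges. From $T_0\le 1-\tfrac{2\delta}{c}$ and $T\ge 1-\tfrac{\delta}{c^2}$ one gets $T-T_0\ge\tfrac{\delta}{c}\bigl(2-\tfrac1c\bigr)\ge\tfrac{\delta}{c}>0$, while $T\le 1+\tfrac{\delta}{c^2}$ and $T_0\ge 1-\tfrac{3\delta}{c}$ give $T-T_0\le\tfrac{3\delta}{c}+\tfrac{\delta}{c^2}=R$; moreover $\zeta:=\tfrac{T-T_0}{1-T_0}$ satisfies $|\zeta-1|=\tfrac{|T-1|}{1-T_0}\le\tfrac{\delta/c^2}{2\delta/c}=\tfrac1{2c}$. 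In particular $\rho\mapsto(T-T_0)\rho$ maps $\B^3_1$ into $\B^3_R$, so the scaling operator $\mathbf{w}\mapsto\mathbf{w}^T$ is a well-defined bounded map $\mathcal{H}(\B^3_R)\to\mathcal{H}$.

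Next I would prove the scaling bound $\|\mathbf{w}^T\|\lesssim(T-T_0)^{\frac{2}{p-1}-\frac32}\,\|\mathbf{w}\|_{\mathcal{H}(\B^3_R)}$ for every $\mathbf{w}\in\mathcal{H}(\B^3_R)$, by substituting $r=(T-T_0)\rho$ in the weighted radial integrals defining $\|\cdot\|_{\mathcal{H}}$: each of the four components carries a prefactor $(T-T_0)^{2/(p-1)}$ or $(T-T_0)^{(p+1)/(p-1)}$ and acquires an extra power $(T-T_0)^k$ from its $k$-th $\rho$-derivative, so after the substitution the smallest power of $T-T_0$ that appears is $\tfrac{2}{p-1}-\tfrac32$, coming from the undifferentiated $L^2$-part of the two $H^2$-components. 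Since $p>3$ forces $\tfrac{2}{p-1}-\tfrac32<-\tfrac12<0$, this power of $T-T_0$ is large; but it is bounded by $(c/\delta)^{\frac32-\frac{2}{p-1}}$, a constant depending only on $\delta$ and $c$, which is exactly why the smallness demanded of $\vf$ must be allowed to depend on $\delta$ and $c$ (this amplification of the supercritical $L^2$-part of the $H^2\times H^1$-norm under the zoom-in is the one genuinely structural feature of the lemma). Applying the bound with $\mathbf{w}=\Jf(\vf)$ and using boundedness of $\Jf$ gives $\|\Jf(\vf)^T\|\le\tfrac\delta2$ once $\|\vf\|_{H^2\times H^1(\B^3_R)}$ is small enough. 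For the remaining term I would use that $\Psi_0^T-\Psi_0$ is constant in $\rho$ and that each of its entries is a smooth function of $\zeta$ vanishing at $\zeta=1$ (i.e.\ at $T=1$); since $|\zeta-1|\le\tfrac1{2c}\le\tfrac12$ this yields $\|\Psi_0^T-\Psi_0\|\lesssim|\zeta-1|\lesssim\tfrac1c\le\tfrac\delta2$ provided $c$ is large in terms of $\delta$. Adding the two bounds gives $\|\Uf(T,\vf)\|\le\delta$.

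It remains to prove continuity of $T\mapsto\Uf(T,\vf)$ on the interval $[1-\tfrac{\delta}{c^2},1+\tfrac{\delta}{c^2}]$, on which $\tfrac{\delta}{c}\le T-T_0\le R$, so that $T-T_0$ stays in a compact subset of $(0,\infty)$. Since $\Psi_0$ is constant and the entries of $\Psi_0^T$ depend continuously on $T$ through $\zeta>0$, only $T\mapsto\Jf(\vf)^T$ requires attention, and for this it suffices that each scalar map $T\mapsto(T-T_0)^{\beta}w_j((T-T_0)\,\cdot\,)$ be continuous into the relevant Sobolev space. The prefactor is continuous and nonvanishing there, and the dilation $w\mapsto w((T-T_0)\,\cdot\,)$ is strongly continuous on $L^2$ — approximate $w,w',w''\in L^2(\B^3_R)$ by continuous functions and combine uniform continuity with the change-of-variables bound from the previous step — so $T\mapsto w_j((T-T_0)\,\cdot\,)$ is continuous into $H^2(\B^3_1)$ for the two $H^2$-slots and into $H^1(\B^3_1)$ for the two $H^1$-slots; summing the four components finishes it. I expect the scaling estimate of the second paragraph, together with the bookkeeping of powers, to be the main (though still mild) obstacle, while the bound $|\zeta-1|\le\tfrac1{2c}$ — the rigorous form of the heuristic that moving the blowup time by $\tfrac{\delta}{c^2}$ is harmless compared with the window $1-T_0\sim\tfrac{\delta}{c}$ — is the point that should be stated most carefully.
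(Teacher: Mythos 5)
Your argument is correct and follows essentially the same route as the paper: split $\Uf(T,\vf)=\Jf(\vf)^T+(\Psi_0^T-\Psi_0)$, bound the second term by a power of $c^{-1}$ using $|T-1|\le \delta/c^2$ against $1-T_0\ge 2\delta/c$, and absorb the scaling amplification $(T-T_0)^{\frac{2}{p-1}-\frac{3}{2}}\lesssim (c/\delta)^{\frac{3}{2}-\frac{2}{p-1}}$ into the smallness requirement on $\vf$. Your exponents are in fact slightly sharper than the paper's (which settles for $(c/\delta)^{3-\frac{2}{p-1}}$ and $c^{-\frac{2}{p-1}}$), and you additionally spell out the continuity in $T$ via strong continuity of dilations, which the paper's proof leaves implicit.
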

\begin{proof}
First note that \begin{align*}
\left|[\Psi_0^T]_1-[\Psi_0]_1\right|&=\kappa_p\left|
\frac{(T-T_0)^{\frac{2}{p-1}}}{(1-T_0)^{\frac{2}{p-1}}} -1\right|\\&\lesssim 
\left|\frac{T-T_0-1+T_0}{1-T_0}\right|^{\frac{2}{p-1}}
\\
&\leq
\left|\frac{1-T}{1-T_0}\right|^{\frac{2}{p-1}}\\
&\lesssim
\left|c\right|^{-\frac{2}{p-1}}.
\end{align*}
Further, the same estimate holds true for the other components as well.
To continue let $\vf \in \mathcal{H}(\B_R^3)$. Then
\begin{align*}
\|\vf^T\|\lesssim&  (T-T_0)^{\frac{2}{p-1}-3}\|\vf\|_{H^2(\B_{T-T_0}^3)}\\
\lesssim& \left(\frac{c}{\delta}\right)^{3-\frac{2}{p-1}}\|\vf\|_{H^2(\B_R^3)}.
\end{align*} 
Therefore, as $\|\Jf\|=1$, we obtain $\|\Uf(T,\vf)\|\leq\delta$, provided $\vf$ satisfies 
$$\|\vf\|_{H^2\times H^1(\B_R^3)}\leq \delta^{3-\frac{2}{p-1}}c^{-3},$$
and $c$ is chosen large enough.

\end{proof}
This result immediately implies the following Lemma.
\begin{lem}\label{solutionU}
Assume that the assumptions of Lemma \ref{bigUlem} hold. Then the equation 
\begin{align}\label{eqbigU}
\Phi(\tau)= & \,\Sf_{\theta_\infty}(\tau)\left(\Uf(T,\vf)-\Cf(\Phi,\theta,\Uf(T,\vf))\right)\\
&+
\int_0^{\tau}\Sf_{\theta_\infty}(\tau-\sigma)\left(\hat{\Lf}_{\theta(\sigma)}\Phi(\sigma) +\N_{\theta(\sigma)}(\Phi(\sigma))+\Vf(\Phi(\sigma)+\Psi_{\theta(\sigma)}, \sigma)-\partial_\sigma \Phi_{\theta(\sigma)}\right) \, d\sigma\nonumber
\end{align}
has a solution $(\Phi,\theta) \in \mathcal{X}_\delta \times X_\delta$ and the solution map $T \mapsto (\Phi,\theta):[1-\frac{\delta}{c^2},1+\frac{\delta}{c^2}]\rightarrow\mathcal{X}\times X$ is continuous.
\end{lem}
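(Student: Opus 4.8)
The plan is to read off the existence statement directly from the proposition of the previous subsection and to obtain continuity of the solution map by a continuous-dependence argument for the fixed point of a $T$-parametrized contraction. For the first part, fix $T\in[1-\tfrac{\delta}{c^2},1+\tfrac{\delta}{c^2}]$ and put $\uf:=\Uf(T,\vf)$. By Lemma~\ref{bigUlem} we have $\|\uf\|\leq\delta$, and — after shrinking the admissible bound on $\|\vf\|_{H^2\times H^1(\B^3_R)}$ and enlarging $c$ if necessary — $\uf$ meets the smallness hypothesis of the preceding proposition; since Eq.~\eqref{eqbigU} is exactly Eq.~\eqref{eqmodi} written with the abstract datum replaced by $\Uf(T,\vf)$, that proposition produces the desired unique pair $(\Phi,\theta)\in\X_\delta\times X_\delta$.

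For the continuity of $T\mapsto(\Phi,\theta)$, I would first note that, for each $T$, the solution is the unique fixed point in $\X_\delta\times X_\delta$ of
\[
\mathcal{T}_T(\Phi,\theta):=\big(\,\K(\Phi,\theta,\Uf(T,\vf))\,,\;G(\theta,\Phi,\Uf(T,\vf))\,\big),
\]
where $\K$ is the right-hand side of Eq.~\eqref{eqmodi} and $G$ is the modulation functional of Lemma~\ref{solutlem}, so that a joint fixed point of $\mathcal{T}_T$ encodes precisely Eq.~\eqref{eqbigU} together with the modulation equation. The Lipschitz estimates established in Lemma~\ref{solutlem} and in the proof of the preceding proposition show that, with respect to $\|\cdot\|_{\X}+\|\cdot\|_X$, the map $\mathcal{T}_T$ is a contraction with a constant $\lesssim\delta<1$ that — crucially — is independent of $T$. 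Subtracting the fixed-point identities at two times $T,T'$ and absorbing the contraction term gives
\[
\|\Phi_T-\Phi_{T'}\|_{\X}+\|\theta_T-\theta_{T'}\|_X\;\lesssim\;\big\|\mathcal{T}_T(\Phi_{T'},\theta_{T'})-\mathcal{T}_{T'}(\Phi_{T'},\theta_{T'})\big\|,
\]
so it suffices to show that $T\mapsto\mathcal{T}_T(\Phi,\theta)$ is continuous into $\X\times X$ for each \emph{fixed} $(\Phi,\theta)\in\X_\delta\times X_\delta$.

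To finish, observe that $T$ enters $\mathcal{T}_T(\Phi,\theta)$ through only two channels. The first is the initial datum $\Uf(T,\vf)$, which is continuous in $T$ by Lemma~\ref{bigUlem}; since $\Sf_{\theta_\infty}$ is uniformly bounded and $\Cf$ depends on $T$ solely through $\Uf(T,\vf)$ and the $\Vf$-term, this channel is harmless. The second is the explicit occurrence of $T$ inside $\Vf(\,\cdot\,,\sigma)$ — in the prefactors built from powers of $T-T_0$ and in the arguments $T-(T-T_0)e^{-\sigma}$, $(T-T_0)\rho e^{-\sigma}$ and $(T-T_0)^{-(\cdots)}e^{(\cdots)\sigma}(\,\cdot\,)$ of $F$. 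For fixed $\sigma$ these vary continuously with $T$, so continuity of $F$ together with the structural bounds \eqref{eq:estBC}, \eqref{estimateV1} and \eqref{eq:estderF} yields $\|\Vf(\uf,\sigma;T)-\Vf(\uf,\sigma;T')\|\to0$ as $T'\to T$, while Lemma~\ref{lemonV} simultaneously supplies the $T$-uniform, $\sigma$-integrable majorant $\|\Vf(\uf,\sigma;T)\|\lesssim e^{-\tilde{q}\sigma}(1+\|\uf\|^{q}+\|\uf\|^2)$. Dominated convergence then lets the limit $T'\to T$ pass inside the Duhamel integrals defining $\K$ and inside the analogous integral defining $G$, which gives the claim. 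I expect this last point to be the main obstacle: one must quantify the $T$-dependence of $\Vf$ carefully enough — pointwise in the Duhamel variable and with a majorant integrable against the semigroup decay — to legitimately interchange the limit with the time integration; everything else follows softly from the $T$-uniform contraction structure already provided by Lemmas~\ref{manybounds} and \ref{manylip}.
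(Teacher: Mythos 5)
Your proposal is correct and follows essentially the same route as the paper: existence is obtained by substituting $\uf=\Uf(T,\vf)$ into Eq.~\eqref{eqmodi} and invoking the preceding proposition, and continuity of $T\mapsto(\Phi,\theta)$ comes from the $T$-uniform contraction combined with continuity of the data in $T$. The paper's own proof is a one-line appeal to Lemma \ref{bigUlem}; your version additionally makes explicit the $T$-dependence hidden inside $\Vf$ and the dominated-convergence step needed to pass it through the Duhamel integrals, details the paper leaves implicit (and which, strictly speaking, also use continuity of $F$ in its first argument, not literally stated in Assumption \ref{asm:F} but clearly intended).
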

\begin{proof}
This follows immediately from Lemma \ref{bigUlem}, since one obtains Eq.~\eqref{eqbigU} by replacing $\uf$ with $\Uf(T,\vf)$ in Eq.~\eqref{eqmodi}.
\end{proof}
The final Lemma, needed to prove the main result, states that we can choose a $T$ such that the correction term $\Cf(\Phi,\theta,\Uf(T,\vf))$ vanishes.

\begin{lem}\label{Cvanish}
Let the assumptions be as in Lemma \ref{bigUlem}. Then, there exist functions $(\Phi,\theta)\in \mathcal{X}_\delta \times X_\delta$ and a $T \in [1-\frac{\delta}{c^2},1+\frac{\delta}{c^2}]$ such that Eq.~\eqref{eqbigU} holds with $\Cf(\Phi,\theta,\Uf(T,\vf))=\textup{\textbf{0}}$.
\end{lem}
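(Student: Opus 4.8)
The plan is a shooting argument with the blowup time $T$ as shooting parameter. By Lemma~\ref{solutionU}, for every $T\in I:=[1-\tfrac{\delta}{c^2},1+\tfrac{\delta}{c^2}]$ there is a solution $(\Phi_T,\theta_T)\in\X_\delta\times X_\delta$ of Eq.~\eqref{eqbigU}, and the map $T\mapsto(\Phi_T,\theta_T)$ is continuous into $\X\times X$. Set $\zeta(T):=\Cf(\Phi_T,\theta_T,\Uf(T,\vf))$; then $T\mapsto\zeta(T)$ is continuous. Since $\zeta(T)\in\rg\,\Qf_{\theta_{T,\infty}}=\Span\{\gf_{\theta_{T,\infty}}\}$ and, by Lemma~\ref{liponrg} together with the smoothness of $\theta\mapsto\gf_\theta$, the vectors $\gf_{\theta_{T,\infty}}$ depend continuously on $T$ and have norm uniformly bounded below (they are nonzero constants in $\mathcal H$), we may write $\zeta(T)=a(T)\,\gf_{\theta_{T,\infty}}$ with $a:I\to\R$ continuous, e.g.\ $a(T)=\|\gf_{\theta_{T,\infty}}\|^{-2}(\zeta(T)|\gf_{\theta_{T,\infty}})$. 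It therefore suffices to produce $T\in I$ with $a(T)=0$, and by the intermediate value theorem it is enough to show that $a$ takes opposite signs at $T=1-\tfrac{\delta}{c^2}$ and $T=1+\tfrac{\delta}{c^2}$.

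To determine the sign I would extract the leading $T$-dependence of $\zeta(T)$. Writing $\Uf(T,\vf)=\Jf(\vf)^T+\Psi_0^T-\Psi_0$ in the definition of $\Cf$ decomposes $\zeta(T)$ into (i) $\Qf_{\theta_{T,\infty}}(\Psi_0^T-\Psi_0)$, (ii) $\Qf_{\theta_{T,\infty}}\Jf(\vf)^T$, and (iii) the term $\Qf_{\theta_{T,\infty}}\int_0^\infty e^{-\sigma}\big(\hat{\Lf}_{\theta_T(\sigma)}\Phi_T(\sigma)+\N_{\theta_T(\sigma)}(\Phi_T(\sigma))+\Vf(\Phi_T(\sigma)+\Psi_{\theta_T(\sigma)},\sigma)-\partial_\sigma\Psi_{\theta_T(\sigma)}\big)\,d\sigma$. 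A direct computation with $s:=\tfrac{T-T_0}{1-T_0}$ gives
\[ \Psi_0^T-\Psi_0=\Big(\kappa_p\big(s^{\frac{2}{p-1}}-1\big),\ \tfrac{2\kappa_p}{p-1}\big(s^{\frac{p+1}{p-1}}-1\big),\ 0,\ 0\Big)=\tfrac{2\kappa_p}{p-1}(s-1)\,\gf_0+O\big((s-1)^2\big), \]
so, using $\|\Qf_{\theta_{T,\infty}}-\Qf_0\|\lesssim|\theta_{T,\infty}|\le\delta$ (Lemma~\ref{liponrg}) and $\Qf_0\gf_0=\gf_0$, term (i) equals $\tfrac{2\kappa_p}{p-1}(s-1)\gf_0$ up to an error of size $O(\delta|s-1|)+O((s-1)^2)$. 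By Lemma~\ref{manybounds}, applying $\Qf_{\theta_{T,\infty}}$ to each summand of the integrand in (iii) produces a quantity $\lesssim\delta^2e^{-2\omega_p\sigma}$, so (iii) is $O(\delta^2)$; and (ii) satisfies $\|\Qf_{\theta_{T,\infty}}\Jf(\vf)^T\|\lesssim\|\vf^T\|\lesssim(c/\delta)^{3-\frac{2}{p-1}}\|\vf\|_{H^2\times H^1(\B^3_R)}$, which can be made arbitrarily small by shrinking $\|\vf\|$. On $I$ one has $s-1=\tfrac{T-1}{1-T_0}$ with $1-T_0\in[\tfrac{2\delta}{c},\tfrac{3\delta}{c}]$, so at the endpoints $T=1\mp\tfrac{\delta}{c^2}$ the quantity $\tfrac{2\kappa_p}{p-1}(s-1)$ has modulus comparable to $\tfrac1c$ and sign equal to that of $T-1$. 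Choosing the constants in the hierarchy ``$\delta$ small, then $c$ large, then $\|\vf\|$ tiny'' makes every other contribution to $a(T)$ negligible against this term at the endpoints. Hence $a(1-\tfrac{\delta}{c^2})$ and $a(1+\tfrac{\delta}{c^2})$ have opposite signs, the intermediate value theorem yields $T^*\in I$ with $a(T^*)=0$, i.e.\ $\Cf(\Phi_{T^*},\theta_{T^*},\Uf(T^*,\vf))=\textup{\textbf{0}}$, and taking $\Phi:=\Phi_{T^*}$, $\theta:=\theta_{T^*}$, $T:=T^*$ completes the proof.

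The step I expect to be the main obstacle is precisely the quantitative domination in the last paragraph: one must verify that the time-translation term $\tfrac{2\kappa_p}{p-1}(s-1)\gf_0$, of size $\sim\tfrac1c$ at the endpoints of $I$, really dominates the Duhamel error $O(\delta^2)$, the scaled-data error $O(\|\vf^T\|)$, and the tilt error $O(\delta|s-1|)$ coming from $\theta_{T,\infty}\neq0$. The $O(\delta|s-1|)$ and $O((s-1)^2)$ errors are automatically lower order (they carry an extra factor $\delta$, respectively $|s-1|\lesssim\tfrac1c$), and the $\|\vf^T\|$ error is removed by choosing $\|\vf\|$ (hence $\varepsilon$ in Theorem~\ref{maintheorem}) small last; the delicate point is the interplay between $\delta$, $c$, and the exponent $\tilde q$ from Lemma~\ref{lemonV}, which controls how large $c$ must be taken and hence how small $\tfrac1c$ is relative to the $\delta^2$ produced by Lemma~\ref{manybounds}. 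All remaining ingredients — the continuity of $T\mapsto(\Phi_T,\theta_T)$ from Lemma~\ref{solutionU}, the one-dimensionality of $\rg\,\Qf_{\theta_{T,\infty}}$, and the explicit expansion of $\Psi_0^T-\Psi_0$ — are routine.
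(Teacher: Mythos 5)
Your proposal is essentially the paper's own proof: the same decomposition of $\Uf(T,\vf)$ with leading term $(T-1)C_p(T-T_0)^{-1}\gf_0$ (the paper expands $\partial_T\Psi_0^T$ at $T=1$ rather than writing $s=\tfrac{T-T_0}{1-T_0}$, which is the same computation), the same use of Lemma \ref{solutionU} for continuity in $T$, of Lemma \ref{manybounds} for the $O(\delta^2)$ Duhamel error and of Lemma \ref{liponrg} for the tilt error, and the same reduction to a scalar condition by pairing $\Cf$ with $\gf_{\theta_\infty}$. The only difference is cosmetic: the paper rewrites the scalar condition as $1-T=(T-T_0)\left(O(\tfrac{\delta}{c})+O(c^{-2})\right)$ and invokes a fixed point of a continuous self-map of $[1-\tfrac{\delta}{c^2},1+\tfrac{\delta}{c^2}]$ instead of your sign change at the endpoints (the same intermediate-value argument), and the quantitative domination you rightly flag amounts to the constraint $c\delta^2\lesssim 1$ in addition to ``$c$ large'', a bookkeeping point on which the paper is exactly as terse as you are.
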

\begin{proof}
Denote by $(\Phi,\theta)\in\X_\delta\times X_\delta$, the functions associated to $T$ through Lemma \ref{solutionU}.
Further note that
$$\partial_T \Psi_0^T|_{T=1}=C_p(T-T_0)^{-1} \gf_0(\rho),$$
for some constant $C_p>0$.
Therefore we can rewrite $\Uf(T,\vf)$ as
\begin{align*}
\Uf(T,\vf)=&J(\vf)^T+ (T-1)C_p (T-T_0)^{-1} \gf_0+(T-1)^2(T-T_0)^{2}\ff(T)\\
=&J(\vf)^T+ (T-1)C_p (T-T_0)^{-1}\gf_{\theta_\infty}+(T-1)C_p (T-T_0)^{-1} (\gf_{\theta(0)}-\gf_{\theta_\infty})\\
&+(T-1)^2(T-T_0)^{2}\ff(T)
\end{align*}
with $\|\ff\|\lesssim 1.$
As $|\theta(0)-\theta_\infty|\lesssim \delta$, we have that 
$$
\|\gf_{\theta(0)}-\gf_{\theta_\infty}\|\lesssim \delta
$$
and from this we infer that 
\begin{align*}
\left(\Pf_{\theta_{\infty}} \Uf(T,\vf)|\gf_{\theta_\infty}\right)= (T-1)C_p (T-T_0)^{-1} \|\gf_{\theta_\infty}\|^2+O(\frac{\delta}{c})+O(c^{-2}),
\end{align*}
where each of the $O$-terms is a continuous function of $T$.
Hence Lemma \ref{solutionU} implies that 
\begin{align*}
\left(\Cf(\Phi,\theta,\Uf(T,\vf))|\gf_{\theta_\infty}\right)= (T-1)C_p (T-T_0)^{-1} \|\gf_{\theta_\infty}\|^2+O(\frac{\delta}{c})+O(c^{-2}).
\end{align*}
Consequently the vanishing of $\Cf(\Phi,\theta,\Uf(T,\vf))$ is equivalent to $T$ solving the equation 
\begin{align*}
1-T=(T-T_0)\left(O(\frac{\delta}{c})+O(c^{-2})\right).
\end{align*}
Note that the right hand side, denoted by $F(T) $, is continuous in $T$ and satisfies
$$
|F|\leq \frac{\delta}{c^2}.
$$
It follows that $1+F$ is a continuous map from $[1-\frac{\delta}{c^2},1+\frac{\delta}{c^2}]$ to $[1-\frac{\delta}{c^2},1+\frac{\delta}{c^2}]$
and such a map necessarily has a fixed point.
\end{proof}
Now we are able to prove our main result.
\subsection{Proof of Theorem \ref{maintheorem}}
\begin{proof}
Theorem \ref{maintheorem} is now essentially a consequence of the last few Lemmas. Therefore let $\delta,T_0,$ and $c$, be as in the assumptions of Lemma \ref{Cvanish}
and suppose that the initial data $(f,g)$ of Eq.~\eqref{startingeq} satisfy the conditions of Theorem \ref{maintheorem}.
Then, by the previous Lemma, there exists a $T \in [1-\frac{\delta}{c^2},1+\frac{\delta}{c^2}]$ as well as functions $(\Phi,\theta) \in \mathcal{X}_{\delta} \times X_\delta$ which solve Eq.~\eqref{duhameleq} with initial data $\Phi(0)= \Uf(T,\vf)$,
where $\vf:=(\tilde{f},\tilde{g})$ is as in \ref{splitting}.
Hence, $\Psi(\tau):=\Phi(\tau)+\Psi_{\theta(\tau)}$ satisfies Eq.~\eqref{abstraceq} in the mild sense, with initial data $\Psi(0)= \Psi_0+ U(T,\vf)$.
By undoing the transformations done in the first section, we obtain that
\begin{align*}
&u(t,r)=(T-t)^{-\frac{2}{p-1}}(\psi_1+i\psi_3)(-\log(T-t)+\log (T-T_0),\frac{r}{T-t})
\end{align*}
solves the original perturbed radial wave equation (\ref{startingeq}) with initial data
\begin{align*}
u(T_0,r)=&u^1_0(T_0,r)+\tilde{f}(r)\\
\partial_0 u(T_0,r)=&\partial_0 u^1_0(T_0,r)+\tilde{g}(r).
\end{align*}
We thus calculate
\begin{align*}
(T-t)^{\frac{2}{p-1}+\frac{1}{2}}&\|u_{\theta_\infty}(t)-u(t,.)\|_{H^2(\B^3_{T-t})}
\\
\lesssim& (T-t)^{\frac{1}{2}}\|\Psi_{\theta_\infty}-\Psi(-\log(T-t)+\log (T-T_0),\frac{.}{T-t}))\|_{\mathcal{H}(\B^3_{T-t})}\\
\leq&\|\Psi_{\theta_\infty}-\Psi(-\log(T-t)+\log (T-T_0),.)\|\\
\leq&\|\Psi_{\theta_\infty}-\Psi_{\theta(-\log(T-t)+\log (T-T_0))}\|\\
&+|(\Phi(-\log(T-t)+\log (T-T_0))\| \\
\lesssim& (T-t)^{\omega_p},
\end{align*}
for all $t\in [T_0,T).$
As the other stated bounds follow analogously, the proof of Theorem \ref{maintheorem} is completed.
\end{proof}

\bibliography{references}
\bibliographystyle{plain}

\end{document}